\documentclass[11pt]{amsart}
\usepackage[left=2.5cm,right=2.5cm,top=3.5cm,bottom=3.5cm,a4paper]{geometry}
\usepackage{amsmath,amsthm,amsfonts,amssymb}
\usepackage{hyperref}
\usepackage[all]{xy}
\usepackage{graphicx}
\usepackage{amscd}
\usepackage{pb-diagram}
\usepackage{pstricks}
\usepackage[all]{xy}
\usepackage{yhmath}

\hypersetup{
    colorlinks=true,
    linkcolor=blue,
    citecolor=green,
    urlcolor=cyan
}

\numberwithin{equation}{section}

\newtheorem{theorem}{Theorem}[section]
\newtheorem{definition}[theorem]{Definition}

\newtheorem{lemma}[theorem]{Lemma}
\newtheorem{remark}[theorem]{Remark}
\newtheorem{prop}[theorem]{Proposition}

\newtheorem*{nonothm}{Theorem}

\renewenvironment{proof}[1][\proofname. ]{ { \noindent \it #1}}{\qed \\}

\newcommand{\AI}{A_\infty}
\newcommand{\CG}{\mathcal G}

\newcommand{\LL}{\mathbb{L}}
\newcommand{\RR}{\mathbb{R}}
\newcommand{\CC}{\mathbb{C}}
\newcommand{\ZZ}{\mathbb{Z}}

\newcommand{\PP}{\mathbb{P}}
\newcommand{\bk}{\boldsymbol{k}}

\newcommand{\CE}{\mathcal{E}}
\newcommand{\CF}{\mathcal{F}}

\newcommand{\CM}{\mathcal{M}}

\newcommand{\Hom}{{\rm Hom}}

\newcommand{\locmir}{\mathcal{L}\mathcal{M}}
\newcommand{\calC}{\mathcal{C}}
\newcommand{\CO}{\mathcal O}

\newcommand{\Ext}{{\rm Ext}}

\newcommand{\id}{{\rm id}}
\newcommand{\Proj}{{\rm Proj}}
\newcommand{\CS}{\mathcal{S}}
\newcommand{\ind}{{\rm ind}}
\newcommand{\calA}{{\mathcal{A}}}

\begin{document}
\title{Comparison of mirror functors of elliptic curves via LG/CY correspondence}
\author{Sangwook Lee}

\begin{abstract}
Polishchuk-Zaslow explained the homological mirror symmetry between Fukaya category of symplectic torus and the derived category of coherent sheaves of elliptic curves via Lagrangian torus fibration. Recently, Cho-Hong-Lau found another proof of homological mirror symmetry using localized mirror functor, whose target category is given by graded matrix factorizations. We find an explicit relation between these two approaches.
\end{abstract}
%

\address{Korea Institue for Advanced Study, Hoegiro 85, Dongdaemun-gu, Seoul 02455,
Korea}
\email{swlee@kias.re.kr}
\maketitle
\tableofcontents
\section{Introduction}
Homological Mirror Symmetry(HMS) conjecture by Kontsevich has been a powerful motivation in recent developments of geometry and physics. Inspired by string theory, Kontsevich conjectured the equivalence of the derived Fukaya category of a Calabi-Yau manifold $X$ and the derived category of coherent sheaves of the other Calabi-Yau manifold $\check{X}$, which is called the mirror of $X$.

 The elliptic curve case was studied by Polishchuk-Zaslow\cite{PZ}. Then Seidel\cite{Sei3} proved the conjecture for the quartic surface. Also Abouzaid-Smith\cite{AS} proved homological mirror symmetry for higher-dimensional(in particular 4-dimensional) tori. Many more important works has followed, but we will not mention them further.

On the other hand, inspired by the work of Seidel on genus two curve \cite{Sei2}, 
Cho-Hong-Lau\cite{CHL1} developed, so called {\em localized mirror functors} formalism, and applied it to the study of HMS for orbifold spheres. Their idea is to think of an immersed Lagrangian submanifold $\mathbb{L}$ in a symplectic orbifold, and consider the Maurer-Cartan solutions of its $\AI$-algebra whose weak bounding cochains are given by immersed sectors. The superpotential which given by the counting of decorated polygons is a (quasi)homogeneous polynomial $W$. 
 Then an explicit homological mirror functor is constructed by considering the (curved) Yoneda functor $\locmir^\LL(\cdot):=CF^*(\LL,\cdot)$, which gives an $\AI$-functor $Fu_0(X) \to MF(W)$. Here $Fu_0$ means the subcategory whose objects are unobstructed Lagrangians. In this correspondence the Floer complex $CF^*(\LL,L)$ for an unobstructed Lagrangian $L$, directly gives a matrix factorization of $W$. Taking twisted complexes and cohomologies on both sides, we get an exact functor.

From now on we concentrate on the HMS of elliptic curves.
Categorical mirror symmetry of Polishchuk-Zaslow(\cite{PZ}) compares the derived category of coherent sheaves of an elliptic curve $X$ and derived Fukaya category of a symplectic torus $T^2$. This foundational work gave a first non-trivial example of homological mirror symmetry. Roughly, they matched line bundles of degree $d$ on $X$ to the lines of slope $d$ in $X$ both of which may come with additional data (tensoring with higher dimensional bundles and flat connections on bundles respectively). Intersections between lines translates to theta functions and the Floer product corresponds to theta identities.

In Cho-Hong-Lau \cite{CHL1}, one first considers the the symplectic torus $T^2$ (with $\ZZ/3$-symmetry) and its $\ZZ/3$-quotient orbifold $\PP^1_{3,3,3}$. The immersed Lagrangian in this orbifold (called Seidel Lagrangian) defines the Landau-Ginzburg mirror $(\Lambda^3,W)$ with an $\AI$-functor from Fukaya category of $\PP^1_{3,3,3}$ to the dg-category of matrix
factorizations. To recover the mirror symmetry of the original symplectic torus $T^2$, one need to take the $\ZZ$-graded version of this functor from graded Fukaya category of $T^2$  to the graded matrix factorization category of $MF_{\ZZ}(W)$ (\cite{CHL2})


Hence, we have two different kinds of homological mirror symmetries of elliptic curves. They have $B$-model categories as a derived category and a category of matrix factorizations, respectively. 
Indeed, by Orlov's theorem\cite{Or}, these two categories are equivalent. Namely, if $W$ is a (quasi-)homogeneous polynomial which defines a smooth projective CY hypersurface $X$, then $D^b Coh(X) \simeq HMF_\ZZ(W)$. This equivalence is called a {\em Landau-Ginzburg/Calabi-Yau}(LG/CY for short) {\em correspondence}.

So far we have different exact functors between triangulated categories, and we can ask how they are related to each other. We find an explicit relation as follows.





\begin{nonothm}
We have a commutative diagram of exact functors
\[\xymatrix{
D^\pi Fu(T^2) \ar[r]^{\CS_i} \ar[d]^\Phi & D^\pi Fu(T^2)\ar[d]^{\locmir^\LL} \\
D^b Coh(X) \ar[r]^{\CG_i} & HMF_\ZZ(W).}
\]
where $\Phi$ is the mirror functor of \cite{PZ,AS} and 
\begin{equation}\label{si}\CS_i=[-j]\circ \tau^d\circ t_{(0,1/2)}\circ \left(\begin{array}{cc}1 & 0 \\ -3i+2 & 1\end{array}\right)\end{equation}
where $j=\lfloor -\frac{i}{3} \rfloor$, $d=-i-3j$, $\tau$ is the rotation by $-2\pi/3$ and $t_{(0,1/2)}$ is the parallel transport by $(0,1/2)$.
\end{nonothm}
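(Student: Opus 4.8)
The plan is to reduce the commutativity of the square to a comparison on a split-generating pair of objects of $D^\pi Fu(T^2)$ and then to identify the two composites explicitly on that pair.

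\emph{Reduction.} All four functors are exact, and by \cite{PZ,AS}, \cite{CHL1,CHL2} and \cite{Or} the functors $\Phi$, $\locmir^\LL$ and $\CG_i$ are equivalences; in particular $\Psi_i := (\locmir^\LL)^{-1}\circ\CG_i\circ\Phi$ is an autoequivalence of $D^\pi Fu(T^2)$ and the assertion is that $\Psi_i\cong\CS_i$. Now $D^\pi Fu(T^2)$ is the idempotent completion of the triangulated hull of a pair of transverse line Lagrangians $L_0,L_1$ — for instance a horizontal and a vertical circle, chosen so that $\Phi(L_0)=\CO_X$ and $\Phi(L_1)$ is a skyscraper (or a line bundle of degree one) — and an exact functor out of such a category is determined up to natural isomorphism by its restriction, as an $\AI$-functor, to the full subcategory $\langle L_0,L_1\rangle$. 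Hence it suffices to produce a natural isomorphism $\locmir^\LL\circ\CS_i \cong \CG_i\circ\Phi$ over $\langle L_0,L_1\rangle$: match the images of $L_0,L_1$, the morphism complexes $\Hom(L_a,L_b)$, and the composition maps. (One may also argue more cheaply from the known structure of $\mathrm{Auteq}$ of the derived category of this elliptic curve — generated by shifts, translations, the $SL_2(\ZZ)$ of Fourier--Mukai transforms/Dehn twists, and the order-three automorphism $\tau$ — that $\Psi_i$ must have the shape displayed in \eqref{si}; the constants $j=\lfloor -i/3\rfloor$ and $d=-i-3j$ are then forced by the action of $\Psi_i$ on $\ZZ$-gradings and on homology classes of Lagrangians.)

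\emph{The two composites on generators.} On the $B$-side, \cite{Or} realizes $\CG_i$ through the graded singularity category of $\CC[x,y,z]/(W)$, and its value on a line bundle $\CO_X(n)$ is an explicit graded matrix factorization of the cubic $W$, obtained by truncating a minimal graded free resolution of the twisted coordinate module and reading off its two-periodic tail, the integer $i$ recording which member of the $\ZZ$-family of Orlov equivalences is used (adjacent ones differing by $\otimes\CO_X(1)$ together with a shift). I would compute $\CG_i\Phi(L_a)$, its $\Hom$-complexes and its products this way; the products are governed by multiplication of global sections in the homogeneous coordinate ring of $(X,\CO_X(1))$. On the $A$-side, \cite{CHL1,CHL2} give $\locmir^\LL(L)=CF^*(\LL,L)$ with its induced matrix factorization, which for a line Lagrangian is a two-periodic complex whose entries are theta series counting holomorphic triangles bounded by the Seidel Lagrangian $\LL$ and $L$. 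I would apply $\CS_i$ of \eqref{si} to $L_0,L_1$ — the shear $\left(\begin{smallmatrix}1&0\\-3i+2&1\end{smallmatrix}\right)$ turns the horizontal slope into slope $-3i+2$ and fixes the vertical direction, $t_{(0,1/2)}$ translates, $\tau^d$ rotates by $-2\pi d/3$, and $[-j]$ shifts the grading — obtaining explicit graded lines $\CS_i(L_a)$, and then compute $CF^*(\LL,\CS_i(L_a))$ together with its $\Hom$-algebra. The agreement with the $B$-side is then precisely the statement that the theta identities underlying the Floer products of \cite{CHL2} are the same ones governing multiplication in the coordinate ring appearing in Orlov's construction: the translation $t_{(0,1/2)}$ and the rotation $\tau^d$ are exactly what is needed to align the two conventions for indexing theta functions, and $[-j]$ aligns the two $\ZZ$-gradings.

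\emph{Main obstacle.} The hard step is making $\CG_i$ concrete enough on individual line bundles (and on their morphisms) to compare term by term with the Floer matrix factorizations, and, within that, getting the grading and shift bookkeeping to reproduce exactly $j=\lfloor -i/3\rfloor$ and $d=-i-3j$ — equivalently, tracking how the Serre twist $\otimes\CO_X(1)$ on the Calabi--Yau side becomes an honest grading shift on the Landau--Ginzburg side only up to the order-three monodromy $\tau$. A secondary point is promoting the object-level identifications to a natural isomorphism of $\AI$-functors, rather than a mere bijection on objects; this is controlled by the self-$\Hom$ algebras of $L_0$ and $L_1$, which on both sides are computed by the same theta relations, so once the objects and the generating morphisms are matched compatibly the higher structure follows.
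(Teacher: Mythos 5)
Your overall strategy coincides with the paper's: restrict to a split-generating pair of Lagrangians, compute both composites explicitly as graded matrix factorizations of $W$ (Orlov's singularity-category construction on the $B$-side, Floer strip counts against the Seidel Lagrangian on the $A$-side), match objects and generating morphisms, and then invoke the rigidity of the $\AI$-structure to extend. However, there is a concrete gap in your $B$-side recipe, and it sits exactly where the paper locates the hardest step. You propose to compute $\CG_i\Phi(L_a)$ by ``truncating a minimal graded free resolution of the twisted coordinate module.'' This works for $\CO(1)$, where $\RR\omega_0(\CO(1))\simeq A(1)_{\geq 0}$ is concentrated in degree $0$, but it fails for $\CO$ itself: since $H^1(X,\CO)\cong\Lambda$, the complex $\RR\omega_0(\CO)$ is \emph{not} quasi-isomorphic to the module $A$, and $A$ is free over itself, so resolving the coordinate module would output the zero matrix factorization. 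The correct object is the cocone of a nonzero map $\bk[-1]\to A[1]$, and one must use the Gorenstein property $\RR\Hom_A(\bk,A)\cong\bk[-2]$ to kill the relevant $\Hom$-groups and verify that this cocone actually lies in the semiorthogonal component $\mathcal{T}_0\subset D^b({\rm gr\text{-}}A_{\geq 0})$, hence really is $\RR\omega_0(\CO)$; only then does its (rank-four, Koszul-type) resolution give the matrix factorization to be compared with $CF(\LL,L)$. Without this step the two sides cannot be matched on the structure sheaf, i.e.\ on one of your two generators.

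Two secondary points. First, your suggested generators (a vertical circle mapping to a skyscraper, or a degree-one bundle) are not the ones that make the $A$-side computable: the paper takes $L_{(1,0)}$ and $L_{(1,-3)}$ (mirror to $\CO$ and $\CO(1)$, the degree-three hyperplane bundle) precisely because $\CS_0$ carries them onto the branches $L$ and $\tau(L)$ of the Seidel Lagrangian $\LL$, so that $CF(\LL,\CS_0(L_a))$ is read off from the self-intersections of $\LL$ and the morphism matrices reduce to area-zero strips with constant entries. Second, your claim that ``the higher structure follows'' once objects and generating morphisms match needs justification: the paper supplies it by combining the C\u{a}ld\u{a}raru--Tu dg-lift of Orlov's equivalence with the Abouzaid--Smith/Polishchuk argument that the $\AI$-structure on the generating subcategory is determined by its cohomology category together with non-formality. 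A useful simplification you did not mention, and which the paper exploits throughout, is that a dimension count ($\dim\Hom=3$ on both the sheaf and module sides) shows a morphism of these matrix factorizations is already determined by its constant entries, so only finitely many strips need to be counted.
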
 
\begin{remark}
The definition of $\locmir^\LL$ involves a choice of a character $\gamma:\ZZ/3 \to U(1)$. Varying the choice of $\gamma$, the functors $\CS_i$ may also vary. Here we have fixed one choice.
\end{remark}
Namely, two homological mirror symmetry are equivalent after certain geometric transformation $S_i$ (rotation and translation) and shifts.

We remark that in \cite{CHL2}, non-commutative homological mirror symmetry of elliptic curve has been discussed (whose mirror is given by non-commutative Landau-Ginzburg model, which is a choice of central element $W$ in Sklyanin algebra). 
The relation between commutative and non-commutative mirror functors is not known, and we hope to apply the method of this paper to compare commutative and non-commutative mirror functors in the future. 

Let us comment on the proof of the theorem.
Recall that Orlov's argument is based on the fact that $D^b Coh(X)$ and $HMF_\ZZ(W)$ are Verdier quotients of $D^b({\rm gr-}A)$. Instead of considering quotients of $D^b({\rm gr-}A)$ itself, consider quotients of its subcategory as 
\[\pi_i: D^b({\rm gr-}A_{\geq i}) \hookrightarrow D^b({\rm gr-}A) \to D^b({\rm gr-}A)/D^b({\rm tors-}A)\simeq D^b Coh(X),\]
\[q_i: D^b({\rm gr-}A_{\geq i}) \hookrightarrow D^b({\rm gr-}A) \to D^b({\rm gr-}A)/{\rm Perf-}A\simeq HMF_\ZZ(W).\]
Then Orlov constructed adjoint functors of above ones:
\[\RR\omega_i: D^b Coh(X) \to D^b({\rm gr-}A_{\geq i}),\]
\[\nu_i: HMF_\ZZ(W) \to D^b({\rm gr-}A_{\geq i}).\]
Then he proves that $\pi_i \circ \nu_i : HMF_\ZZ(W) \stackrel{\sim}{\longrightarrow} D^b Coh(X)$ and $\CG_i:=q_i \circ \RR\omega_i:D^bCoh(X) \stackrel{\sim}{\longrightarrow} HMF_\ZZ(W)$ if $W$ defines a CY variety.


The most nontrivial part for the proof is to compute images of $\RR\omega_i$ which is a right derived functor. It is not enough to know only cohomologies of the images, but we need to know them as genuine $\RR\Hom$-complexes precisely, because we need to compare morphisms between them, not just objects themselves. The scheme for the computation is to use the Gorenstein property of the ring $R/W$, because it can be used to make many terms in the cohomology long exact sequence vanish, so that the object which we suspect to be an image of $\CG_i$ is in the subcategory which is a component of the semiorthogonal decomposition, hence it is indeed an image of $\CG_i$. We also remark that when we compare morphisms of matrix factorizations we do not have to compute all entries, but it is sufficient to compare constant entries which in fact determine the morphism completely. This observation considerably reduces the counting of holomorphic strips.


The organization of the paper is as follows. In Section \ref{sec:Fuk} we recall basic ingredients of Fukaya categories. In Section \ref{sec:mf} we introduce the notion of graded matrix factorizations and relate them with a quotient of a derived category. Then we relate derived categories and matrix factorizations by recalling Orlov's LG/CY correspondence. In following two sections we introduce two different kinds of mirror symmetry of elliptic curves. Finally in Section \ref{sec:mainthm} we prove our main theorem.

\bigskip

{\bf Acknowledgements.} The author thanks Cheol-hyun Cho for the encouragement and a lot of helpful suggestions. He also thanks Hansol Hong and Siu-Cheong Lau for generously sharing their ideas and results. He is grateful to Yong-Geun Oh for his interests in this problem and a number of useful comments. He thanks Dohyeong Kim and Dong Uk Lee for letting him to care about crucial issues about elliptic curves. He is grateful to the Center for Geometry and Physics(IBS) for its hospitality and support when he worked on this paper as a postdoctoral research fellow of the center. This work was supported by IBS-R003-D1.

\section{Fukaya categories}\label{sec:Fuk}
We recall the definitions and relevant theorems of $\AI$-categories and Lagrangian Floer theory mainly to set the notations (we refer readers to \cite{FOOO}, \cite{Aur} for example).
\subsection{Filtered $\AI$-categories}
\begin{definition}
The {\em Novikov field} is $\displaystyle\Lambda:=\Big\{\sum_{i \geq 0} a_i T^{\lambda_i} \mid a_i \in \CC, \lambda_i \in \RR, \lambda_i \to \infty \;\mathrm{as} \; i \to \infty \Big\}.$

\end{definition}

A filtration $F^\bullet\Lambda$ of $\Lambda$ is given by \[F^\lambda \Lambda:=\Big\{\sum_{i\geq 0}a_i T^{\lambda_i} \mid \lambda_i \geq \lambda {\rm \;for\; all\; }i \Big\} \subset \Lambda, F^+\Lambda:=\Big\{\sum_{i\geq 0}a_i T^{\lambda_i} \mid \lambda_i > 0 {\rm \;for\; all\; }i \Big\}.\]
The {\em Novikov ring} $\Lambda_0$ is defined as $\Lambda_0:= F^0 \Lambda$.

\begin{definition}
A {\em filtered $\AI$-category} $\calC$ over $\Lambda$ consists of a class of objects $Ob(\calC)$ and the set of morphisms $hom_{\calC}(A,B)$ for a pair of objects $A,B$ of $\calC$ with the following conditions:
\begin{enumerate}
 \item $hom(A,B)$ is a filtered $\ZZ$-graded $\Lambda$-vector space for any $A,B\in Ob(\calC)$,
 \item for $k \geq 0$ there are multilinear maps of degree 1
  \[m_k: hom(A_0,A_1)[1] \otimes hom(A_1,A_2)[1] \otimes \cdots \otimes hom(A_{k-1},A_k)[1] \to hom(A_0,A_k)[1]\]
  such that they preserve the filtration and satisfy the {\em $\AI$-relation}
  \[\sum_{k_1+k_2=n+1}\sum_{i=1}^{k_1}(-1)^\epsilon m_{k_1}(x_1,...,x_{i-1},m_{k_2}(x_i,...,x_{i+k_2-1}),x_{i+k_2},...,x_n)=0\] where $\epsilon=\sum_{j=1}^{i-1}(|x_j|+1).$ 
   
\end{enumerate}
 Here, $m_0$ means that for each object $A$ we have $m_0^A \in hom^1(A,A)[1]=hom^2(A,A).$
If $m_0 \neq 0$, $\calC$ is called a {\em curved} $\AI$-category. Otherwise, $\calC$ is called {\em strict.}
If there is only one object, then $\calC$ is called an {\em $\AI$-algebra.} If only $m_1$ and $m_2$ are nonzero, then $\calC$ is called a {\em dg category.}

\end{definition}
In this paper, every $\AI$-category is filtered over $\Lambda$. $\AI$-categories are generalizations of dg categories where composition of morphisms may be associative only up to homotopy.

To understand the meaning of the $\AI$-relation with possibly nonzero $m_0$, we write down the relation for the simplest case. For $x \in hom(A,B)$, 
\begin{equation}\label{eq:m1}m_1^2(x)+m_2(m_0^A,x)+(-1)^{|x|+1}m_2(x,m_0^B)=0.\end{equation}
Hence if $m_0 \neq 0$, $m_1$ may not be a differential (i.e. $m_1^2=0$).
\begin{definition}
For an object $A$ in an $\AI$-category, $e_A \in hom(A,A)$ is called a {\em unit} if it satisfies
 \begin{enumerate}
  \item $m_2(e_A,x)=x,\; m_2(y,e_A)=(-1)^{|y|}y$ for any $x \in hom(A,B)$, $y \in hom(B,A)$,
  \item $m_{k+1}(x_1,...,e_A,...,x_k)=0$ for any $k \neq 1$.
 \end{enumerate}
\end{definition}
we recall the deformation theory of $\AI$-category. 
\begin{definition}
An element $b \in F^+hom^1(A,A)$ is called a {\em weak bounding cochain} of $A$ if it is a solution of the weak Maurer-Cartan equation
\begin{equation}\label{eq:weakMC}
m(e^b):=m_0^A+m_1(b)+m_2(b,b)+ \cdots = PO(A,b)\cdot e_A\end{equation}
for some $PO(A,b) \in \Lambda.$ If such a solution exists, then $A$ is called {\em weakly unobstructed}. 
 If there exists a solution $b$ such that $PO(A,b)=0$, then $b$ is called a {\em bounding cochain} and $A$ is called {\em unobstructed}. $PO(A,b)$ is called the {\em Landau-Ginzburg superpotential} of $b$. \end{definition}
We denote $\mathcal{M}_{weak}(A)$ be the set of weak bounding cochains of $A$. Then $PO(A,\cdot)$ is a function on $\mathcal{M}_{weak}(A)$. We also define 
\[\CM_{weak}^\lambda(A):=\{b\in \CM_{weak}(A) \mid PO(A,b)=\lambda\}.\]

Following Proposition 1.20 of \cite{Fuk}, given an $\AI$-category $\calC$, under the assumption $\CM^\lambda_{weak}(A)$ is nonempty for some objects, we define a new $\AI$-category $\calC_\lambda$ as
\[Ob(\calC_\lambda)=\bigcup_{A \in Ob(\calC)}\{A\} \times \CM_{weak}^\lambda(A),\]
\[hom_{\calC_\lambda}((A_1,b_1),(A_2,b_2))=hom_{\calC}(A_1,A_2)\]
with the following $\AI$-structure maps
\[m_k^{b_0,...,b_k}:hom_{\calC_\lambda}((A_0,b_0),(A_1,b_1)) \otimes \cdots \otimes hom_{\calC_\lambda}((A_{k-1},b_{k-1}),((A_k,b_k)) \to hom_{\calC_\lambda}((A_0,b_0),(A_k,b_k)),\]
\[m_k^{b_0,...,b_k}(x_1,...,x_k):=\sum_{l_0,...,l_k}m_{k+l_0+\cdots+l_k}(b_0^{l_0},x_1,b_1^{l_1},...,b_{k-1}^{l_{k-1}},x_k,b_k^{l_k})\]
where $x_i \in hom_{\calC_\lambda}((A_i,b_i),(A_{i+1},b_{i+1})).$ $\AI$-relation is induced by the weak Maurer-Cartan equation (\ref{eq:weakMC}).

\begin{theorem}
Let $(A_0,b_0),(A_1,b_1) \in Ob(\calC_\lambda)$. Then $(m_1^{b_0,b_1})^2=0.$
\end{theorem}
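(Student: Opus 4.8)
The plan is to deduce the statement from the curved $\AI$-relation \eqref{eq:m1}, applied inside $\calC_\lambda$, exploiting the fact that the curvature of every object of $\calC_\lambda$ is the \emph{same} scalar multiple $\lambda$ of its unit. First I would record the relevant structure of $\calC_\lambda$: by construction $m_0^{(A,b)} = m(e^b) = PO(A,b)\,e_A = \lambda\,e_A$ for every object $(A,b)\in Ob(\calC_\lambda)$, and the reorganization of the $\AI$-relations for $b$-decorated input strings (Proposition 1.20 of \cite{Fuk}) shows that the operations $m_k^{b_0,\dots,b_k}$ satisfy the $\AI$-relations with this curvature. The one bookkeeping point here is that a maximal block of $b_i$'s on which an inner $m$ is evaluated contributes, after summing over the block length, exactly $m(e^{b_i}) = \lambda e_{A_i}$ by the weak Maurer--Cartan equation \eqref{eq:weakMC}, while the blocks straddling an actual input $x_j$ reassemble the outer operations $m^{\vec b}$.

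Next I would verify that each $e_A$ remains a strict unit for $\calC_\lambda$. Unwinding the definition, $m_2^{b_0,b_0,b_1}(e_{A_0},x)=\sum_{l_0,l_0',l_1} m_{2+l_0+l_0'+l_1}(b_0^{l_0}, e_{A_0}, b_0^{l_0'}, x, b_1^{l_1})$, and by the unit axiom for $\calC$ every term with $l_0+l_0'+l_1>0$ vanishes, leaving $m_2(e_{A_0},x)=x$; similarly $m_2^{b_0,b_1,b_1}(x,e_{A_1})=(-1)^{|x|}x$, and $m_{k+1}^{\vec b}$ with an $e_A$ inserted vanishes for $k\ne 1$ for the same reason. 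Then I would specialize \eqref{eq:m1} to the pair $((A_0,b_0),(A_1,b_1))$ in $\calC_\lambda$: for $x\in hom_{\calC_\lambda}((A_0,b_0),(A_1,b_1))$ one gets
\[(m_1^{b_0,b_1})^2(x) + m_2^{b_0,b_0,b_1}(\lambda e_{A_0},x) + (-1)^{|x|+1} m_2^{b_0,b_1,b_1}(x,\lambda e_{A_1}) = 0,\]
and the unit computation turns the last two terms into $\lambda x + (-1)^{|x|+1}(-1)^{|x|}\lambda x = \lambda x - \lambda x = 0$, so $(m_1^{b_0,b_1})^2=0$.

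The conceptual crux is that both objects lie in the \emph{same} stratum $\CM_{weak}^\lambda$, so $PO(A_0,b_0)=PO(A_1,b_1)=\lambda$ and the two curvature correction terms cancel exactly; had the potentials differed one would only obtain $(m_1^{b_0,b_1})^2 = (PO(A_1,b_1)-PO(A_0,b_0))\cdot\id$. Accordingly, the only part of the argument that is not purely formal is the first step --- justifying that the $b$-decorated operations genuinely satisfy the curved $\AI$-relations with curvature $\lambda e$ --- which is the content of the cited proposition and is a matter of careful index bookkeeping rather than a real obstacle.
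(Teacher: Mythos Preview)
Your proof is correct and follows essentially the same route as the paper: both specialize the $n=1$ $\AI$-relation in the deformed category, substitute $m(e^{b_i})=\lambda e_{A_i}$, and use the unit axiom to cancel the two curvature terms. The only difference is cosmetic: you write the curvature terms with the deformed $m_2^{b_0,b_0,b_1}$ and then explicitly reduce them to the undeformed $m_2$ via the unit axiom, whereas the paper writes the undeformed $m_2$ directly and invokes ``definition of units'' without spelling this out.
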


\begin{proof}
Let $x \in hom_{\calC_\lambda}((A_0,b_0),(A_1,b_1)).$ Then the $\AI$-equation is \[(m_1^{b_0,b_1})^2+m_2(m(e^{b_0}),x)+(-1)^{|x|+1}m_2(x,m(e^{b_1}))=0.\] By $m(e^{b_0})=\lambda\cdot e_{A_0},$ $m(e^{b_1})=\lambda \cdot e_{A_1}$ and by definition of units, \[m_2(m(e^{b_0}),x)+(-1)^{|x|+1}m_2(x,m(e^{b_1}))=0,\] so $(m_1^{b_0,b_1})^2=0.$
\end{proof} 
So, under the existence of the weak Maurer-Cartan solutions, we get strict $\AI$-categories by restricting to objects sharing certain value of the Landau-Ginzburg(LG for short) superpotential.

\begin{definition}
Let $\calC$ and $\calC'$ be $\AI$-categories. An {\em $\AI$-functor} between $\calC$ and $\calC'$ is a collection $\CF=\{\CF_i\}_{i \geq 0}$ consisting of

\begin{itemize}
 \item $\CF_0: Ob(\calC) \to Ob(\calC'),$
 \item $\CF_k: hom_{\calC}(A_0,A_1)[1] \otimes \cdots \otimes hom_{\calC}(A_{k-1},A_k)[1] \to hom_{\calC'}(\CF_0(A_0),\CF_0(A_k))[1]$ of degree 0
\end{itemize}
which are subject to the following condition:
\begin{eqnarray*}
& \sum\limits_{i,j} (-1)^{|x_1|'+\cdots+|x_{i-1}|'} \CF_{i-j+k}(x_1,...,x_{i-1},m^\calC_{j-i+1}(x_i,...,x_j),x_{j+1},...,x_k)\\
=& \sum\limits_l m^{\calC'}_{l+1}(\CF_{i_1-1}(x_1,...,x_{i_1}),\CF_{i_2-i_1}(x_{i_1+1},...,x_{i_2}),...,\CF_{k-i_l}(x_{i_l+1},...,x_{k})).
\end{eqnarray*}

\end{definition}

\subsection{Triangulated $\AI$-categories}
By \cite{Sei1}, we know that any $\AI$-category $\calC$ admits a cohomologically fully faithful functor into another $\AI$-category which is called a {\em triangulated envelope} of $\calC$, in which we have exact triangles and shift functors. 
We take the most common construction of triangulated envelope given by so-called {\em twisted complexes.} Since we do not use non-trivial twisted complex in this paper, we omit its precise definition(and refer readers to \cite{Sei1}) and just give a short summary: given an $\AI$-category $\calC$ we add formal shifts and formal direct sums, and equip an object $\displaystyle E=\bigoplus_{i=1}^N E_i[k_i]$ with a strictly lower triangular map $\delta:E \to E$ such that $\displaystyle\sum_{k \geq 1} m_k(\delta,...,\delta)=0.$ Then the pair $(E,\delta)$ is called a {\em twisted complex}. Morphisms among them and $\AI$-structure maps are defined in the most canonical way, and denote the resulting $\AI$-category by $Tw(\calC).$

\subsection{Fukaya category on surfaces}\label{subsec:fukaya}
We will use a version of Fukaya category of surface $M$ described in \cite{Sei2}
with a different set of conventions(as used in \cite{CHL1}). We recall relevant ingredients for readers convenience. Roughly, Fukaya category of a symplectic manifold $M$(denoted by $Fu(M)$) is an $\AI$-category whose objects are Lagrangian submanifolds with additional data and with morphisms given by Floer complexes. For simplicity, assume that $L$ and $L'$ are oriented spin Lagrangian submanifolds which intersect transversely. Then the {\em Floer complex} $CF(L,L')$ is a vector space over $\Lambda$ whose basis elements are intersections of $L$ and $L'$. 
Each intersection has an associated index (or parity in $\ZZ/2$-graded case) defined as follows. For $p \in L \cap L'$, 
choose a smooth path of oriented Lagrangian subspaces $\lambda_{p_0,p_1}(t)$ for $0 \leq t \leq 1$ at $p$, $\lambda_{p_0,p_1}(0)=T_p L$ and $\lambda_{p_0,p_1}(1)=T_p L'$. Then concatenate the positive definite path $\gamma$ from $T_p L'$ to $T_p L$, which does not depend on the orientation of Lagrangians. The homotopy class of the loop $\gamma \circ \lambda_{p_0,p_1}$ in Lagrangian Grassmannian from $T_p L$ to itself gives a winding number, which is called the {\em degree of $\lambda_{p_0,p_1}$}.
Here, a positive definite path from $T_p L_0$ to $T_p L_1$ is defined by identifying $L_0 \cong \RR^n$ and $L_1 \cong i\RR^n$ at $p=(0,0,...,0)$ and taking the path $exp(\pi i t) \cdot \RR^n$ for $0 \leq t \leq 1/2$(see RHS of Figure 2 for 1-dimensional case of the positive path).

The parity of the degree does not depend on the choice of the path $\lambda_{p_0,p_1}(t)$, and also if we consider a Calabi-Yau manifold $M$ and its graded Lagrangians, then there is a canonical Lagrangian path between $T_p L$ and $T_p L'$, in the sense that the path should preserve phases. Recall that an oriented Lagrangian submanifold in a CY manifold $(M,\omega,\Omega)$ is {\em graded} if there is a function $\theta_L:L \to \RR$ such that \[\displaystyle\frac{\Omega(X_1(p)\wedge \cdots \wedge X_n(p))}{|\Omega(X_1(p) \wedge \cdots \wedge X_n(p))|}=e^{i\theta_L(p)}\] for any positively oriented wedge of vector fields $X_1 \wedge \cdots \wedge X_n$ of $L$. $\theta_L$ is called the {\em phase function} of $L$. If $\theta_L$ is constant, then $L$ is called {\em special Lagrangian}.
Hence in the graded case the degree of each intersection point is well-defined in $\ZZ$. 

\begin{figure}
\includegraphics[height=2in]{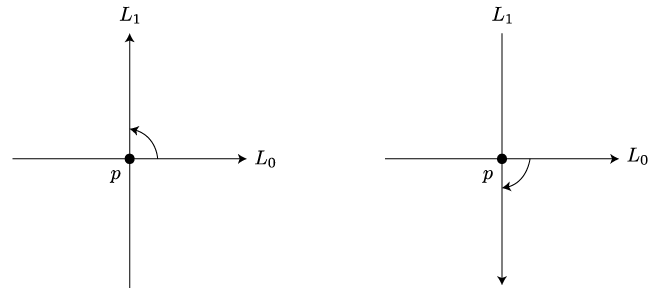}
\caption{The left picture is a path from $T_p L_0$ with phase $0$ to $T_p L_1$ with phase $\frac{\pi}{2}.$ In this case $\deg (p)=1.$ The right one is a path from $T_p L_0$ with phase $0$ to $T_p L_1$ with phase $-\frac{\pi}{2}$, and $\deg(p)=0.$}
\end{figure}

Floer differential $m_1: CF(L,L') \to CF(L,L')$ is defined as 
\[m_1(p):=\sum_{\stackrel{q\in L\cap L'}{\ind([u])=1}}\#(\CM(p,q;[u]))T^{\omega(u)}q\]
where $u$ is a $J$-holomorphic strip $u: \RR \times [0,1]\to M$ with
\[u(s,1) \in L, \; u(s,0) \in L',  \lim_{s \to -\infty}u(s,t)=p,\; \lim_{s \to \infty}u(s,t)=q.\]
And $\#$ is a signed count and $\omega(u)$ is the symplectic area of $u$. 
The {\em index} of the strip $u$ is defined by the Maslov index. 
Higher $\AI$-operations on morphisms $m_k: CF(L_0,L_1) \otimes \cdots \otimes CF(L_{k-1},L_k) \to CF(L_0,L_k)$ is defined by counting $J$-holomorphic polygons.

Let $p_i \in CF(L_{i-1},L_i)$ and $q \in CF(L_0,L_k).$ We define a moduli space $\CM(p_1,...,p_k;q)$ of $J$-holomorphic polygons whose domains are $D^2$ minus $k+1$ boundary points cyclically ordered by $z_1,...,z_k,z_0$, arcs between $p_i$ and $p_{i+1}$ are mapped inside $L_i$(and inside $L_k$ between $p_k$ and $q$), and the images near those punctures are asymptotically $p_1,...,p_k,q$, respectively. Let $\CM(p_1,...,p_k;q;\beta)$ be a subset of $\CM(p_1,...,p_k;q)$ which consists of holomorphic polygons of homotopy class $\beta$. Then the dimension of the moduli space is given by
\[\dim \CM(p_1,...,p_k;q;\beta)=k-2+\ind(\beta).\]
The index of $u \in \CM(p_1,...p_k;q)$ is also given by the Maslov index. Fix a trivialization of $u^*TM$ so that we get paths of Lagrangian subspaces $l_0,l_1,...,l_k$ on $L_0,L_1,...,L_k$ respectively. Then we start from $T_{p_1}L_0$, at corners $p_i$ concatenate negative definite paths, move along $l_i$ until we arrive at $T_q L_k.$ At $q$ concatenate the positive definite path and move along $l_0$ to arrive at $T_{p_1} L_0$ again. The index of $u$ is defined by the Maslov index of the loop described above, and it depends only on the homotopy class of $u$. Now we define \[m_k: CF(L_0,L_1) \otimes \cdots \otimes CF(L_{k-1},L_k) \to CF(L_0,L_k)\] by
\[m_k(p_1,...,p_k):=\sum_{\stackrel{p_i\in L_{i-1}\cap L_i,q\in L_0\cap L_k}{\ind[u]=2-k}}\# (\CM(p_1,...,p_k;q;[u]))T^{\omega(u)}q.\]

\begin{figure}
\includegraphics[height=2in]{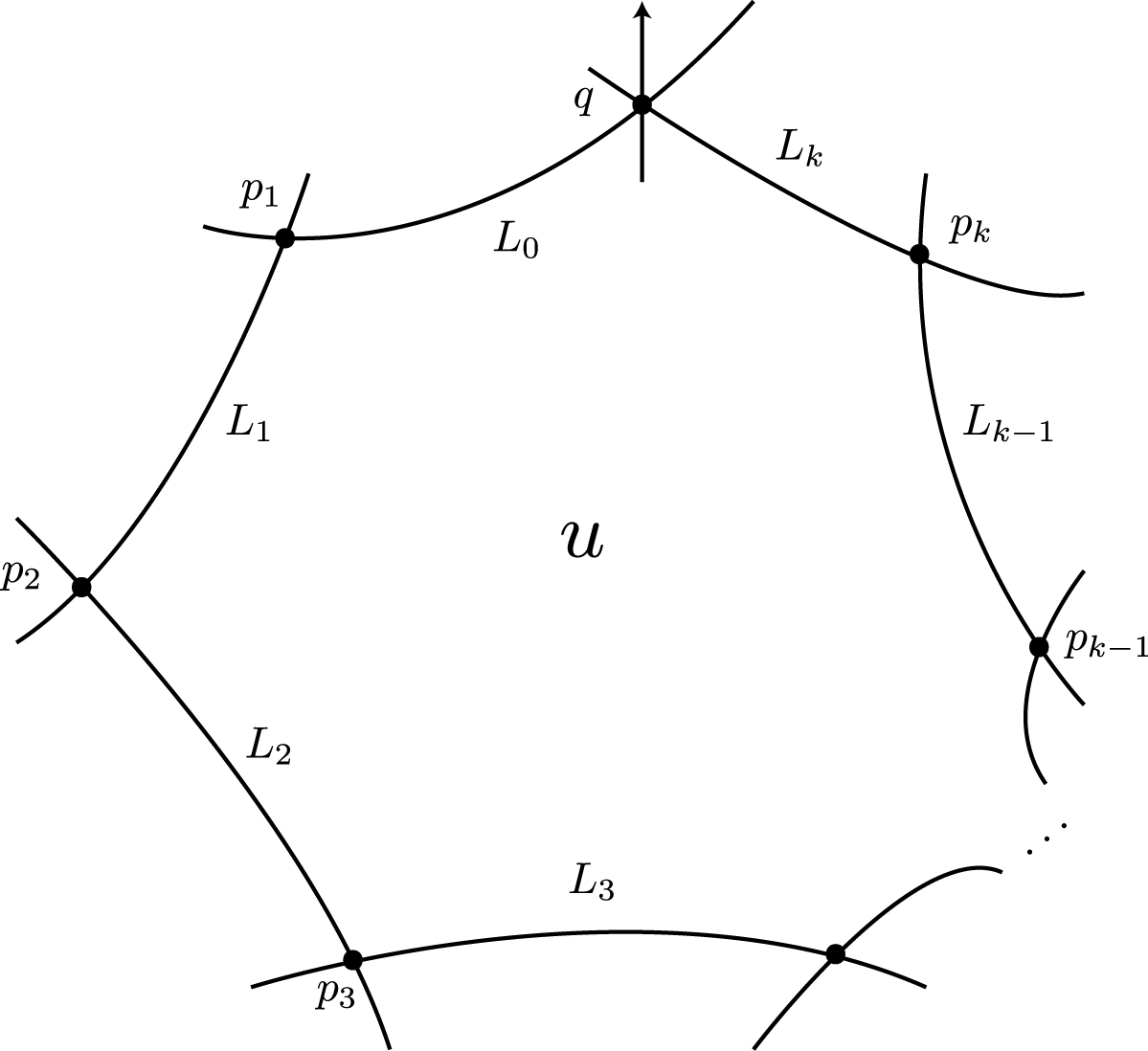}
\caption{Holomorphic polygon $u$ contributing to $m_k(p_1,...,p_k)$.}
\end{figure}

Recall that in graded case we can define degrees of Lagrangian intersections in $\ZZ$. Then if a holomorphic polygon $u$ has corners $p_1\in L_0 \cap L_1,...,p_k\in L_{k-1}\cap L_k,q\in L_0 \cap L_k$, then \[\ind(u)=\deg(q)-\deg(p_1)-\cdots-\deg(p_k)\] where $q \in CF(L_0,L_k)$.

Now let us consider the case of surfaces. The precise construction of Fukaya category is more involved since one has to deal with non-transverse Hom spaces $CF(L,L)$. In \cite{Sei2}, a Morse function on $S^1$ has been chosen so that the Hom space is generated by critical points. We refer readers to \cite{Sei2} for further discussions.
Let us recall the definition of orientation for the counting of polygons from \cite{Sei2}. Let $u \in \CM(p_1,...,p_k;q)$ whose boundary lies on Lagrangian submanifolds as above. The sign of $u$ is determined by the following steps.
\begin{itemize}
 \item If a Lagrangian is equipped with a nontrivial spin structure, put a point $\circ$ on it, on which the nontrivial spin bundle is twisted.
 \item Disagreement of the orientation of $\partial u$ on $L_0$ is irrelevant.
 \item If the orientation of $\partial u$ on $\wideparen{p_i p_{i+1}}$ does not agree with $L_i$, the sign is affected by $(-1)^{|p_i|}.$
 \item If the orientation of $\partial u$ on $\wideparen{p_k q}$ does not agree with $L_k$, the sign is affected by $(-1)^{|p_k|+|q|}.$ 
 \item Mutiply $(-1)^l$ when $\partial u$ passes through nontrivial spin points $\circ$ $l$ times.
\end{itemize}
The structure maps $\{m_k\}_{k \geq 0}$ define an $\AI$-structure, and the resulting $\AI$-category is called the {\em Fukaya category} of $M$ and written as $Fu(M)$. In general Fukaya category may be obstructed, i.e. $m_0$ is not zero, so $CF(L,L')$ might not be a chain complex. But if we form an $\AI$-subcategory $Fuk_\lambda(M)$ of weakly unobstructed objects equipped with weak bounding cochains whose LG superpotentials have same value $\lambda$, then $m_1$ on $Fuk_\lambda(M)$ is a differential, and if $(L,b),(L',b')\in Fuk_\lambda(M)$, the cohomology of $(CF((L,b),(L',b')),m_1)$ is called the {\em Floer cohomology} of the pair $((L,b),(L',b')),$ denoted by $HF((L,b),(L',b')).$ In particular, $Fu_0(M)$ is an $\AI$-subcategory of $Fu(M)$ of unobstructed objects. We remark another important fact that $CF(L,L')$ is homotopy equivalent to $CF(L,\phi(L'))$ if $\phi$ is a Hamiltonian diffeomorphism. In particular, if $L=L'$, then $CF^*(L,L) \cong CF^*(L,\phi(L))$ for any Hamiltonian diffeomorphism $\phi$, and $HF^*(L,L) \cong H^*(L,\Lambda)$. By definition, weak bounding cochains of $L$ are in $CF^1(L,L).$

\subsection{Derived Fukaya categories}
Since $Fu(M)$ is an $\AI$-category, we also have its triangulated envelope $Tw(Fu(M))$ by adding twisted complexes of Lagrangians, and taking its cohomology category, we get the {\em derived} Fukaya category $DFu(M).$ Taking split-closure, we get the split-closed derived Fukaya category $D^\pi Fu(M).$ 

We will be mainly concerned with direct sums of Lagrangian submanifolds with new kinds of bounding cochains which occur by intersections between direct summands. First we clarify the meaning of direct sums and shifts in Fukaya categories. A direct sum of Lagrangian submanifolds is just the union of them. It can be also considered to be an immersed Lagrangian. Given an object $A$ in a triangulated $\AI$-category, $A[1]$ is featured by the property $hom^i(A[1],B)\cong hom^{i-1}(A,B)$, $hom^i(B,A[1])\cong hom^{i+1}(B,A)$. Hence, by definition of the degree of morphisms(or intersections) between Lagrangian submanifolds, in non-graded case $[1]$ is just reversing of the orientation. In 1-dimensional graded case in which we are interested, it corresponds to the change of phase by $-\pi$.

If $L=L_1 \oplus \cdots \oplus L_n$, then $\displaystyle CF^*(L,L) \cong \bigoplus_{1\leq i,j \leq n}CF^*(L_i,L_j).$ Then there is another kind of degree 1 cochains which are given by $CF^1(L_i,L_j)$ with $i\neq j$, in addition to those of degree 1 cochains of a single embedded Lagrangian submanifold. In particular if $L$ is 1-dimensional and all $L_i$ are transverse to each other without triple(or more multiple) intersections, then some intersections among them are degree one cochains, and furthermore they can contribute to be a part of weak bounding cochains. We will encounter such an example later, namely (lifts of) Seidel Lagrangian on $T^2$.

\section{Graded matrix factorizations}\label{sec:mf}
Let $R=\Lambda[x_0,...,x_n]$ be a graded ring with $\deg(x_i)=d_i$.
\begin{definition}
Let $W \in R$ be a (quasi)homogeneous polynomial of degree $d$. $MF_\ZZ(W)$ is a dg category whose object $(P,d_P)$ is represented as a pair of graded morphisms $p_0: P_0 \to P_1$ and $p_1: P_1 \to P_0(d)$, where $P_0$ and $P_1$ are graded free $R$-modules and
\[p_0(d) \circ p_1=W \cdot \id : P_1 \to P_1(d),\]
\[p_1 \circ p_0 = W \cdot \id: P_0 \to P_0(d).\] 

Equivalently, an object described above is also expressed as a quasi-periodic infinite sequence
\[K^\cdot: \xymatrix{ \cdots \ar[r] & K^i \ar[r]^{k^i} & K^{i+1} \ar[r]^{k^{i+1}}& K^{i+2} \ar[r] & \cdots}\]
where $K^{2i+1}=P_1(i\cdot d)$, $K^{2i}=P_0(i\cdot d),k^{2i}=p_0(i\cdot d), k^{2i+1}=p_1(i\cdot d).$ Then 
\[hom^j_{MF_\ZZ(W)}(K^\cdot,L^\cdot):=\Big\{f^\cdot: K^\cdot \to L^{\cdot+j},{\rm \; graded}\mid f^{i+2}=f^{i}(d) \Big\}\]
and $d: hom^j_{MF_\ZZ(W)}(K^\cdot,L^\cdot) \to hom^{j+1}_{MF_\ZZ(W)}(K^\cdot,L^\cdot)$ is defined by
\[(d(f^\cdot))^i:=l^{i+j} \circ f^i + (-1)^j f^{i+1} \circ k^i\]
where $l^n:L^n \to L^{n+1}.$ Compositions are defined as usual.


\end{definition}

Observe that given a matrix factorization $K^\cdot$ it is natural to define the shift $K^\cdot[1]$ such that $K[1]^i=K^{i+1}$ and $k[1]^i=-k^{i+1}.$ It is clear that $K^\cdot[2]=K^\cdot(d).$

\begin{definition}
Given a dg category $\calC$, its {\em cohomology category} $H^0(\calC)$ is defined by the same objects as those of $\calC$, and morphism spaces as 0th cohomologies of $d:hom^j \to hom^{j+1}.$

\end{definition}

\begin{prop}
The cohomology category $H^0(MF_\ZZ(W))$ is a triangulated category with exact triangles 
\xymatrix{K^\cdot \ar[r]^f & L^\cdot \ar[r] &C^\cdot(f) \ar[r] & K^\cdot[1]} where the mapping cone of $f$ is defined as 
\[C^\cdot(f): \xymatrix{ \cdots \ar[r] & L^i \oplus K^{i+1} \ar[r]^{c^i} & L^{i+1}\oplus K^{i+2} \ar[r]^{c^{i+1}} & L^{i+2} \oplus K^{i+3} \ar[r] & \cdots}\] such that \[c^i=\left(\begin{array}{cc}l^i & f^{i+1} \\0 & -k^{i+1}\end{array}\right).\]

\end{prop}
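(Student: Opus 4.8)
The plan is to deduce the statement from the general principle that the cohomology (homotopy) category of a \emph{pretriangulated} dg category is triangulated, so that the only genuinely model-specific work is to check that $MF_\ZZ(W)$ is closed under the shift and mapping-cone operations exhibited in the statement. Concretely, I would first verify that for a closed degree-$0$ morphism $f$ the proposed cone $C^\cdot(f)$ is again an object of $MF_\ZZ(W)$, i.e. that $c^{i+1}\circ c^i = W\cdot\id$ on $L^i\oplus K^{i+1}$. Expanding the product of the two upper-triangular block matrices, the diagonal blocks give $l^{i+1}l^i = W\cdot\id$ and $(-k^{i+2})(-k^{i+1}) = k^{i+2}k^{i+1} = W\cdot\id$ directly from the matrix-factorization equations for $L^\cdot$ and $K^\cdot$, while the single off-diagonal block is $l^{i+1}f^{i+1} - f^{i+2}k^{i+1}$, which is exactly (up to the sign fixed by the conventions) the component $(df)^{i+1}$ of the differential and hence vanishes because $f$ is closed. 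The quasi-periodicity $f^{i+2} = f^i(d)$ then propagates to $c^{i+2} = c^i(d)$, so $C^\cdot(f)$ satisfies all the defining conditions of an object of $MF_\ZZ(W)$.

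Next I would record that the shift $K^\cdot[1]$ defined before the statement ($K[1]^i = K^{i+1}$, $k[1]^i = -k^{i+1}$) is again a matrix factorization and that $[1]$ descends to an autoequivalence of $H^0(MF_\ZZ(W))$, with $[2] = (d)$ giving the internal degree shift. I would then \emph{define} the distinguished triangles to be precisely those triangles in $H^0(MF_\ZZ(W))$ isomorphic to a standard one $K^\cdot \xrightarrow{f} L^\cdot \to C^\cdot(f) \to K^\cdot[1]$ coming from a closed degree-$0$ morphism $f$, where the maps $L^\cdot \to C^\cdot(f)$ and $C^\cdot(f)\to K^\cdot[1]$ are the evident inclusion and projection. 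The essential observation making the remaining verification routine is that objects of $MF_\ZZ(W)$ are nothing but $\ZZ$-graded quasi-periodic complexes over the additive category of graded free $R$-modules whose differential squares to $W\cdot\id$ instead of $0$; since $W\cdot\id$ is central and the cone, shift, and homotopy formulas are identical to those in the ordinary homotopy category $K(\calA)$ of complexes over an additive category $\calA$, every diagram-chase used to establish the axioms for $K(\calA)$ transports verbatim.

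Finally I would verify the axioms of a triangulated category. TR1 (existence of a triangle on every morphism, the trivial triangle on $\id$, and closure under isomorphism) is immediate from the cone construction of the first step and the definition of distinguished triangles. For TR2 (rotation) I would exhibit, exactly as in $K(\calA)$, an explicit homotopy equivalence between $C^\cdot(L^\cdot \to C^\cdot(f))$ and $K^\cdot[1]$ identifying the rotated triangle with the cone triangle of the next map; the homotopy is the standard one and its chain-homotopy identities only use that $f$ is closed. TR3 (filling in a morphism of triangles) follows by lifting to closed representatives and using the cone's mapping property up to homotopy. The main obstacle is TR4, the octahedral axiom: here I would produce the octahedron by the classical construction of compatible cones for $g\circ f$, $f$, and $g$ together with the connecting homotopy equivalence relating $C^\cdot(f)$, $C^\cdot(g\circ f)$, and $C^\cdot(g)$, checking that all the block-matrix maps respect the $W$-twisted differentials. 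Since nothing in that construction sees the value of $d_P^2$ beyond its centrality, the argument is formally the same as for complexes, and the bookkeeping of signs dictated by the conventions in the definition of $MF_\ZZ(W)$ is the only delicate point.
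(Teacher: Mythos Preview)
Your proposal is correct and follows precisely the approach the paper itself indicates: the paper's entire proof reads ``The proof is standard, as in the case of homotopy categories over abelian categories,'' and your outline is exactly an unpacking of that standard argument, checking that the shift and cone stay inside $MF_\ZZ(W)$ and then transporting the usual verification of TR1--TR4 from $K(\calA)$.
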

The proof is standard, as in the case of homotopy categories over abelian categories.

\begin{definition}
We write $HMF_\ZZ(W):=H^0(MF_\ZZ(W))$ and call it the {\em category of graded matrix factorizations} of $W$.
\end{definition}

\begin{remark}
The category $HMF_\ZZ(W-\lambda)$ is nontrivial(i.e. it contains nonzero objects) only when $\lambda$ is a critical value. Since we only deal with homogeneous polynomials, 0 is a critical value(a degree 1 polynomial does not admit any nontrivial matrix factorization), so the category we are interested in this paper is nontrivial.
\end{remark}

Let $A=R/W$. Since $W$ is homogeneous, $A$ is a graded ring. Then the category ${\rm gr-}A$ of finitely generated graded $A$-modules is an abelian category. We define ${\rm Perf-}A$ as the full subcategory of chain complexes of $A$-modules, which are quasi-isomorphic to bounded complexes of projectives. Then ${\rm Perf-}A$ is a thick subcategory of $D^b({\rm gr-}A).$ We recall a useful lemma.
\begin{lemma}
$HMF_\ZZ(W) \simeq D^{gr}_{sg}(A)$, where $D^{gr}_{sg}(A) := D^b({\rm gr-}A)/{\rm Perf-}A.$
\end{lemma}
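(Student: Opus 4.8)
The plan is to prove the equivalence $HMF_\ZZ(W)\simeq D^{gr}_{sg}(A)$ by constructing a functor in each direction and checking they are mutually quasi-inverse, following the classical argument of Buchweitz and Orlov adapted to the graded setting. First I would recall that every object of $MF_\ZZ(W)$ gives a two-periodic complex of free $R/W$-modules: reducing $p_0,p_1$ modulo $W$ produces $\overline{p}_0\colon \overline{P}_0\to\overline{P}_1$, $\overline{p}_1\colon\overline{P}_1\to\overline{P}_0(d)$ with $\overline{p}_1\circ\overline{p}_0=0$ and $\overline{p}_0(d)\circ\overline{p}_1=0$, and this two-periodic complex is in fact acyclic (exactness follows from the matrix factorization identity together with the fact that $W$ is a nonzerodivisor in $R$). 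The cokernel module $M:=\operatorname{coker}(\overline{p}_1\colon\overline{P}_1\to\overline{P}_0(d))$ is a finitely generated graded $A$-module, and I would define the functor $F\colon HMF_\ZZ(W)\to D^{gr}_{sg}(A)$ by sending $(P,d_P)$ to (a shift of) $M$, viewed as an object of the Verdier quotient. One checks $F$ is well-defined on morphisms: a dg-morphism of matrix factorizations induces an $A$-module map of cokernels, and null-homotopic morphisms of matrix factorizations induce maps that factor through projectives, hence are zero in $D^{gr}_{sg}(A)$; mapping cones are sent to mapping cones, so $F$ is exact.

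For the functor in the other direction, I would use that any finitely generated graded $A$-module $M$ has, far out to the left, a ``stabilization'': taking a graded free resolution $\cdots\to F_1\to F_0\to M\to 0$ over $A$, the high syzygies $\Omega^k M$ for $k\gg 0$ are maximal Cohen--Macaulay $A$-modules (here one uses that $A=R/W$ is a hypersurface ring, hence Gorenstein, so MCM modules over $A$ are exactly those admitting two-sided infinite free resolutions, i.e. they are the modules of the form $\operatorname{coker}$ of a matrix factorization). Lifting such a stabilized resolution to $R$ produces a matrix factorization of $W$, and this yields a functor $G\colon D^{gr}_{sg}(A)\to HMF_\ZZ(W)$. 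The key point, classical since Buchweitz, is that $D^{gr}_{sg}(A)$ already identifies with the \emph{stable category} of graded MCM $A$-modules, because $\operatorname{Perf-}A$ is generated by $A$ itself and every object of $D^b(\operatorname{gr-}A)$ becomes isomorphic in the quotient to a high syzygy of its cohomology. So the real content is the classical chain of equivalences $D^b(\operatorname{gr-}A)/\operatorname{Perf-}A \simeq \underline{\operatorname{MCM}}^{gr}(A)\simeq HMF_\ZZ(W)$.

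The main obstacle, and the step I would spend the most care on, is verifying that $G$ is well-defined and that $F$ and $G$ are mutually inverse \emph{as triangulated functors}, not merely on objects. Concretely: (i) showing the syzygy construction is independent of the choice of resolution up to the stabilization isomorphism, which requires the uniqueness-up-to-homotopy of MCM approximations and the fact that the two-periodic tail is well-defined up to homotopy (this is where the hypersurface/Gorenstein hypothesis on $A$ is essential — it guarantees that the stable module category is triangulated with shift given by the syzygy $\Omega^{-1}$, matching the shift $[1]$ on $MF_\ZZ(W)$ recorded after Definition of the shift above); (ii) matching the triangulated structures, i.e. checking that the exact triangles of $HMF_\ZZ(W)$ given by mapping cones of Proposition on cohomology categories correspond under $F$ to the standard triangles in the Verdier quotient; and (iii) tracking the internal $\ZZ$-grading shifts $(d)$ carefully throughout, since $K^\cdot[2]=K^\cdot(d)$ means the periodicity interacts nontrivially with the grading. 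Once the dictionary ``matrix factorization $\leftrightarrow$ MCM module $\leftrightarrow$ its image in the singularity category'' is set up with all grading shifts pinned down, the equivalence follows from Buchweitz's theorem; I would cite \cite{Or} and \cite{Buchweitz} (or the relevant section of \cite{Or}) for the precise statement rather than reproving it in full.

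\begin{remark}
Since this lemma is stated as ``a useful lemma'' and is standard, in the actual write-up one would likely just cite Orlov's paper \cite{Or} and Buchweitz; the sketch above indicates the structure of that cited argument. The only paper-specific care needed is the sign and grading conventions fixed in the definition of $MF_\ZZ(W)$ above.
\end{remark}
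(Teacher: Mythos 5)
Your proposal is correct and follows essentially the same route the paper indicates: the paper also omits a full proof, pointing to Eisenbud's theorem on the eventual $2$-periodicity of minimal graded free resolutions over the hypersurface ring $A=R/W$ for one direction, and to the cokernel construction ${\rm Cok}(k^{-1}\colon K^{-1}\to K^0)$ for the other, with the details deferred to Orlov. Your additional discussion of the stable category of graded MCM modules and the grading/shift bookkeeping is a faithful elaboration of that same (Buchweitz--Orlov) argument rather than a different approach.
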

We omit the proof but explain its origin. Since $A$ is singular, the minimal $A$-free resolution of an object in $D^b({\rm gr-}A)$ need not terminate, but it eventually become quasi-(2-)periodic by Eisenbud's theorem \cite{Eis}. If we replace free $A$-modules in the resolution by free $R$-modules of same ranks and consider differentials as morphisms of $R$-modules, then the asymptotic 2-periodic part indeed becomes a matrix factorization of $W$. If two objects in $D^b({\rm gr-}A)$ have free resolutions which are asymptotically the same, then they define equivalent object in $D^{gr}_{sg}(A)$ by definition, or equivalently, they give the same matrix factorization.
Finally, we remark that given a matrix factorization $K^\cdot$ we take ${\rm Cok}(k^{-1}:K^{-1} \to K^0)$ to obtain an object of $D^{gr}_{sg}(A).$

\section{Orlov's LG/CY correspondence}\label{sec:LGCY}
Let $X=\Proj(A)$ where $A=R/W$ as above, i.e. $R=\Lambda[x_0,...,x_n]$ and $W$ is a homogeneous polynomial. In this section we recall the correspondence between $HMF_\ZZ(W)$ and $D^bCoh(X)$ in \cite{Or}.
\begin{remark}$A$ is a {\em Gorenstein} algebra, i.e. it has finite injective dimension $n$ and if $D(\bk):=\RR \Hom_A(\bk,A)$ where $\bk \cong A/(x_0,...,x_n),$ (observe that $\bk \cong \Lambda$ as a vector space. Nevertheless, we distinguish the notation $\bk$ from $\Lambda$ because we want to emphasize that it is an $A$-module) then it is isomorphic to $\bk(a)[-n]$ for some integer $a$ which is called the {\em Gorenstein parameter}. This homological condition on $A$ enables us to construct various derived functors (in later sections) between {\em bounded} derived categories. Also, if $W$ defines a Calabi-Yau variety, then $a=0$. These properties will be crucially used in Section \ref{sec:mainthm}.
\end{remark}
The idea of LG/CY correspondence comes from the fact that two categories are both Verdier quotients of $D^b({\rm gr-}A)$. Let ${\rm tors-}A$ be the subcategory of ${\rm gr-}A$ of torsion modules, i.e. $A$-modules which are finite dimensional over $A_0 \cong \Lambda.$ By Serre's theorem, $D^bCoh(X) \simeq D^b({\rm qgr-}A):=D^b({\rm gr-}A)/D^b({\rm tors-}A)$, whereas $HMF_\ZZ(W) \simeq D^b({\rm gr-}A)/{\rm Perf-}A$ just as shown above. 

If the quotient functors $\pi: D^b({\rm gr-}A) \to D^b({\rm qgr-}A)$ and $q: D^b({\rm gr-}A) \to D^{gr}_{sg}(A)$ have adjoint functors, then we can lift objects and morphisms in a quotient category to those in $D^b({\rm gr-}A)$, and project them to the other quotient, and then we would obtain functors between two quotient categories. Unfortunately, $q$ does not admit any adjoint functor while $\pi$ admits a right adjoint, but if we consider restrictions $\pi_i: D^b({\rm gr-}A_{\geq i})  \hookrightarrow D^b({\rm gr-}A) \to D^b({\rm qgr-}A)$ and $q_i: D^b({\rm gr-}A_{\geq i}) \hookrightarrow D^b({\rm gr-}A) \to D^{gr}_{sg}(A)$, where ${\rm gr-}A_{\geq i}$ consists of modules $M$ such that $M_p=0$ for $p<i$, then $\pi_i$ still admits a right adjoint, and $q_i$ has a left adjoint. Now we describe the adjoint of $\pi_i$.

\begin{lemma}
Define $\RR \omega_i: D^b({\rm qgr-}A) \to D^b({\rm gr-}A_{\geq i})$ by \[\RR \omega_i(M):=\bigoplus_{k=i}^\infty \RR\Hom_{D^b({\rm qgr-}A)}(\pi A, M(k)).\]
Then $\RR\omega_i$ is fully faithful and $\RR\omega_i$ is the right adjoint to $\pi_i$. Moreover, all cohomologies $\RR^j \omega_i(M)$ are contained in ${\rm tors-}A$ for $j>0$.
\end{lemma}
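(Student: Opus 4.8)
The plan is to deduce all three assertions from the classical Serre correspondence together with two elementary facts about the internal grading. For a graded $A$-module (or complex thereof) $C$, write $C_{\geq i}$ and $C_{<i}$ for the brutal truncations of the internal grading; since $A$ is non-negatively graded these sit in a short exact sequence $0\to C_{\geq i}\to C\to C_{<i}\to 0$, and both functors $(-)_{\geq i},(-)_{<i}$ are exact. Let ${\rm GrMod-}A$ denote the category of all graded $A$-modules, and let $\RR\omega:D^b({\rm qgr-}A)\to D({\rm GrMod-}A)$ be the derived right adjoint of $\pi$, the underived $\omega$ being the Serre section $\mathcal F\mapsto\bigoplus_{k\in\ZZ}H^0(X,\mathcal F(k))$. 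Then the functor of the statement is literally $\RR\omega_i(M)=\big(\RR\omega(M)\big)_{\geq i}$, and I would use that the grading truncation $(-)_{\geq i}$ is right adjoint to the inclusion $\iota_i:{\rm GrMod-}A_{\geq i}\hookrightarrow{\rm GrMod-}A$, which holds because at the abelian level a graded homomorphism out of a module concentrated in degrees $\geq i$ automatically lands in the $\geq i$ part of its target, and both functors being exact this passes verbatim to derived categories.

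First I would check that $\RR\omega_i$ actually lands in $D^b({\rm gr-}A_{\geq i})$; only boundedness and finite generation require attention, the grading being $\geq i$ by construction. For $M$ a coherent sheaf one has $\RR^j\omega_i(M)=\bigoplus_{k\geq i}H^j(X,M(k))$: Grothendieck vanishing bounds $j$ by $\dim X$, while $\RR^0\omega_i(M)$ agrees in large internal degree with $\bigoplus_{k\geq 0}H^0(X,M(k))$, which is finitely generated by Serre's theorem; the general case $M\in D^b({\rm qgr-}A)$ follows by reduction to sheaves along truncation triangles. The same computation yields the last assertion: for a coherent sheaf $M$ and $j>0$, Serre vanishing forces $H^j(X,M(k))=0$ for $k\gg 0$, so $\RR^j\omega_i(M)$ is supported in only finitely many internal degrees, each finite-dimensional over $\Lambda=A_0$, hence lies in ${\rm tors-}A$.

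For the adjunction I would combine the standard derived version of the Serre correspondence, $\Hom_{D^bCoh(X)}(\pi N,M)\cong\Hom_{D({\rm GrMod-}A)}(N,\RR\omega M)$ for $N\in D^b({\rm gr-}A)$, with the adjunction $\iota_i\dashv(-)_{\geq i}$ of the first paragraph: for $N\in D^b({\rm gr-}A_{\geq i})$,
\begin{multline*}
\Hom_{D^bCoh(X)}(\pi_i N,M)\cong\Hom_{D({\rm GrMod-}A)}(\iota_i N,\RR\omega M)\\
\cong\Hom_{D({\rm GrMod-}A_{\geq i})}\big(N,(\RR\omega M)_{\geq i}\big)=\Hom_{D^b({\rm gr-}A_{\geq i})}(N,\RR\omega_i M),
\end{multline*}
the last equality using the first paragraph (so that $\RR\omega_i M$ genuinely lies in the bounded subcategory, into which $D^b({\rm gr-}A_{\geq i})$ embeds fully faithfully). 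Naturality being built into these identifications, $\RR\omega_i$ is right adjoint to $\pi_i$.

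It then remains to prove full faithfulness, which for a right adjoint is equivalent to the counit $\pi_i\RR\omega_i(M)\to M$ being an isomorphism. Unwinding the composite adjunction, this counit is the composite $\pi\big((\RR\omega M)_{\geq i}\big)\to\pi(\RR\omega M)\xrightarrow{\sim}M$, whose first arrow is $\pi$ applied to the inclusion $(\RR\omega M)_{\geq i}\hookrightarrow\RR\omega M$ and whose second arrow is the classical isomorphism $\pi\circ\RR\omega\simeq\id$ (the higher-cohomology part of $\RR\omega M$ vanishes in large internal degree, so $\pi$ annihilates it). The cofiber of the first arrow is $(\RR\omega M)_{<i}$, a complex whose cohomology modules are concentrated in internal degrees $<i$, hence bounded above; multiplying any homogeneous element of such a module by a high enough power of the irrelevant ideal $A_{\geq 1}$ lands it in a degree where the module vanishes, so the module is killed by $\pi$, and therefore the counit is an isomorphism. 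The step I would be most careful about is exactly this last point, since it turns on distinguishing the Serre subcategory ${\rm tors-}A$ of \emph{finite-dimensional} modules (which is why $\RR^j\omega_i(M)$, being genuinely finite-dimensional, is torsion in the last assertion) from the larger class of (typically infinite-dimensional) graded modules annihilated degreewise by $A_{\geq 1}$, which is what makes $\pi\big((\RR\omega M)_{<i}\big)$ vanish. Everything else is a formal chase through adjunctions; following Orlov \cite{Or}, neither smoothness nor the Gorenstein property of $A$ is used here, both entering only later when one identifies $\CG_i$ as an equivalence.
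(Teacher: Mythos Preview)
The paper does not actually prove this lemma; it is stated without proof as a recollection from Orlov~\cite{Or}, so there is no ``paper's own proof'' to compare against. Your sketch is a correct and standard reconstruction of the argument: reduce to the classical Serre adjunction $\pi\dashv\RR\omega$, compose with the (exact) truncation adjunction $\iota_i\dashv(-)_{\geq i}$, and deduce full faithfulness from $\pi\RR\omega\simeq\id$ together with the observation that $(\RR\omega M)_{<i}$ is killed by $\pi$. The use of Serre vanishing to show that $\RR^j\omega_i(M)$ is finite-dimensional for $j>0$, and of Grothendieck vanishing to bound the cohomological amplitude, is exactly right.

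Your caveat in the last paragraph is well placed and worth keeping. The module $(\RR\omega M)_{<i}$ need not lie in ${\rm tors\text{-}}A$ as defined in the paper (finite-dimensional over $\Lambda$): for instance its top cohomology $\bigoplus_{k<i}H^{\dim X}(X,M(k))$ is typically infinite-dimensional. So the vanishing $\pi\big((\RR\omega M)_{<i}\big)=0$ really does require working in $D({\rm GrMod\text{-}}A)$ and quotienting by the larger Serre subcategory of locally $A_{\geq 1}$-torsion modules, then checking that this restricts to the paper's $\pi$ on finitely generated objects. This is routine but should be made explicit, since the paper's definitions live entirely in the finitely generated world. Your final remark that the Gorenstein hypothesis is not used here is correct; in Orlov's argument it enters only when constructing the semiorthogonal decompositions and identifying $\mathcal D_i$ with $\mathcal T_i$.
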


We use the same notation for the functor $\RR\omega_i: D^b Coh(X) \to D^b({\rm gr-}A_{\geq i})$, 
\[\RR \omega_i(\mathcal{E}):=\bigoplus_{k=i}^\infty \RR\Hom_{D^b Coh(X)}(\mathcal{O}_X,\mathcal{E}(k)).\] Note that $\RR\omega_i$ is  well-defined from the Gorenstein condition.

\begin{definition}
Let $\calC$ be a triangulated category. $\calC=\langle \mathcal{A},\mathcal{B} \rangle$ is called a {\em semiorthogonal decomposition} of $\calC$ if
 \begin{enumerate}
  \item $\mathcal{A}$ and $\mathcal{B}$ are full triangulated subcategories.
  \item For any object in $C \in Ob(\calC)$, there is an exact triangle
   \[\xymatrix{A \ar[rr]^{[1]} & & B \ar[ld]\\ & C \ar[lu] & }\] for some $A \in Ob(\mathcal{A})$ and $B \in Ob(\mathcal{B})$. In this case, we call $A$ and $B$ as {\em orthogonal projections} of $C$ onto $\mathcal{A}$ and $\mathcal{B}$ respectively.
  \item $\Hom_\calC(B,A)=0$ for any $B \in Ob(\mathcal{B})$, $A \in Ob(\mathcal{A}).$
  
 \end{enumerate} 
\end{definition}

\begin{lemma}
Let $\mathcal{S}_{\geq i}$ be the triangulated subcategory generated by ${\bk}(e)$ with $e \leq -i$, and $\mathcal{P}_{\geq i}$ be the triangulated subcategory generated by $A(e)$ with $e \leq -i$.
Then for any $i\in \ZZ$ we have semiorthogonal decompositions
\[D^b({\rm gr-}A_{\geq i})=\langle \mathcal{D}_i, \mathcal{S}_{\geq i} \rangle = \langle \mathcal{P}_{\geq i},\mathcal{T}_i \rangle\]
where $\mathcal{D}_i$ is equivalent to $D^b({\rm qgr-}A)$ under the functor $\RR\omega_i$ and $\mathcal{T}_i$ is equivalent to $D^{gr}_{sg}(A)$.
\end{lemma}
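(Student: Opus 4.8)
The plan is to verify the three axioms of a semiorthogonal decomposition for each of the two claimed pairs, treating them as the "geometric" decomposition $\langle \mathcal{D}_i, \mathcal{S}_{\geq i}\rangle$ and the "singularity" decomposition $\langle \mathcal{P}_{\geq i}, \mathcal{T}_i\rangle$ in parallel. First I would set up the two generating subcategories carefully: $\mathcal{S}_{\geq i}$ is the thick subcategory of $D^b({\rm gr-}A_{\geq i})$ generated by the graded residue fields $\bk(e)$ for $e\le -i$, and $\mathcal{P}_{\geq i}$ the one generated by the free modules $A(e)$ for $e\le -i$. Note that both $\bk(e)$ and $A(e)$ for $e\le -i$ genuinely lie in ${\rm gr-}A_{\geq i}$ (their gradings are supported in degrees $\ge i$), so these subcategories make sense inside the smaller derived category. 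The subcategory $\mathcal{T}_i$ is \emph{defined} as the right-orthogonal complement ${}^\perp\mathcal{P}_{\geq i}$ inside $D^b({\rm gr-}A_{\geq i})$ (equivalently, the left factor that remains), and $\mathcal{D}_i$ likewise as the image of $\RR\omega_i$; part of the work is to identify these orthogonals correctly.

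The key steps, in order, are: (1) \emph{Semiorthogonality}. For $\langle \mathcal{P}_{\geq i},\mathcal{T}_i\rangle$ this is essentially by definition once $\mathcal{T}_i$ is taken as the orthogonal; for $\langle \mathcal{D}_i,\mathcal{S}_{\geq i}\rangle$ I must show $\Hom(\bk(e), \RR\omega_i(\mathcal{E}))=0$ for $e\le -i$. This follows from the previous Lemma: the positive cohomologies $\RR^j\omega_i$ are torsion, but more to the point $\RR\omega_i(\mathcal{E})$ is an object all of whose cohomology except $\RR^0\omega_i$ is torsion concentrated in low degrees, and $\RR^0\omega_i(\mathcal{E})=\bigoplus_{k\ge i}H^0(X,\mathcal{E}(k))$ has no sub- or quotient module isomorphic to $\bk(e)$ with $e\le -i$ because such a $\bk(e)$ is concentrated in degree $-e\ge i$ while $\RR\omega_i$ is "saturated" (this is the fully-faithfulness / adjunction content). (2) \emph{Generation / existence of the triangle}. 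Here I would use the adjunction: for $M\in D^b({\rm gr-}A_{\geq i})$, the unit of the adjunction $(\pi_i,\RR\omega_i)$ gives a map $M\to \RR\omega_i(\pi M)$, and I claim its cone lies in $\mathcal{S}_{\geq i}$. Equivalently, an object killed by $\pi$ (a torsion complex) that moreover lives in ${\rm gr-}A_{\geq i}$ is built from $\bk(e)$'s with $e\le -i$ — this is a dévissage: a finitely generated torsion graded module supported in degrees $\ge i$ has a finite filtration with subquotients $\bk(e)$, $e\le -i$. For the $\mathcal{P}$-side, any $M\in D^b({\rm gr-}A_{\geq i})$ admits a (possibly infinite but eventually 2-periodic, by Eisenbud) graded free resolution; truncating it produces a triangle with the "perfect part" in $\mathcal{P}_{\geq i}$ and the tail in $\mathcal{T}_i = D^{gr}_{sg}(A)$, matching the description of $HMF_\ZZ(W)$ in Section~\ref{sec:mf}. (3) \emph{Identification of the left factors}: $\mathcal{D}_i\simeq D^b({\rm qgr-}A)$ because $\RR\omega_i$ is fully faithful (previous Lemma) with essential image exactly the right-orthogonal to $\mathcal{S}_{\geq i}$; and $\mathcal{T}_i\simeq D^{gr}_{sg}(A)$ because $q_i$ restricted to $\mathcal{T}_i$ is an equivalence, its quasi-inverse being the left adjoint of $q_i$.

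The main obstacle I anticipate is step (1)–(2) for the geometric decomposition: showing precisely that the essential image of $\RR\omega_i$ is the right-orthogonal complement of $\mathcal{S}_{\geq i}$ and that every object fits into the required triangle. This is where the Gorenstein hypothesis does real work — it guarantees $\RR\omega_i$ lands in the \emph{bounded} derived category (the higher $\RR^j\omega_i$ vanish for $j$ large and are torsion in between), so that the cone of the adjunction unit is a bounded torsion complex and hence genuinely lies in $\mathcal{S}_{\geq i}$. Controlling the degrees ("supported in degrees $\ge i$" forcing shifts $e\le -i$) requires tracking the grading through the adjunction unit carefully; this bookkeeping, rather than any deep new idea, is the delicate part. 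The perfect-versus-singularity decomposition is more formal, being essentially Eisenbud's eventual periodicity plus the definition of $D^{gr}_{sg}(A)$ as a Verdier quotient by ${\rm Perf}$, so I expect it to go through with little friction once the truncation-of-resolution argument is set up.
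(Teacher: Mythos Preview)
The paper does not actually prove this lemma; it is quoted from Orlov \cite{Or}, and the subsequent Remark only sketches the idea (for the $\mathcal{P}$-side: show $\mathcal{P}_{\geq i}$ is left admissible abstractly, then identify the left orthogonal $\mathcal{T}_i$ with $D^{gr}_{sg}(A)$ by comparing semiorthogonal decompositions of $D^b({\rm gr-}A)$ and $D^b({\rm grproj-}A)$). Your treatment of the geometric decomposition $\langle \mathcal{D}_i,\mathcal{S}_{\geq i}\rangle$ is essentially correct and agrees with that sketch, although your semiorthogonality argument is more complicated than necessary: the adjunction from the previous Lemma gives $\Hom(\bk(e),\RR\omega_i(\mathcal{E}))\cong \Hom(\pi_i\bk(e),\mathcal{E})=0$ in one line, since $\bk(e)$ is torsion.

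There is, however, a genuine gap in your plan for $\langle \mathcal{P}_{\geq i},\mathcal{T}_i\rangle$. Truncating a projective resolution $F^\bullet\simeq M$ yields the triangle
\[
\sigma^{\ge -N}F^\bullet \longrightarrow M \longrightarrow \Omega^N M[N+1],
\]
i.e.\ a map \emph{from} a perfect complex \emph{to} $M$. But in the paper's convention (Definition of semiorthogonal decomposition, with $\Hom(\mathcal{B},\mathcal{A})=0$), the decomposition $\langle \mathcal{P}_{\geq i},\mathcal{T}_i\rangle$ requires a triangle $T\to M\to P$ with $P\in\mathcal{P}_{\geq i}$ --- a perfect approximation of $M$ from the \emph{right}. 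The syzygy $\Omega^N M[N+1]$ is not in ${}^\perp\mathcal{P}_{\geq i}$ either: even once $\Omega^N M$ is maximal Cohen--Macaulay one still has $\Hom_{{\rm gr-}A}(\Omega^N M,A(e))\neq 0$ in general, so $\Hom(\Omega^N M[N+1],A(e)[N+1])\neq 0$. (Concretely, for $M=\bk$ in the Calabi--Yau case the correct SOD triangle is $T\to\bk\to A[2]$, coming from the nonzero class in $\Ext^2_A(\bk,A)$, not from any brutal truncation of the Koszul resolution.) Producing the map $M\to P$ genuinely uses the Gorenstein property --- that $A(e)$ has finite injective dimension --- and this is exactly why Orlov argues admissibility abstractly rather than via resolutions. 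Your identification in step~(3) then becomes circular, since you invoke $\mathcal{T}_i=D^{gr}_{sg}(A)$ in step~(2) before establishing it.
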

\begin{remark}
Given an object in $D^b({\rm qgr-}A)$, we can directly construct an object in $D^b({\rm gr-}A_{\geq i})$ because the right adjoint functor is explicitly given as $\RR \omega_i$. On the other hand, it is more difficult to obtain an object in $D^b({\rm gr-}A_{\geq i})$ from an object in $D^{gr}_{sg}(A)$, because the left adjoint functor is not explicitly given. In the construction of the latter semiorthogonal decomposition of $D^b({\rm gr-}A_{\geq i})$, $\mathcal{P}_{\geq i}$ is first shown to be left admissible, and then $\mathcal{T}_i$ is just given by the left orthogonal of $\mathcal{P}_{\geq i}$. One can prove that $\mathcal{T}_i \cong D^b({\rm gr-}A_{\geq i})/\mathcal{P}_{\geq i}$ is equivalent to $D^{gr}_{sg}(A)$ by considering semiorthogonal decompositions of $D^b({\rm gr-}A)$ and $D^b({\rm grproj-}A)$.
\end{remark}

Now we introduce Orlov's main theorem.
\begin{theorem}[\cite{Or}]
Let $a$ be the Gorenstein parameter of $A$. Then $D^{gr}_{sg}(A)$ and $D^b({\rm qgr-}A)$ are related as following:
 \begin{enumerate}
  \item if $a>0$, for each $i \in \ZZ$ there are fully faithful functors $\Phi_i : D^{gr}_{sg}(A) \to D^b({\rm qgr-}A)$ and semiorthogonal decomposition
   \[D^b({\rm qgr-}A)=\langle \pi A(-i-a+1),...,\pi A(-i),\Phi_i D^{gr}_{sg}(A)\rangle\] where $\pi: D^b({\rm gr-}A) \to D^b({\rm qgr-}A)$ is the natural projection and 
   \[\Phi_i:D^{gr}_{sg}(A) \simeq \mathcal{T}_i \hookrightarrow D^b({\rm gr-}A) \stackrel{\pi}{\rightarrow} D^b({\rm qgr-}A).\]
  \item if $a<0$, for each $i \in \ZZ$ there are fully faithful functors $\CG_i: D^b({\rm qgr-}A) \to D^{gr}_{sg}(A)$ and semiorthogonal decomposition 
   \[D^{gr}_{sg}(A)=\langle q {\bf k}(-i),...,q {\bf k}(-i+a+1),\CG_i D^b({\rm qgr-}A)\rangle\] where $q: D^b({\rm gr-}A) \to D^{gr}_{sg}(A)$ is the natural projection and 
   \[\CG_i: D^b({\rm qgr-}A) \simeq \mathcal{D}_{i-a} \hookrightarrow D^b({\rm gr-}A) \stackrel{q}{\rightarrow} D^{gr}_{sg}(A).\]
  \item if $a=0$, $D^{gr}_{sg}(A)$ and $D^b({\rm qgr-}A)$ are equivalent via $\Phi_i$ and $\CG_i$.
 \end{enumerate}
\end{theorem}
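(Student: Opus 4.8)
The plan is to derive the whole statement from the two semiorthogonal decompositions of the single category $D^b({\rm gr-}A_{\geq i})$ provided by the preceding lemma,
\[D^b({\rm gr-}A_{\geq i})=\langle \mathcal{D}_i, \mathcal{S}_{\geq i}\rangle=\langle \mathcal{P}_{\geq i},\mathcal{T}_i\rangle,\]
by comparing them through the Gorenstein duality $D(\bk)\cong\bk(a)[-n]$. I would begin by recording how the two global quotients interact with these decompositions: $\pi$ annihilates $D^b({\rm tors-}A)\supset\mathcal{S}_{\geq i}$, so it restricts to the equivalence $\mathcal{D}_i\xrightarrow{\sim}D^b({\rm qgr-}A)$ inverse to $\RR\omega_i$; dually $q$ annihilates ${\rm Perf-}A\supset\mathcal{P}_{\geq i}$, so it restricts to the equivalence $\mathcal{T}_i\xrightarrow{\sim}D^{gr}_{sg}(A)$. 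This makes the composites defining $\Phi_i$ and $\CG_i$ meaningful, and reduces the theorem to understanding how an object of one component, say $\RR\omega_{i-a}\mathcal{E}\in\mathcal{D}_{i-a}$, decomposes with respect to the \emph{other} decomposition $\langle\mathcal{P}_{\geq i-a},\mathcal{T}_{i-a}\rangle$.

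The decisive point, and the one I would dwell on, is that the two complementary collections are interchanged by Gorenstein duality up to a controlled shift. The membership $\RR\omega_{i-a}\mathcal{E}\in\mathcal{D}_{i-a}=\mathcal{S}_{\geq i-a}^{\perp}$ is precisely the vanishing of $\RR\Hom(\bk(e),\RR\omega_{i-a}\mathcal{E})$ for all $e\le a-i$. Applying $\RR\Hom_A(-,A)$ and using $\RR\Hom_A(\bk(e),A)\cong\bk(a-e)[-n]$ converts such conditions, phrased against the residue-field generators of $\mathcal{S}$, into conditions phrased against the free generators $A(e)$ of $\mathcal{P}$, but with the grading shifted by exactly $a$ and the cohomological degree by $n$. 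It is this shift that forces the index correction $i\mapsto i-a$ between $\mathcal{D}$ and $\mathcal{T}$, and the mismatch between the ranges $\{e\le a-i\}$ and $\{e\le -i\}$ leaves over a finite window of exactly $|a|$ twists.

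With the duality in hand I would assemble the asserted decompositions. When $a>0$ I project $\langle\mathcal{P}_{\geq i},\mathcal{T}_i\rangle$ by $\pi$: the collapsed free part contributes only the finitely many line bundles $\pi A(-i-a+1),\dots,\pi A(-i)$, since the duality relations express every further $\pi A(e)$ with $e\le -i-a$ through these together with $\pi\mathcal{T}_i=\Phi_iD^{gr}_{sg}(A)$, and the same vanishings make the collection semiorthogonal; thus $\Phi_i$ is fully faithful with the stated complement. When $a<0$ I run the mirror argument, projecting $\langle\mathcal{D}_{i-a},\mathcal{S}_{\geq i-a}\rangle$ by $q$, and the leftover window becomes $q\bk(-i),\dots,q\bk(-i+a+1)$, giving $\CG_i$ fully faithful. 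When $a=0$ the leftover window is empty on both sides, so $\Phi_i$ and $\CG_i$ have no complement and are forced to be mutually inverse equivalences.

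The main obstacle, and the reason the argument is delicate rather than formal, is that this bookkeeping must be carried out at the level of genuine $\RR\Hom$-complexes, not merely their cohomology: full faithfulness of $\CG_i$ and $\Phi_i$ hinges on controlling the $\mathcal{P}$-component (respectively the $\mathcal{S}$-component) of the $\RR\omega$-images, which is not orthogonal on the nose and only becomes controllable after the duality isomorphism is applied with its precise grading and degree shifts. Keeping track of these shifts, verifying that exactly $|a|$ generators survive with the correct twists, and confirming the $\Hom$-vanishing $\Hom(\mathcal{B},\mathcal{A})=0$ that defines a semiorthogonal decomposition are where the Gorenstein hypothesis — finite injective dimension together with $D(\bk)\cong\bk(a)[-n]$ — is used in an essential and non-negotiable way.
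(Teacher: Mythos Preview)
The paper does not supply its own proof of this theorem: it is quoted from \cite{Or} and the argument is entirely deferred to Orlov's paper, with only the Calabi--Yau case $a=0$ illustrated afterward by explaining how to pass back and forth between the two quotients. There is thus no in-paper proof to compare against.

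That said, your sketch is the correct outline of Orlov's original argument: the two semiorthogonal decompositions of $D^b({\rm gr-}A_{\geq i})$ from the preceding lemma are the starting point, and the Gorenstein duality $\RR\Hom_A(\bk,A)\cong\bk(a)[-n]$ is exactly what converts orthogonality conditions against the $\bk(e)$'s into conditions against the $A(e)$'s with a grading shift of $a$, producing the $|a|$-term exceptional window. Your identification of the key mechanism is sound. One caution: the step ``applying $\RR\Hom_A(-,A)$ and using $\RR\Hom_A(\bk(e),A)\cong\bk(a-e)[-n]$ converts such conditions'' is the heart of the matter but is not as direct as you suggest---Orlov does not literally dualize the vanishing $\RR\Hom(\bk(e),M)=0$ into a vanishing $\RR\Hom(M,A(e'))=0$; rather he refines the semiorthogonal decompositions further (splitting off $\bk(-i),\dots,\bk(-i-a+1)$ from $\mathcal S_{\geq i}$ when $a>0$, respectively $A(-i),\dots,A(-i+a+1)$ from $\mathcal P_{\geq i}$ when $a<0$) and shows the remaining pieces coincide. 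Your high-level narrative is right, but filling it in requires that intermediate refinement rather than a single duality swap.
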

If $X=\Proj(A)$ is a Calabi-Yau variety, i.e. $a$ is 0, then $D^b({\rm qgr-}A) (\cong D^b Coh(X))$ and $D^{gr}_{sg}(A) (\cong HMF_\ZZ(W))$ are equivalent. In particular, in this case $\mathcal{T}_i = \mathcal{D}_i$ in $D^b({\rm gr-}A_{\geq i})$.

The equivalence for the Calabi-Yau case is illustrated as follows. Suppose that we are given an object $\mathcal{E}$ in $D^b Coh(X)$. Then we get a complex $\RR \omega_i(\mathcal{E})\in D^b({\rm gr-}A_{\geq i}).$ We compute its minimal free resolution over $A$ such that the differentials give a matrix factorization of $W$. The other direction, namely from marix factorizations to coherent sheaves, is not as straightforward as before, because we do not have an explicit form of the functor from $HMF_\ZZ(W)(\simeq D^{gr}_{sg}(A))$ to $D^b({\rm gr-}A_{\geq i})$. So, given a matrix factorization $K^\cdot$, we take $M={\rm Cok}(k^{-1}) \in D^b({\rm gr-}A)=\langle D^b({\rm gr-}A_{\geq i}), \mathcal{P}_{<i} \rangle$, compute its orthogonal projection $M'$ onto $D^b({\rm gr-}A_{\geq i})=\langle \mathcal{P}_{\geq i},\mathcal{T}_i \rangle$, and compute its orthogonal projection $M''$ onto $\mathcal{T}_i$. Finally, we project $M'' \in D^b({\rm gr-}A_{\geq i}) \subset D^b({\rm gr-}A)$ to $D^b({\rm gr-}A)/D^b({\rm tors-}A)\simeq D^b({\rm qgr-}A)\simeq D^b Coh(X)$. Of course, instead of starting from ${\rm Cok}(k^{-1})$, we can use another $A$-module whose free resolution eventually gives $K^\cdot$.

We remark that the above correspondences are lifted to functors between dg-categories $D^b_\infty Coh(X)$ and $MF_\ZZ(W)$ by \cite{CT}. This fact will be used in the last section.




\section{Polishchuk-Zaslow mirror symmetry}\label{sec:CYCY}
Let $T^2$ be a symplectic torus given by $\CC/(\ZZ \oplus e^{2\pi i/3}\ZZ)$. Let $\alpha$ be its symplectic area, and $q:=T^\alpha$ in $\Lambda.$
Any unobstructed Lagrangian circle $L$ is described by a straight line $(c,0)+t\overrightarrow{v}$, $t\in \RR$, where $\overrightarrow{v}=a+e^{2\pi i/3}b$ with $a,b\in \ZZ$, and $c\in [0,1)$. In this case we denote $L=L_{(a,b),c}$, and if $c=0$ we omit it.

Then the categorical mirror symmetry of elliptic curves proved by \cite{PZ} is given by the following theorem:

\begin{theorem}[\cite{PZ}]
There is an equivalence of categories \[\CF: D^b Coh(X) \simeq D^\pi Fu(E),\] $\Phi(\CO_X(np_0))=L_{(1,-n)}$, $\Phi(\CO_{p_0})=L_{(0,-1),\frac{1}{2}}$ 
where $X$ is a mirror elliptic curve and $p_0$ is a point in $X$.
\end{theorem}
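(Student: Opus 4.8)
The plan is to construct $\Phi$ on a generating set of objects, prove it is cohomologically fully faithful by matching morphism spaces \emph{together with} their product structure, and then pass to the split-closed derived categories and check essential surjectivity. First I would fix the mirror dictionary: the complex modulus of $X=\CC/(\ZZ\oplus\tau\ZZ)$ is tied to the Novikov variable $q=T^\alpha$ by the mirror map, and on objects I assign, as dictated by the statement, the line $L_{(1,-n)}$ to the degree-$n$ line bundle $\CO_X(np_0)$ and the vertical line $L_{(0,-1),1/2}$ to the skyscraper $\CO_{p_0}$. A flat $U(1)$-holonomy on the Lagrangian records the point of the Jacobian (equivalently the divisor up to which one twists), and the shift $[1]$ corresponds to the change of phase by $-\pi$ discussed in Section \ref{sec:Fuk}. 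By Atiyah's classification, every indecomposable coherent sheaf on $X$ is a twist of such an $\CO_X(np_0)$, a stable bundle, or a torsion sheaf, each obtained from these generators by tensoring, extensions and the ${\rm SL}(2,\ZZ)$-action; mirror to this, every rational-slope Lagrangian with holonomy arises from $L_{(1,-n)}$ and $L_{(0,-1),1/2}$ under the mapping-class-group action. It therefore suffices to define $\Phi$ on the generators and extend by triangulated functoriality.

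Next I would match morphisms cohomologically. For two line bundles with $\deg\CL_2-\deg\CL_1=d>0$, Riemann--Roch on the elliptic curve gives $\dim\Hom(\CL_1,\CL_2)=d$ and $\Ext^1(\CL_1,\CL_2)=0$; correspondingly the two mirror lines meet transversely in exactly $d$ points, which span $HF^0(\Phi(\CL_1),\Phi(\CL_2))$ with $HF^1=0$, and the Maslov/grading conventions recalled in Section \ref{subsec:fukaya} make the Floer degrees agree with the cohomological ones. The natural bases correspond as well: a basis of $H^0(\CL_1^\vee\otimes\CL_2)$ is given by theta functions $\theta[c](z,\tau)$ indexed by residues modulo $d$, and I would put these in bijection with the $d$ intersection points. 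For the skyscraper I would instead use Serre duality together with the local computation of $\Ext^\bullet(\CO_{p_0},\CL)$, matched against the single intersection of a vertical line with a slanted one.

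The essential and most delicate step is to match composition with the Floer product $m_2$. On the $B$-side the Yoneda product $H^0(\CL_1^\vee\otimes\CL_2)\otimes H^0(\CL_2^\vee\otimes\CL_3)\to H^0(\CL_1^\vee\otimes\CL_3)$ is multiplication of theta functions, whose structure constants are governed by the classical theta addition formula. On the $A$-side $m_2$ counts holomorphic triangles with boundary on the three lines, each weighted by $T^{\omega(u)}$ (a power of $q$); since the $L_{(a,b),c}$ are geodesics, these triangles are affine and their areas are quadratic in the summation lattice, so the count produces exactly the Gaussian $q$-series that appear as coefficients in the theta identity. Checking that the two families of structure constants coincide---including the signs from the orientation and spin conventions of Section \ref{subsec:fukaya} and the $\NOV$ bookkeeping---is the theta-identity computation at the heart of \cite{PZ}, and is where essentially all the work lies. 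The higher products $m_{k\ge 3}$ vanish on the relevant tuples of lines for degree reasons, so the $\AI$-structure collapses to this single $m_2$ comparison.

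Finally I would upgrade the correspondence to an equivalence. Full faithfulness on the generators, together with compatibility with $m_2$, shifts and mapping cones, promotes $\Phi$ to a cohomologically fully faithful functor at the level of $Tw(Fu(E))$; passing to cohomology and then to the idempotent completion yields a fully faithful exact functor $\Phi:D^bCoh(X)\to D^\pi Fu(E)$. Essential surjectivity follows because the images of $\CO_X(np_0)$ and $\CO_{p_0}$ split-generate the target: the line bundles $\{\CO_X(np_0)\}$ (hence, via the exact sequence $0\to\CO_X\to\CO_X(p_0)\to\CO_{p_0}\to 0$, also the skyscrapers) generate $D^bCoh(X)$, while their mirrors generate $D^\pi Fu(E)$ under the mapping-class-group action and idempotent splitting. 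The main obstacle throughout is the product comparison of the preceding paragraph; everything else is careful bookkeeping of gradings, holonomies, and the mirror map.
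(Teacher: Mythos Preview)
Your outline of the object and morphism correspondence, and of the $m_2$/theta-identity comparison, is exactly the Polishchuk--Zaslow argument the paper sketches. The genuine gap is the sentence ``the higher products $m_{k\ge 3}$ vanish on the relevant tuples of lines for degree reasons, so the $A_\infty$-structure collapses to this single $m_2$ comparison.'' This is false in the form you need it. The subcategory $\Gamma\mathcal{A}$ of lines $L_{(1,-n)}$ (and its mirror $\Gamma\mathcal{A}^\vee$ of line bundles) is \emph{nonformal}: there are nontrivial higher products once you allow the degree~$1$ morphisms coming from $\mathrm{Ext}^1$ (equivalently, from intersections read in the opposite direction), and these are precisely what encode the extension data you later want to use. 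Matching $H^\bullet$ together with the Yoneda product $m_2$ only gives an equivalence of graded associative categories; it does not produce an $A_\infty$-functor, and without one you cannot pass to $Tw$ and hence not to the derived/split-closed level. In particular the step ``full faithfulness on the generators, together with compatibility with $m_2$, shifts and mapping cones, promotes $\Phi$ to a cohomologically fully faithful functor at the level of $Tw(Fu(E))$'' does not go through as stated.

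The paper handles this (following Abouzaid--Smith) by a different mechanism that avoids constructing the $A_\infty$-functor by hand: one invokes Polishchuk's rigidity theorem that the $A_\infty$-structure on $\Gamma\mathcal{A}^\vee$ is uniquely determined (up to quasi-isomorphism) by its cohomology category together with the single fact that it is not formal. Since the Polishchuk--Zaslow computation already gives $H(\Gamma\mathcal{A})\simeq H(\Gamma\mathcal{A}^\vee)$, it then suffices to check that $\Gamma\mathcal{A}$ is also nonformal; this yields an $A_\infty$-quasi\-equivalence on split-generators, which then extends to $D^\pi Fu(T^2)\simeq D^bCoh(X)$. If you want to keep your direct approach you would have to either (i) exhibit the higher $\mathcal{F}_k$ explicitly and verify the $A_\infty$-functor equations, or (ii) restrict to a directed subcategory where the degree argument genuinely forces $m_{k\ge3}=0$ and then argue separately that this still split-generates---neither of which is the bookkeeping exercise your last paragraph suggests.
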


\begin{figure}
\includegraphics[height=2in]{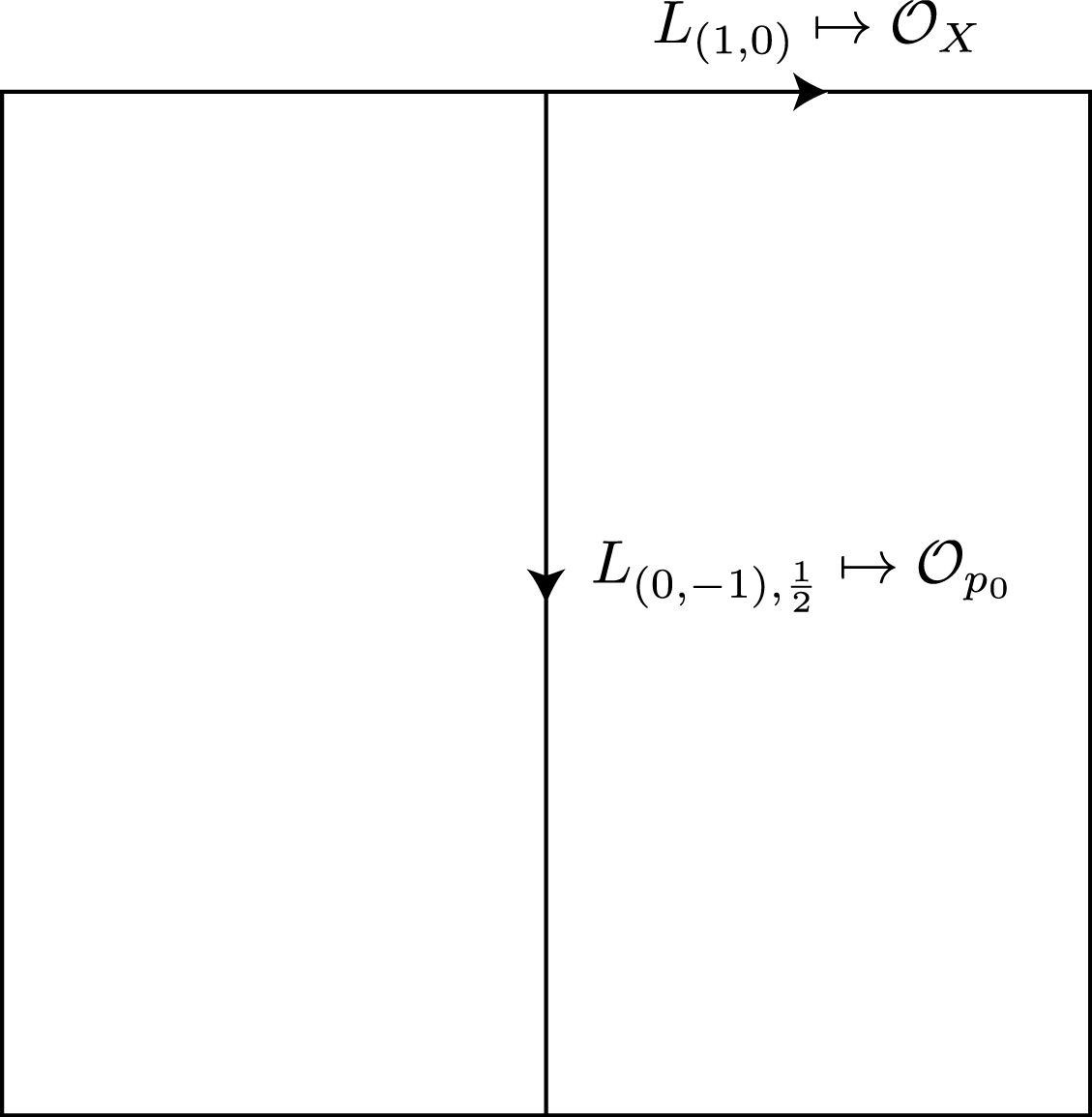}
\caption{Mirror correspondence by Polishchuk-Zaslow}
\end{figure}

The idea of the construction of $\CF$ in \cite{PZ} comes from theta identities. Indeed, theta functions can be understood as morphisms among line bundles on elliptic curves. Here we briefly illustrate their idea. They observe that theta identities given by compositions of morphisms between line bundles(i.e. products of theta functions) also naturally arise in the compositions of morphisms of the Fukaya category. To be more precise, first define a function $\theta[c]$ on $\Lambda^*$,
\[ \theta[c](w):=\sum_{m\in \ZZ}q^{(m+c)^2/2}w^{m+c}.\]
Then there is a degree 1 line bundle $\mathcal{L}$ on the elliptic curve such that its space of global sections is generated by $\theta[0](w)$, and $\mathcal{L}^n$ has global sections generated by $\theta[a/n](w^n)$ where $a \in \{0,1,...,n-1\}.$ The addition formula of theta functions is given by
\[ \theta[0](w)\cdot\theta[0](w)=\theta[0](1)\theta[0](w^2)+\theta[1/2](1)\theta[1/2](w^2).\]

On the other hand, let $\{p\}=L_{(1,0)} \cap L_{(1,-1)}=L_{(1,-1)}\cap L_{(1,-2)}$ and $\{q_0,q_1\}=L_{(1,0)}\cap L_{(1,-2)}$ where $p$ and $q_0$ are origin. $p$ is a morphism from $L_{(1,0)}$ to $L_{(1,-1)}$, or a morphism from $L_{(1,-1)}$ to $L_{(1,-2)}.$ Then 
\[m_2(p,p)=aq_0+bq_1\] is a morphism from $L_{(1,0)}$ to $L_{(1,-2)}$. $a$ and $b$ are given by counts of holomorphic triangles whose vertices are $p$, $p$, $q_0$ and $p$, $p$, $q_1$, respectively. The count is arranged with respect to the area, and it is easy to see that
\[a=2(q^{\frac{1}{2}(1\cdot 2)} + q^{\frac{1}{2}(2\cdot 4)}+ q^{\frac{1}{2}(3 \cdot 6)}+ \cdots)=2(q^1+q^4+q^9+ \cdots)=\sum_{m \in \ZZ}q^{m^2},\]
\[b=2(q^{\frac{1}{2}(\frac{1}{2}\cdot 1)}+q^{\frac{1}{2}(\frac{3}{2}\cdot 3)} + \cdots)=2(q^{1/4}+q^{9/4}+q^{25/4}+\cdots)=\sum_{m\in \ZZ} q^{(m+1/2)^2},\]
so $a=\theta[0](1)$ and $b=\theta[1/2](1).$ It implies that the mirrors of morphisms $p$, $q_0$ and $q_1$ among Lagrangian submanifolds are precisely theta functions which are natural basis of global sections of corresponding line bundles, or equivalently morphisms among them.


In \cite{AS} the mirror of $T^2$ is given by the Tate curve, while in \cite{CHL1} the mirror is $X=\Proj (R/W)$ where $R=\Lambda[x,y,z]$, $W=\phi(x^3+y^3+z^3)+\psi xyz$ for some $\phi,\psi \in \Lambda$ which will be defined later. 

We describe Polishchuk-Zaslow mirror correspondence between $T^2$ and $X$. Again, we take $\CO_X$ as the mirror of $L_{(1,0)}.$ Then the mirror of $L_{(1,-3)}$ is a line bundle of degree 3 whose global sections are generated by $\theta_0:=\theta[0](w^3)$, $\theta_1:=\theta[1/3](w^3)$ and $\theta_2:=\theta[2/3](w^3).$ They are mirrors of Lagrangian intersections of $L_{(1,0)}$ and $L_{(1,-3)}.$ On the other hand, considering $X$ as an abstract elliptic curve, they are used to define an embedding of $X$ into $\Lambda\PP^2$. Since we let $X$ as a projective cubic, the embedding is a priori given, so the mirrors of Lagrangian intersections are $x$, $y$ and $z$ which form a basis of global sections of $\CO_X(1).$ (From now on we write $\CO$ for $\CO_X$ if there is no confusion.) Therefore, we construct the mirror functor $\CF: D^\pi Fu(T^2) \to D^b Coh(X)$ by $\CF(L_{(1,0)}):=\CO$, $\CF(L_{(1,-3)}):=\CO(1)$ and the morphisms $(0,0),(1/3,0),(2/3,0) \in \Hom_{D^\pi Fu(T^2)}(L_{(1,0)},L_{(1,-3)})$ are mapped to $y$, $x$ and $z$ respectively.

\begin{remark}
As a divisor, from $\CO_{\Lambda\PP^2}(1)=(x=0)=(y=0)=(z=0)$, letting $\zeta:=e^{2\pi i/3}$,
\begin{eqnarray*}\CO_X(1) & \sim & [1:-1:0]+[1:-\zeta:0]+[1:-\zeta^2:0] \\
&\sim & [0:1:-1]+[0:1:-\zeta]+[0:1:-\zeta^2] \\
& \sim & [1:0:-1]+[1:0:-\zeta]+[1:0:-\zeta^2]
\end{eqnarray*}
and these nine points $[1:-\zeta^k:0]$, $[0:1:-\zeta^k]$, $[1:0:-\zeta^k]$ $(k=0,1,2)$ are called {\em inflection points} of $X$. Then for any inflection point $p$, $ \CO_X(3p) \cong \CO_X(1).$
\end{remark}
Now, we need an argument of Abouzaid-Smith \cite{AS} to make the above functor exact.
Instead of constructing the functor explicitly on arbitrary objects and morphisms, they compare $\AI$-subcategories of $Fu_0(E)$ and $D^b_\infty Coh(X)$ which consist of split-generators. Let $\Gamma\mathcal{A} \subset Fu_0(E)$ be an $\AI$-subcategory of Lagrangians $L_{(1,n)}$ for $n\in \ZZ$, and $\Gamma\mathcal{A}^\vee \subset D^b_\infty Coh(X)$ be a subcategory consisting of $\CO(np_0)$. They use Polishchuk's theorem on the $\AI$-structure of $\Gamma\mathcal{A}^\vee$: it is uniquely determined by its cohomology category and its lack of formality. By \cite{PZ}, $H(\Gamma\mathcal{A}) \simeq H(\Gamma\mathcal{A}^\vee).$ Then they prove that $\Gamma\mathcal{A}$ is also nonformal, so that there is an $\AI$-quasiequivalence between $\Gamma\mathcal{A}$ and $\Gamma\mathcal{A}^\vee$. Since the equivalence is between subcategories which consist of split-generators, it extends to an $\AI$-quasiequivalence between whole $\AI$-categories, and taking its cohomology we get an exact equivalence between triangulated categories.  




\section{CY-LG mirror symmmetry(graded localized mirror functors)}\label{sec:CYLG}
We review the construction of the localized mirror functor due to \cite{CHL1} for $T^2$ equipped with $\ZZ/3$-action, by rotation as in Figure \ref{333elliptic}. Let $\Omega=dz$ be the holomorphic volume form on $T^2$, where $z$ is the complex coordinate of $\mathbb{C}$. 

\begin{figure}
\includegraphics[height=3.5in]{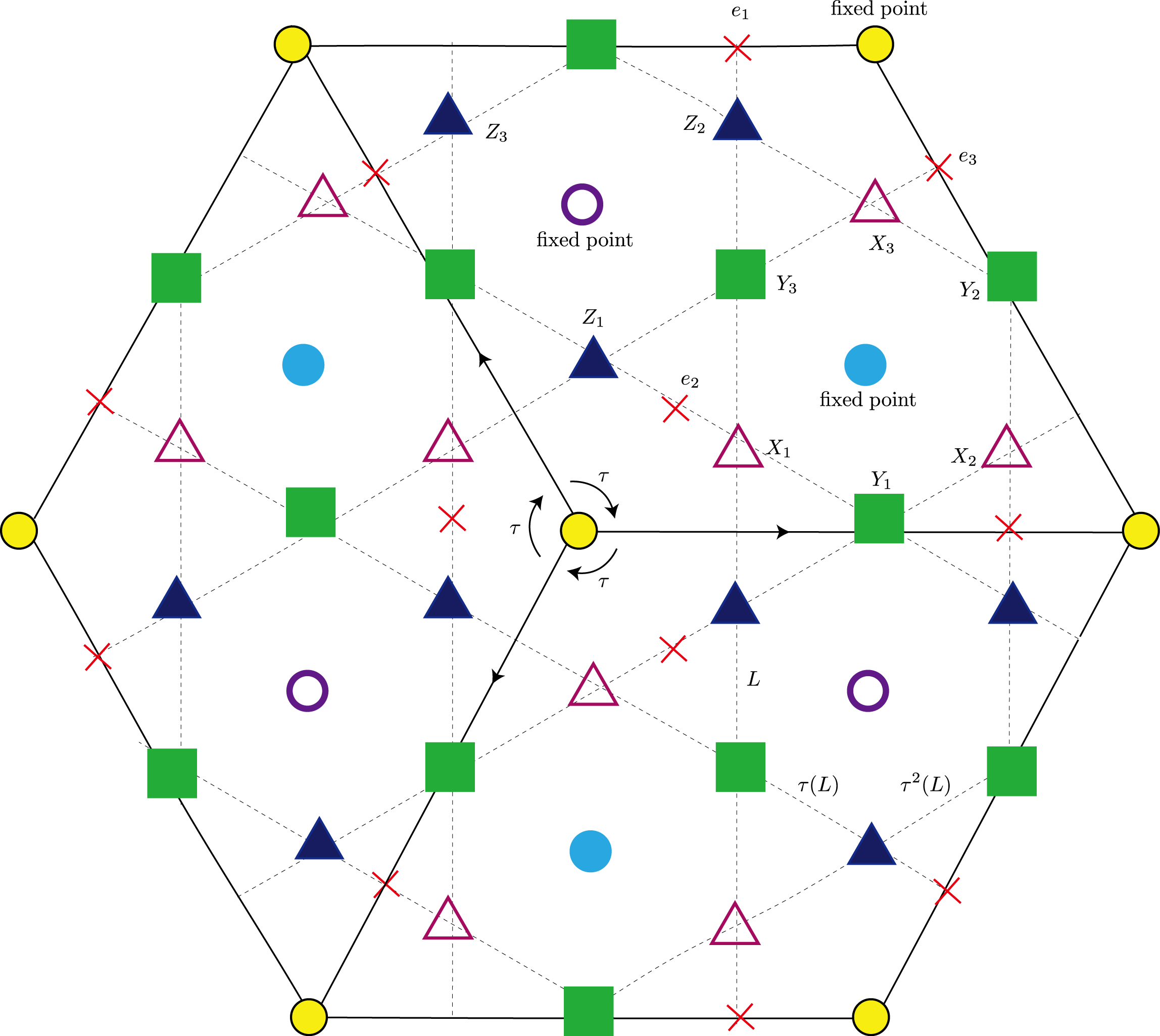}
\caption{$\ZZ/3$-action, $\LL$ on $E$ and self-intersection points of $\LL$. A parallelogram whose sides are solid lines is a fundamental domain of $T^2$ and the union of three dotted lines is $\LL$. $e_1$, $e_2$ and $e_3$ represent fundamental classes of $L$, $\tau(L)$ and $\tau^2(L)$ respectively, and $e=e_1+e_2+e_3$ is the unit of $\LL$.}
\label{333elliptic}
\end{figure}

If we consider grading structure on $E$, then the above $\ZZ/3$-action does not preserve it, so we do not have an induced action on the graded Fukaya category, but there is an action up to shifts, i.e. there is a homomorphism $\ZZ/3 \to Auteq(D^\pi Fu(T^2))/\ZZ$ where $\ZZ$ is generated by the shift functor $[1]$.

Let $\xi=e^{2\pi i/3}$, $L$ be an oriented Lagrangian given by the line $\frac{1+\xi}{2}+\sqrt{-1}t$ with orientation upward and phase $\frac{\pi}{2}$, i.e. $\displaystyle \frac{\Omega(V)}{|\Omega(V)|}=e^{i\cdot\frac{\pi}{2}}$ for any nonzero positively oriented vector field $V$ of $L$. We construct an immersed Lagrangian 
\[\LL:=L \cup \tau(L) \cup \tau^2(L)\]
and equip $L$, $\tau(L)$ and $\tau^2(L)$ with nontrivial spin structures. We define the phase of $\tau(L)$ as $\pi/2-2\pi/3$, and that of $\tau^2(L)$ as $\pi/2-4\pi/3$, i.e. $\tau$ is the rotation by $-2\pi/3$. For simplicity, we call $\LL$ the (lift of) Seidel Lagrangian.
\subsection{Ungraded localized mirror functor}
In this subsection we forget grading structures and just consider the $\ZZ/2$-grading. As in Figure \ref{333elliptic}, we specify three generators $e_1$, $e_2$ and $e_3$ of Morse complexes of $L$, $\tau(L)$ and $\tau^2(L)$ representing fundamental classes, and $e:=e_1+e_2+e_3.$ Then $e$ is the unit of $\LL$, and we also have the following important theorem.
\begin{theorem}[\cite{CHL1}]
Let $X$, $Y$ and $Z$ be immersed generators of $\bar{\LL} \subset T^2/(\ZZ/3)$, and $X_i$, $Y_i$ and $Z_i$ for $i=1,2,3$ be their liftings(see Figure \ref{333elliptic}). Then $b=x(X_1+X_2+X_3)+y(Y_1+Y_2+Y_3)+z(Z_1+Z_2+Z_3)$ for any $x,y,z\in \Lambda$ is a weak bounding cochain of $\LL$. Furthermore, $PO(\LL,b)= \phi(x^3-y^3+z^3)-\psi xyz$ where $\phi$ and $\psi$ are power series of $q_\alpha=T^{\Delta_{xyz}}$, such that $\Delta_{xyz}$ is the area of minimal $xyz$ triangle and 
\[ \phi(q_\alpha)=\sum_{k=0}^\infty (-1)^{k+1}(2k+1)q_\alpha^{(6k+3)^2},\]
\[ \psi(q_\alpha)=-q_\alpha+ \sum_{k=1}^\infty (-1)^{k+1}((6k+1)q_\alpha^{(6k+1)^2}-(6k-1)q_\alpha^{(6k-1)^2}).\]
\end{theorem}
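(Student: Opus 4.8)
The plan is to evaluate the left-hand side of the weak Maurer--Cartan equation (\ref{eq:weakMC}) directly, writing $m(e^b)=\sum_{k\geq 0}m_k(b,\dots,b)$ and classifying the rigid holomorphic polygons bounded by $\LL$ whose corners are the odd immersed generators $X_i,Y_i,Z_i$ of Figure \ref{333elliptic}. Since $b$ has odd degree and each $m_k$ carries degree $2-k$, every term $m_k(b,\dots,b)$ is even, so $m(e^b)$ lies in $CF^{even}(\LL,\LL)$; this space is spanned by the units $e_1,e_2,e_3$ together with the degree-$0$ immersed generators $\bar X_i,\bar Y_i,\bar Z_i$ that sit at the same self-intersections as $X_i,Y_i,Z_i$. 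The statement therefore amounts to two tasks: (A) showing that the $\bar X_i,\bar Y_i,\bar Z_i$-components of $m(e^b)$ vanish and that the three $e_i$-coefficients coincide, so that indeed $m(e^b)=PO(\LL,b)\cdot e$; and (B) computing that common coefficient.

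For (B) I would use that $\PP^1_{3,3,3}$ is a \emph{Euclidean} orbifold, since $\tfrac13+\tfrac13+\tfrac13=1$, and is uniformized by $\CC$. Pulling $\bar{\LL}$ back to this flat universal cover turns it into the $1$-skeleton of a triangular lattice: three families of parallel geodesics meeting at $60^\circ$. Because the metric is flat and the Lagrangian arcs are straight, every rigid polygon contributing to the unit is a genuine Euclidean triangle $T_n$ similar to the minimal $xyz$-triangle and scaled by an integer factor $n$, hence of area $n^2\,\Delta_{xyz}$, contributing $q_\alpha^{n^2}=T^{n^2\Delta_{xyz}}$. The corner types of $T_n$ are governed by $n \bmod 3$: when $\gcd(n,3)=1$ the three corners are one each of $X,Y,Z$ and the triangle carries the monomial $xyz$, whereas when $3\mid n$ all three corners are of a single type and the monomial is $x^3$, $y^3$ or $z^3$. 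Summing $q_\alpha^{n^2}$ over all triangles of each type, weighted by the number of lattice-inequivalent triangles of size $n$ (which grows linearly, equal to $n$ in the $xyz$-case and to $n/3$ in the $3\mid n$ case, the latter reduction coming from the residual $\ZZ/3$-symmetry of those triangles), is what reproduces the exponents $(6k\pm1)^2$ in $\psi$ and $(6k+3)^2$ in $\phi$ with the indicated integer multiplicities.

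The sign bookkeeping is where the genuine work lies, and I expect it to be the main obstacle. Using the orientation rules recalled in Subsection \ref{subsec:fukaya} --- the factors $(-1)^{|p_i|}$ and $(-1)^{|p_k|+|q|}$ at the corners together with the factor $(-1)^{l}$ recording the $l$ times $\partial u$ crosses the nontrivial spin points $\circ$ --- I would determine, uniformly in $n$, how often the boundary of $T_n$ passes through the marked spin points placed on $L,\tau(L),\tau^2(L)$. This single count must simultaneously account for three features of the answer: it forces every \emph{even}-$n$ triangle to cancel against a partner of equal area, so that only odd squares $n^2$ survive; it produces the alternating factor $(-1)^{k+1}$; and it produces the relative minus sign that makes the cubic part $\phi(x^3-y^3+z^3)$ rather than $\phi(x^3+y^3+z^3)$, an asymmetry traceable to the chosen phases and orientations of $L$, $\tau(L)$ and $\tau^2(L)$. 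Reconciling this spin-sign analysis with the multiplicity count of the previous step, uniformly across all sizes $n$, is the delicate heart of the computation.

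Finally, for (A) I would argue that the $\bar X_i,\bar Y_i,\bar Z_i$-components vanish using the dihedral symmetry of the configuration: the orientation-reversing reflection preserving $\bar{\LL}$ sends any polygon with all input corners among $X,Y,Z$ and output at an even immersed generator to a distinct polygon of the same area and opposite sign, so these contributions cancel order by order in the area filtration, while the residual $\ZZ/3$-rotation identifies the three $e_i$-coefficients and hence yields a single well-defined value $PO(\LL,b)$. Convergence of the resulting series in the Novikov topology is automatic because the areas $n^2\Delta_{xyz}$ tend to infinity; this growth is also what allows $b$ to serve as a weak bounding cochain for arbitrary $x,y,z\in\Lambda$, not merely for $x,y,z$ of positive valuation.
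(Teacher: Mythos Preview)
The paper does not prove this theorem: it is quoted from \cite{CHL1} as a black box and immediately used (``Let $W:=PO(\LL,b)$\dots'') to set up the graded localized mirror functor. There is therefore no in-paper argument to compare your proposal against.

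That said, your outline is essentially the strategy that \cite{CHL1} itself carries out. The identification of contributing polygons with Euclidean triangles $T_n$ of area $n^2\Delta_{xyz}$ on the flat cover, the trichotomy by $n\bmod 3$ separating the $xyz$-terms from the cubes, the linear multiplicity $n$ (respectively $n/3$) producing the coefficients $6k\pm1$ and $2k+1$, the cancellation of even-$n$ triangles and the alternating sign coming from boundary crossings of the nontrivial spin points, and the reflection symmetry killing the $\bar X_i,\bar Y_i,\bar Z_i$-outputs---all of this matches the computation in \cite{CHL1}, Section~7. You are right that the sign bookkeeping is where the content lies; your plan names the correct mechanism (the rule from Subsection~\ref{subsec:fukaya} combining corner degrees with spin-crossing parity) but stops short of executing it, so as written this is an accurate roadmap rather than a proof. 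One small caution: your final sentence about arbitrary $x,y,z\in\Lambda$ glosses over why the sum $m(e^b)$ converges when $b\notin F^+ CF(\LL,\LL)$; the point is that there is no $m_0$ and only finitely many polygons below any given area, so the usual positivity hypothesis on $b$ can be dropped here, but this deserves a sentence rather than being folded into ``automatic.''
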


Let $W:=PO(\LL,b).$ Suppose that $L'$ is an unobstructed Lagrangian submanifold. $CF((\LL,b),L')$ is generated by even and odd intersections(for example see Figure \ref{4x4mf1}). If we compute $m_1$ on it, i.e. count strips between even and odd generators, then $m_1^2=W\cdot \id_{n\times n}$ where $n$ is the number of odd(or even) generators, namely $(CF((\LL,b),L'),m_1)$ is a matrix factorization of $W$ of rank $n$. More precisely, we have
\begin{theorem}[\cite{CHL1}]
Let $\calC$ be an $\AI$-category and $(A,b)$ be a weakly unobstructed object with weak bounding cochain $b$. Let $\calC_0$ be the $\AI$-subcategory of unobstructed objects. Define a collection of maps $\{\locmir_*^{(A,b)}\}$ from $\calC_0$ such that
\begin{itemize}
 \item $\locmir_0^{(A,b)}$ sends an object $B$ to the matrix factorization $(hom((A,b),B),m_1)$.
 \item $\locmir_1^{(A,b)}(x)$ is defined as \[(-1)^* m_2^{b,0}(\cdot, x):(hom((A,b),B_1),m_1) \to (hom((A,b),B_2),m_1)\] where $x \in hom(B_1,B_2).$
 \item $\locmir_k^{(A,b)}(x_1,...,x_k)$ is defined by
 \[ \CF_k^{(A,b)}(x_1,...,x_k)(y)=(-1)^*m_{k+1}^{b,0,...,0}(y,x_1,...,x_k).\]
\end{itemize}
Then $\{\locmir_*^{(A,b)}\}: \calC_0 \to MF(PO(A,b))$ is an $\AI$-functor where $MF(PO(A,b))$ is the differential $\ZZ/2$-graded category of matrix factorizations of $PO(A,b)$.

\end{theorem}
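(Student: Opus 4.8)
The plan is to verify the defining equation of an $\AI$-functor directly: evaluate both sides on an arbitrary test element $y\in hom((A,b),B_0)$, expand everything into iterated $\calC$-operations with insertions of $b$, and recognise the resulting identity as the sum over $l\ge 0$ of the $\AI$-relations of $\calC$ applied to the string $(b^{l},y,x_1,\dots,x_k)$, using the weak Maurer--Cartan equation $m(e^b)=W\cdot e_A$ (with $W:=PO(A,b)$) and the unit axiom. (Here and below I treat the objects of $\calC_0$ as carrying the zero bounding cochain, as in the statement, so $m_0^{B_i}=0$; the general case only requires inserting the chosen $b_{B_i}$ in the obvious places.) As a preliminary, $\locmir_0$ genuinely lands in $MF(W)$: the $\AI$-relation of $\calC$ applied to $(b^{l},x)$, summed over $l$, collapses --- by the weak Maurer--Cartan equation and the unit axiom, only $m_2(W e_A,x)$ surviving among the pure $b$-block terms --- to $(m_1^{b,0})^2(x)=\pm W\cdot x$. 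This is the same computation that proved $(m_1^{b_0,b_1})^2=0$ earlier, with the right-hand eigenvalue $0$ replaced by $W$; hence $(hom((A,b),B),m_1^{b,0})$ is a $\ZZ/2$-graded matrix factorization of $W$.

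For the functorial relations, abbreviate $\mathfrak n_k(y;x_1,\dots,x_k):=m_{k+1}^{b,0,\dots,0}(y,x_1,\dots,x_k)=\sum_{l\ge 0}m_{k+1+l}(b^{l},y,x_1,\dots,x_k)$, so that $\locmir_k(x_1,\dots,x_k)(y)=(-1)^{*}\mathfrak n_k(y;x_1,\dots,x_k)$. Since $MF(W)$ is a \emph{dg} category, on the target side only $m_1^{MF}$ (the commutator with the differential $m_1^{b,0}$) and $m_2^{MF}$ (composition of module maps) are nonzero. Therefore the $\AI$-functor equation, evaluated on $y$, asserts that a signed sum of terms ``$\mathfrak n$ with an inner $m^{\calC}$'' equals $\pm\, m_1^{b,0}\big(\mathfrak n_k(y;\vec x)\big)\ \pm\ \mathfrak n_k\big(m_1^{b,0}(y);\vec x\big)\ +\ \sum_{j=1}^{k-1}\pm\,\mathfrak n_{k-j}\big(\mathfrak n_j(y;x_1,\dots,x_j);x_{j+1},\dots,x_k\big)$, the first two terms being $m_1^{MF}(\locmir_k(\vec x))$ and the remaining ones the $m_2^{MF}(\locmir_j,\locmir_{k-j})$.

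To establish this identity I would fix $l$, write down the $\AI$-relation of $\calC$ for $(b^{l},y,x_1,\dots,x_k)$, sum over $l\ge 0$ (permissible since $b\in F^+hom^1$, so modulo any $T^N$ only finitely many terms contribute), and sort the terms by which of the distinguished inputs $y,x_1,\dots,x_k$ the inner operation $m^{\calC}_q$ absorbs:
\begin{itemize}
\item it absorbs only $b$'s (including $m_0^A$): summing over $l$ turns the inner block into $\sum_q m_q(b^q)=W\cdot e_A$, and for $k\ge 1$ the unit axiom annihilates every such term (for $k=0$ one recovers the matrix-factorization condition $(m_1^{b,0})^2=\pm W\cdot\id$);
\item it absorbs $x_i,\dots,x_j$ and nothing else: summing over the remaining outer $b$'s produces exactly the left-hand side $\pm\,\mathfrak n_{k+i-j}\big(y;x_1,\dots,x_{i-1},m^{\calC}_{j-i+1}(x_i,\dots,x_j),x_{j+1},\dots,x_k\big)$ of the functor equation;
\item it absorbs some $b$'s together with $y,x_1,\dots,x_j$ for some $0\le j\le k$: summing over inner and outer $b$'s turns the inner block into $\mathfrak n_j(y;x_1,\dots,x_j)$ and the outer into $\mathfrak n_{k-j}(\,\cdot\,;x_{j+1},\dots,x_k)$, giving the terms of the right-hand side, with $j=0$ and $j=k$ producing the two $m_1^{b,0}$-terms.
\end{itemize}
Since the $\AI$-relation of $\calC$ is identically zero, these three families cancel, which is precisely the $\AI$-functor equation for $\locmir^{(A,b)}$.

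The one genuinely delicate point --- and the only real obstacle --- is the sign bookkeeping: one must pin down the auxiliary signs $(-1)^{*}$ in the definitions of $\locmir_1,\dots,\locmir_k$ and in the dg-structure of $MF(W)$ so that the Koszul signs of the $\AI$-functor equation match those produced by the $\AI$-relations of $\calC$ under the summation over $b$-insertions; the insertions of $b$ are themselves sign-neutral because $b$ has degree $1$. I expect this to be a routine, if somewhat lengthy, verification of the type carried out in \cite{CHL1} (cf. \cite{FOOO}); everything else in the argument is formal.
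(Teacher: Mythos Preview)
The paper does not actually prove this theorem: it is quoted from \cite{CHL1} and stated without proof, so there is no ``paper's own proof'' to compare against beyond the short computation of $(m_1^{b_0,b_1})^2=0$ that appears earlier and which you correctly identify as the $k=0$ case.

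That said, your outline is the standard curved-Yoneda argument of \cite{CHL1} and is essentially correct: sum the $\AI$-relations of $\calC$ for $(b^{l},y,x_1,\dots,x_k)$ over $l\ge 0$, then sort terms by which inputs the inner operation swallows. The classification into (i) pure-$b$ blocks (killed by the weak Maurer--Cartan equation and the unit axiom for $k\ge 1$, producing the matrix-factorization condition for $k=0$), (ii) blocks containing only $x_i,\dots,x_j$ (giving the left-hand side of the functor equation), and (iii) blocks containing $y$ together with an initial segment of the $x$'s (giving the $m_1^{MF}$ and $m_2^{MF}$ terms on the right) is exactly how the proof goes. Your acknowledgement that the only nontrivial remaining step is fixing the signs $(-1)^{*}$ so that the Koszul signs match is also accurate; in \cite{CHL1} the sign is $(-1)^{*}=(-1)^{|y|'(|x_1|'+\cdots+|x_k|')}$ (with $|\cdot|'$ the shifted degree), and the verification is indeed routine once that choice is made.
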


We call the above functor $\{\locmir_*^{(A,b)}\}$ the (non-graded) {\em localized mirror functor} at $(A,b)$.

\subsection{Graded localized mirror functor}
Now we explain the construction of \cite{CHL2} in the case of cyclic group action (while
the construction there of is for any finite group action). We also refer to the Chapter 5 of \cite{CHL1} for more details. Let $(M,\omega,\Omega)$ be a Calabi-Yau manifold where $\Omega$ is the holomorphic volume form, and suppose $\ZZ/d\ZZ$ acts on $M$. Consider {\em $\frac{1}{d}$-grading} on an immersed Lagrangian, i.e. define a phase function $\theta_L^{1/d}: L \to \RR$ such that $\Omega^{\otimes d}(T_p L)=e^{\pi i \theta_L^{1/d}(p)}.$ If there is such a phase function, then $L$ is called {\em $\frac{1}{d}$-graded.} Furthermore if it can be equipped with a constant phase function, then it is called {\em special Lagrangian} with respect to the $\frac{1}{d}$-grading.

\begin{prop}[\cite{CHL2}]
On the full $\AI$-subcategory of $\frac{1}{d}$-graded Lagrangians, the $\AI$-multiplication has degree $2-k$ under $\frac{1}{d}$-grading.
\end{prop}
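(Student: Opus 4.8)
\emph{Proof proposal.} The assertion is equivalent to the index identity: if $u$ is a holomorphic polygon with input corners $p_1 \in L_0\cap L_1,\dots,p_k \in L_{k-1}\cap L_k$ and output corner $q \in L_0\cap L_k$, all $L_i$ being $\frac1d$-graded, then
\[ \mathrm{ind}(u) = \deg_{1/d}(q) - \deg_{1/d}(p_1) - \cdots - \deg_{1/d}(p_k), \]
where $\deg_{1/d}$ is the degree read off from the $\frac1d$-grading. Granting this, $m_k$ counts rigid polygons, i.e.\ those with $\dim\mathcal{M}(p_1,\dots,p_k;q;\beta) = k-2+\mathrm{ind}(\beta)=0$, so $\mathrm{ind}(u)=2-k$ on every polygon contributing to $m_k$, and the identity above then says precisely that $m_k$ raises the $\frac1d$-degree by $2-k$. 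Thus the whole content is the index identity, and the plan is to prove it by the same Maslov-index bookkeeping already recalled for the ordinary ($\Omega$-graded) case, carrying the factor $d$ through consistently.

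First I would fix the definition of $\deg_{1/d}(p)$ for $p\in L\cap L'$ with $L,L'$ $\frac1d$-graded: exactly as for the ordinary degree, but with the canonical Lagrangian path from $T_pL$ to $T_pL'$ selected using the $\frac1d$-phase functions $\theta^{1/d}_L,\theta^{1/d}_{L'}$ in place of $\theta_L,\theta_{L'}$ — take the path of Lagrangian subspaces along which the $\Omega^{\otimes d}$-phase changes monotonically from $e^{\pi i\theta^{1/d}_L(p)}$ to $e^{\pi i\theta^{1/d}_{L'}(p)}$, concatenate the positive definite path from $T_pL'$ to $T_pL$, and take the winding number, with a normalization factor $\frac1d$ relative to the naive count chosen so that $\deg_{1/d}(p)\in\ZZ$ and so that $\deg_{1/d}(p)=\deg(p)$ whenever a compatible honest $\Omega$-grading exists. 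Then I would recall that $\mathrm{ind}(u)$ is the Maslov index of the loop of Lagrangian subspaces obtained by travelling along $\partial u$ (negative definite paths at the $p_i$, edge paths $l_i$ on $L_i$, positive definite path at $q$), decompose this loop by additivity of the Maslov index into edge contributions and corner contributions, and observe that since $\Omega^{\otimes d}$ is $\ZZ/d$-invariant it descends to a nowhere-zero section of $K^{\otimes d}$ on the target, so the $\Omega^{\otimes d}$-phase change along each edge path $l_i$ is a well-defined real number; the edge sum telescopes, and the corner terms together with the definite paths reassemble, by the definition of $\deg_{1/d}$, into $\deg_{1/d}(q)$ minus the $\deg_{1/d}(p_i)$. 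This is formally the same computation as the one recalled earlier for $\deg(q)-\sum\deg(p_i)$, with $\Omega$-phases replaced by $\frac1d$-phases throughout.

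The step that needs genuine care is matching normalizations: the compensating factor $\frac1d$ in $\deg_{1/d}$ must be threaded through the Maslov-index decomposition so that the telescoping edge sum and the corner terms combine with the correct coefficients, and one must check that the positive/negative definite paths and the "$d$-fold faster" winding of the $\Omega^{\otimes d}$-phase do not introduce a spurious multiple of $d$. I expect the cleanest organization is to verify the identity first on a small neighborhood of the image of $u$, where $\Omega^{\otimes d}$ admits a $d$-th root and the statement reduces verbatim to Seidel's $\Omega$-graded index formula, and then to note that both sides are independent of the local choice of $d$-th root, being defined intrinsically from the globally defined datum $\Omega^{\otimes d}$ together with the $\frac1d$-gradings; alternatively one simply cites the corresponding computation in \cite{CHL2}.
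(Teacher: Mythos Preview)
The paper does not supply a proof of this proposition: it is stated with a citation to \cite{CHL2} and the text moves directly to the definition of $\deg^{1/d}$ without argument. There is therefore no in-paper proof to compare your proposal against.

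Your outline is the standard one and is correct in spirit. One small correction: by the paper's formula~\eqref{fracgrading}, $\deg^{1/d}(p)$ lies in $\frac{1}{d}\ZZ$, not in $\ZZ$ as you assert (in the $T^2$ example the odd immersed generators of $\LL$ have $\deg^{1/3}=1/3$). This does not damage your argument --- the right-hand side $\deg^{1/d}(q)-\sum_i\deg^{1/d}(p_i)$ is nonetheless an integer, since the $\theta^{1/d}_{L_i}$ contributions telescope along the edges of $\partial u$ and the remaining corner terms assemble into integer Maslov data --- but your normalization remark should be adjusted accordingly.
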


Under $\frac{1}{d}$-grading, the degree of an intersection $p$ between $\frac{1}{d}$-graded Lagrangians is defined as 
\begin{equation}\label{fracgrading}
\deg^{1/d}(p):=\frac{1}{d}(\theta_{L_2}^{1/d}(p)-\theta_{L_1}^{1/d}(p)+\theta^d(\wideparen{L_2 L_1})|_p) \in \frac{1}{d}\ZZ
\end{equation}
where $p$ is considered as a morphism $L_1 \to L_2$ and $\pi \theta^d(\wideparen{L_2 L_1})|p$ is the phase angle of the positive definite path from $T_p L_2$ to $T_p L_1$ measured by $\Omega^{\otimes d}(p).$ 

We go back to our example $T^2$ on which $\ZZ/3$ acts. The Seidel Lagrangian $\LL$ can be made into a special Lagrangian with respect to $\frac{1}{3}$-grading, defining the phase $\theta_\LL^{1/3}=-\frac{1}{2},$ for example. Then each immersed point which is considered to be a morphism $\tau^2(L) \to \tau(L)$, $\tau(L) \to L$ or $L \to \tau^2(L)$(i.e. an odd-degree morphism in $\ZZ/2$-grading) has degree $1/3$ according to the definition (\ref{fracgrading}).

To make $b=x\sum_{i=1}^3 X_i + y\sum_{i=1}^3 Y_i + z\sum_{i=1}^3 Z_i$ of degree 1, let $\deg^{1/3}(x)=\deg^{1/3}(y)=\deg^{1/3}(z)=2/3$. Suppose that an unobstructed(or $\frac{1}{3}$-graded) Lagrangian $L'$ is given. The morphisms $\LL \to L'$ are equipped with $\frac{1}{3}$-gradings, and since $b$ has degree 1, $W=PO(\LL,b)$ is of degree 2. $CF((\LL,b),L')$ is equipped with $m_1$ of degree 1 and $m_1^2=W \cdot \id_{n\times n}$ for some $n$.

Now fix a labelling on components of $\LL$ by elements of $\ZZ/3$ according to the group action. More precisely, for example fix $L^0=L$ and according to the $\ZZ/3$-action $L^{-1}=\tau(L)$ and $L^{-2}=\tau^2(L)$(here 0, $-1$ and $-2$ are elements of $\ZZ/3$). Then fix a character $\ZZ/3 \to U(1)$, $-j \mapsto e^{-\pi i \cdot 2j/3}$.
Take an unobstructed Lagrangian $L'$ again, and define a pair
\[\displaystyle\big(\bigoplus_{p_i \in CF((\LL,b),L)} A_{g_i}[\deg p_i],m_1^{(\LL,b),L}\big),\] $p_i \in L^{g_i} \cap L'$ and $\deg p_i$ is the usual $\ZZ$-grading by $\Omega$.  

We explain the expressions above. Given a polynomial ring $R$ and a (quasi-)homogeneous polynomial $W$ of degree $d$, we define a category $Tw_\ZZ(R_W \# \ZZ/d)$ which consists of objects as pairs $(\bigoplus_{g_i \in \ZZ/d}A_{g_i}[\sigma_i],\delta)$. $ \Hom(A_{g},A_h)$ is a vector space consisting of $f$ which is required to satisfy \[\widetilde{\deg}f:=\deg^{1/d}f+(\alpha_h-\alpha_g) \in 2\ZZ.\] Here we fix a character $g \mapsto e^{\pi i \alpha_g}$.
We give a $\ZZ$-grading on $\Hom$ spaces by $\widetilde{{\rm deg}}$. $\delta$ is a degree 1 endomorphism of $\bigoplus_{g_i \in \ZZ/3}A_{g_i}[\sigma_i]$ where the degree of a morphism $A_g[\sigma]\to A_h[\sigma']$ is shifted by $\sigma'-\sigma$ from the degree of the morphism $A_g \to A_h$. The upshot is the following:

\begin{theorem}[\cite{CT}]\label{CTtheorem}
$Tw_\ZZ(R_W \# \ZZ/d)$ is equivalent to the category of graded matrix factorizations of $W$ by the correspondence:
\[ \big(\bigoplus_i A_{g_i}[k_i],\delta\big) \mapsto (\cdots \to E_0 \to E_1 \to \cdots)\]
where \[ E_0=\bigoplus_{k_i:{\rm even}}R\big(-\frac{k_i d}{2}-\frac{d}{2}\alpha_{g_i}\big),\]
\[ E_1=\bigoplus_{k_i:{\rm odd}} R\big(\frac{(1-k_i)d}{2}-\frac{d}{2}\alpha_{g_i}\big)\]
and the structure maps $p_i: E_i \to E_{i+1}$ are given by the corresponding matrix defined by $\delta$.
\end{theorem}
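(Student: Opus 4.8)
The plan is to realize the stated assignment as an explicit dg-functor $F : Tw_\ZZ(R_W \# \ZZ/d) \to MF_\ZZ(W)$ and then prove it is an equivalence. The conceptual starting point is that a twisted complex over the curved algebra $R_W$ is a free $R$-module $E$ equipped with a degree-$1$ endomorphism $\delta$ solving the (curved) Maurer--Cartan equation $m_0 + \sum_{k \geq 1} m_k(\delta, \dots, \delta) = 0$. Since the only nonzero operations on $R_W$ are the curvature $m_0 = W$ and the multiplication $m_2$, this collapses to $\delta^2 = W \cdot \id$ up to sign. Splitting $E$ by the parity of the shift parameters $k_i$ into $E_0 = \bigoplus_{k_i \text{ even}}(\cdots)$ and $E_1 = \bigoplus_{k_i \text{ odd}}(\cdots)$ and writing $\delta$ in off-diagonal block form with entries $p_0 : E_0 \to E_1$ and $p_1 : E_1 \to E_0$, the equation $\delta^2 = W \cdot \id$ is exactly $p_1 p_0 = W \cdot \id$ and $p_0 p_1 = W \cdot \id$. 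Thus my first step is to check that the twisted-complex equation on the source is literally the defining relation of a matrix factorization on the target.

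The second and central step is the grading bookkeeping: I must show that the three pieces of degree data on the source --- the $\ZZ$-shift $[k_i]$, the $\tfrac1d$-grading $\deg^{1/d}$, and the character exponent $\alpha_{g_i}$ --- assemble into the single internal $R$-grading recorded by the shift formulas for $E_0$ and $E_1$. The mechanism is that multiplication by a weight-$w$ monomial has $\deg^{1/d} = 2w/d$, so that $w = \tfrac d2 \deg^{1/d}$; this is precisely why the factors $\tfrac d2$ appear, converting $\tfrac1d$-degrees and the character shift into internal weights. Concretely I would impose that each entry of $\delta$ have $\widetilde{\deg} = \deg^{1/d} + (\alpha_h - \alpha_g)$ equal to $1$, translate this into the requirement that the corresponding $p_i$ be a degree-preserving map between the twisted free modules, and solve for the twist that makes this hold; the solution is forced to be $-\tfrac{k_i d}{2} - \tfrac d2 \alpha_{g_i}$ on the even part and $\tfrac{(1-k_i)d}{2} - \tfrac d2 \alpha_{g_i}$ on the odd part. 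I must also verify the quasi-periodicity $K^\cdot[2] = K^\cdot(d)$: increasing $k_i$ by $2$ changes the twist by $d$ (up to the sign convention for $R(n)$), which is exactly what upgrades the $\ZZ/2$-grading to a genuine $\ZZ$-grading consistent on both sides, and uses that $\deg W = d$.

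Having matched objects and gradings, the third step is functoriality and full faithfulness. A homogeneous map $f : A_g \to A_h$ of $\widetilde{\deg} = j$ is sent to the evident $R$-linear map between the corresponding twisted free modules; I verify that $F$ respects composition and intertwines the two Hom-differentials, namely $d(f) = \delta_L \circ f - (-1)^{|f|} f \circ \delta_K$ on the source and $(d(f^\cdot))^i = l^{i+j} \circ f^i + (-1)^j f^{i+1} \circ k^i$ on the target. Full faithfulness then reduces to the observation that, degree by degree, $\Hom_{Tw_\ZZ}$ and $\Hom_{MF_\ZZ(W)}$ are the same free $R$-module, which is immediate once the shift formulas are in place since both are copies of $R$ with identical twist. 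Essential surjectivity follows by inverting those formulas: the parity of a generator of a given matrix factorization records whether it lies in $E_0$ or $E_1$, the residue of its twist modulo $d$ recovers the sector $g_i \in \ZZ/d$, and the remaining integer recovers $k_i$.

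I expect the main obstacle to be the grading reconciliation of the second step, in particular handling the possibly non-integral character exponents $\alpha_{g_i}$ and the half-integer quantities $\tfrac d2 \alpha_{g_i}$. One must check that these shifts are nevertheless compatible with the $\ZZ$-grading of $MF_\ZZ(W)$ --- that every entry of $\delta$ lands in a genuine degree-$0$ morphism space --- and that the base-category constraint $\widetilde{\deg} f \in 2\ZZ$ is exactly what makes the even/odd bookkeeping well-defined. A secondary but unavoidable subtlety is pinning down the signs in the curvature equation $\delta^2 = \pm W \cdot \id$ and in the two Hom-differentials so that $F$ is a strict dg-functor; this is routine but should be settled before full faithfulness is asserted.
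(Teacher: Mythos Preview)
The paper does not contain its own proof of this statement: Theorem~\ref{CTtheorem} is quoted from \cite{CT} and is immediately followed by Proposition~\ref{CHL2theorem} with no intervening argument. So there is nothing in the present paper to compare your proposal against.

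That said, your outline is the standard route and is essentially the argument in \cite{CT}. The only point I would flag is in your essential surjectivity step. You write that, given a graded matrix factorization, ``the residue of its twist modulo $d$ recovers the sector $g_i \in \ZZ/d$''. This presumes that the numbers $\tfrac{d}{2}\alpha_{g_i}$, as $g_i$ ranges over $\ZZ/d$, hit every residue class modulo $d$; equivalently, that the chosen character $g \mapsto e^{\pi i \alpha_g}$ is a \emph{generator} of the character group. For an arbitrary character this fails (e.g.\ the trivial character gives $\alpha_g \equiv 0$ and you can only produce twists divisible by $d$), so the inverse construction as you have stated it would not reach every object of $MF_\ZZ(W)$. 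In \cite{CT} and in the present paper the character is fixed to be the standard one $-j \mapsto e^{-2\pi i j/d}$, for which $\tfrac{d}{2}\alpha_{-j}=-j$ does exhaust the residues; you should make this hypothesis explicit rather than leave it implicit. Apart from this, the sign and half-integrality issues you already flag are the only remaining bookkeeping, and your plan for handling them is correct.
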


\begin{prop}[\cite{CHL2}]\label{CHL2theorem}
The above pair $\displaystyle\big(\bigoplus_{p_i \in CF((\LL,b),L)} A_{g_i}[\deg p_i],m_1^{(\LL,b),L}\big)$ is an object of $Tw_\ZZ(R_W\#\ZZ/3)$ where $W=PO(\LL,b)\in R=\Lambda[x,y,z]$, i.e. it gives a graded matrix factorization of $W$ with the usual grading on $R$, i.e. $\deg(x)=\deg(y)=\deg(z)=1$. Furthermore, the collection of maps $\{\CF_*^{\LL,b}\}$ becomes an $\AI$-functor $Fu^\ZZ(T^2) \to Tw_\ZZ(R_W\#\ZZ/3).$
\end{prop}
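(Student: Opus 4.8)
The plan is to deduce Proposition \ref{CHL2theorem} from the ungraded localized mirror functor theorem of \cite{CHL1}, treating it as a \emph{grading-bookkeeping} statement. For every unobstructed $L\in Fu_0(T^2)$ that theorem already supplies the identity $(m_1^{(\LL,b),L})^2 = W\cdot\id$ and \emph{all} of the $\AI$-functor relations for $\{\locmir_*^{(\LL,b)}\}$ with values in the $\ZZ/2$-graded dg category $MF(W)$; none of those algebraic identities refer to a grading. Hence the entire content of the proposition is that, once the components of $\LL$ are labelled by $\ZZ/3$ and the character $-j\mapsto e^{-\pi i\cdot 2j/3}$ is fixed, (i) the pair $\big(\bigoplus_{p_i} A_{g_i}[\deg p_i],\, m_1^{(\LL,b),L}\big)$ is a well-defined object of $Tw_\ZZ(R_W\#\ZZ/3)$ --- i.e. $m_1^{(\LL,b),L}$ is $\ZZ/3$-homogeneous and of degree $1$ for the $\widetilde{\deg}$-grading --- and (ii) each $\CF_k^{\LL,b}(x_1,\dots,x_k)$ is a morphism of the correct $\widetilde{\deg}$ in $Tw_\ZZ(R_W\#\ZZ/3)$. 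Granting (i)--(ii), Theorem \ref{CTtheorem} of \cite{CT} turns the object into a graded matrix factorization of $W$ over $R=\Lambda[x,y,z]$ with $\deg x=\deg y=\deg z=1$, and turns $\{\CF_*^{\LL,b}\}$ into the asserted $\AI$-functor.

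For (i), I would fix $p_i\in L^{g_i}\cap L$, $p_j\in L^{g_j}\cap L$ and a holomorphic polygon $u$ contributing $c_u\,T^{\omega(u)}$ to the $(p_i,p_j)$-entry of $m_1^{(\LL,b),L}$, where $c_u$ is the degree-$\ell$ monomial in $x,y,z$ recording the $\ell$ immersed corners of $u$. Because each immersed generator has $\frac13$-degree $1/3$ and we have normalized $\deg^{1/3}(x)=\deg^{1/3}(y)=\deg^{1/3}(z)=2/3$ (so $\deg^{1/3}b=1$), the degree property of $\frac13$-graded $\AI$-operations (the Proposition of \cite{CHL2} recalled above), applied to the term $m_{1+\ell}(b,\dots,b,p_i)$ producing $u$, together with the $\frac1d$-graded degree formula \eqref{fracgrading}, forces $m_1^{(\LL,b),L}$ to raise the $\frac13$-grading on $CF((\LL,b),L)$ by exactly $1$. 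Next one reads off the $\ZZ/3$-labels along $\partial u$: the immersed corners of $u$ have total $\ZZ/3$-weight $g_j-g_i$, which is where the $\ZZ/3$-equivariance of the data ($\LL=L\cup\tau L\cup\tau^2L$ is $\tau$-invariant and $b$ is built symmetrically) is used. Feeding in the character corrections $\alpha_{g_i},\alpha_{g_j}$ then shows that $\widetilde{\deg}$ of the $(p_i,p_j)$-entry is the odd integer appropriate to $\delta$-degree $1$ --- consistently, the ungraded theorem tells us $\deg p_i$ and $\deg p_j$ have opposite $\ZZ/2$-parity, so the shift $\deg p_j-\deg p_i$ is odd and $\widetilde{\deg}$ of the entry comes out even, as it must for an honest morphism. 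Combined with $(m_1^{(\LL,b),L})^2=W\cdot\id$ from \cite{CHL1}, this produces a well-defined object; its image under Theorem \ref{CTtheorem}, whose summands are $R\big(-\tfrac32\deg p_i-\tfrac32\alpha_{g_i}\big)$ and $R\big(\tfrac32(1-\deg p_i)-\tfrac32\alpha_{g_i}\big)$, lives over $R$ with $\deg x=\deg y=\deg z=1$ (and there $W=PO(\LL,b)$ has degree $3$), as claimed.

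For (ii) the argument is the same in spirit: $\CF_k^{\LL,b}(x_1,\dots,x_k)=(-1)^* m_{k+1}^{b,0,\dots,0}(\,\cdot\,,x_1,\dots,x_k)$, so applying \eqref{fracgrading} and the degree property of $\frac13$-graded operations to each term $m_{k+1+\ell}(b,\dots,b,\,\cdot\,,x_1,\dots,x_k)$ shows $\CF_k^{\LL,b}$ shifts the $\frac13$-grading by $1-k$, and after the character corrections on source and target one gets $\widetilde{\deg}\,\CF_k^{\LL,b}=0$, which is exactly the degree of a component of an $\AI$-functor into $Tw_\ZZ(R_W\#\ZZ/3)$. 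The $\AI$-functor relations themselves are verbatim those already verified in the $\ZZ/2$-graded setting in \cite{CHL1}, so nothing further is required; the energy--degree inequality and Gromov compactness (as in the ungraded case) guarantee the sums defining $m_1^{(\LL,b),L}$ and $\CF_k^{\LL,b}$ are finite modulo any power of $T$ and converge over $\Lambda$.

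The main obstacle is precisely this bookkeeping: one must simultaneously keep track of the $\ZZ$-grading on $T^2$ by $\Omega$ (which supplies the shifts $[\deg p_i]$), the $\frac13$-grading by $\Omega^{\otimes3}$, and the $\widetilde{\deg}$-grading that fuses the $\frac13$-grading with the character twist, and verify that the character $\gamma:\ZZ/3\to U(1)$ is exactly the one for which the twist cancels the $\ZZ/3$-weight carried by the $b$-insertions, so that the output is honestly $\ZZ$-graded rather than only $\ZZ/2$-graded. Within this, the one step that is genuinely geometric --- rather than pure degree counting --- is the claim that a strip from $L^{g_i}$ to $L^{g_j}$ is forced to have immersed corners of total $\ZZ/3$-weight $g_j-g_i$; this follows from tracking the $\ZZ/3$-labels along the boundary of the polygon, using that the labelling is $\tau$-equivariant.
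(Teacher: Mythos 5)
The paper does not actually prove this proposition: it is imported verbatim from \cite{CHL2} (and the text explicitly remarks that the result is proved there in the more general, noncommutative setting), so there is no in-paper argument to compare yours against. Judged on its own terms, your reconstruction is consistent with how the result is established and with the way the present paper uses it: the $\AI$-functor equations and $(m_1^{(\LL,b),L})^2=W\cdot\id$ are inherited from the ungraded functor of \cite{CHL1}, and the genuinely new content is the grading bookkeeping --- that $m_1^{(\LL,b),L}$ has degree $1$ for the $\frac13$-grading (the paper itself records the inputs you use: $\deg^{1/3}b=1$, hence $W$ of degree $2$, via the degree $2-k$ property of the $\frac13$-graded operations), that the $\ZZ/3$-labels along the boundary of a contributing polygon force the monomial entries to carry weight $g_j-g_i$, and that the chosen character makes $\widetilde{\deg}$ land in $2\ZZ$ so the pair is an honest object of $Tw_\ZZ(R_W\#\ZZ/3)$; Theorem \ref{CTtheorem} then converts this into a graded matrix factorization over $R$ with $\deg x=\deg y=\deg z=1$, exactly as instantiated for $M_0$ and $M_1$ in Section \ref{sec:mainthm}. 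Your identification of the one genuinely geometric step (the $\tau$-equivariant label-tracking along $\partial u$) and of the character-cancellation mechanism is the right emphasis. The only caveat is that, as a sketch, it leaves the sign/parity verifications and the claim that the higher components $\CF_k^{\LL,b}$ have $\widetilde{\deg}=0$ at the level of ``the same computation applies''; these are exactly the details deferred to \cite{CHL2}, so nothing in your outline contradicts the source, but a self-contained proof would have to carry them out.
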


The above $\AI$-functor in the proposition is called the {\em graded localized mirror functor} and denoted by $\locmir_{gr}^{\LL,b}$. We often omit $b$ from the notation. We remark that we followed the convention of \cite{CHL2} which is different from that of \cite{CT}. Also we mention that in \cite{CHL2} Proposition \ref{CHL2theorem} is proved in much more general setting, as {\em noncommutative} matrix factorizations. 

\section{Main theorem}\label{sec:mainthm}
Now we are ready to state and prove our main theorem. 
Recall the notation $\CG_i=q_i \circ \RR\omega_i: D^b(qgr-A) \to D^{gr}_{sg}(A)$ from Section 4.

\begin{theorem}
Suppose that $S_0=t_{(0,1/2)}\circ \left(\begin{array}{cc}1 & 0 \\2 & 1\end{array}\right)$ be a symplectomorphism of $E$, where $t_{(0,1/2)}$ is the translation by $(0,1/2)$. Let $\CS_0: Fu(T^2) \to Fu(T^2)$ be an autoequivalence induced by $S_0$. Then we can construct an equivalence $\widetilde{\CF}:D^\pi Fu(T^2) \to D^b Coh(X)$ which equals to Polishchuk-Zaslow's functor on a split-generating subcategory $\calA \subset D^\pi Fu(T^2)$ such that $\CG_0 \circ \widetilde{\CF}$ can be identified by $\locmir^\LL_{gr} \circ \CS_0$.
\end{theorem}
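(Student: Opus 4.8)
The plan is to prove the identification object-by-object and morphism-by-morphism on the split-generating subcategory $\calA$, exploiting the fact that both functors are exact and $\calA$ split-generates, so an equality of exact functors on $\calA$ (together with compatibility with the $\AI$-structure) propagates to the whole category. Concretely, I would take $\calA$ to be the subcategory of $D^\pi Fu(T^2)$ generated by the Lagrangians $L_{(1,0)}$ and $L_{(1,-3)}$ (equivalently, after applying $\CS_0$, their images), since by the Abouzaid--Smith argument recalled in Section \ref{sec:CYCY} these split-generate and the $\AI$-structure on the corresponding subcategory is rigidly determined by its cohomology plus nonformality. So it suffices to (i) define $\widetilde{\CF}$ on $\calA$ to agree with Polishchuk--Zaslow --- i.e. $\widetilde{\CF}(L_{(1,0)})=\CO_X$, $\widetilde{\CF}(L_{(1,-3)})=\CO_X(1)$, with the three intersection points going to $x,y,z$ --- and (ii) check that $\CG_0\circ\widetilde{\CF}$ and $\locmir^\LL_{gr}\circ\CS_0$ send the generators of $\calA$ to the \emph{same} graded matrix factorizations of $W$ and the same morphisms between them.

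The heart of the argument is the computation of $\CG_0=q_0\circ\RR\omega_0$ applied to $\CO_X$ and $\CO_X(1)$. Using the explicit formula $\RR\omega_0(\CE)=\bigoplus_{k\geq 0}\RR\Hom_{D^bCoh(X)}(\CO_X,\CE(k))$ and the Gorenstein ($a=0$) property of $A=R/W$, I would show that $\RR\omega_0(\CO_X)$ and $\RR\omega_0(\CO_X(1))$, as genuine complexes of graded $A$-modules, have minimal free resolutions over $R=\Lambda[x,y,z]$ whose asymptotic $2$-periodic tails are precisely the rank-$3$ matrix factorizations of $W=\phi(x^3+y^3+z^3)+\psi xyz$ given by the Jacobian-type factorizations (the ``$(\text{linear}, \text{quadratic})$'' factorizations built from the partials/cofactors of the cubic). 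Here the Gorenstein condition is used exactly as indicated in the introduction: it forces the relevant higher $\Ext$/cohomology terms in the long exact sequences to vanish, so that the projection $M''$ onto the component $\mathcal{T}_0=\mathcal{D}_0$ of the semiorthogonal decomposition of $D^b({\rm gr}\text{-}A_{\geq 0})$ lands where expected, and hence $\CG_0(\CO_X)$, $\CG_0(\CO_X(1))$ are literally these matrix factorizations (not just up to some unknown twist). Then I would identify these with the output of $\locmir^\LL_{gr}$ applied to $\CS_0(L_{(1,0)})$ and $\CS_0(L_{(1,-3)})$: the point of inserting $\CS_0$ (the matrix $\left(\begin{smallmatrix}1&0\\2&1\end{smallmatrix}\right)$ composed with the translation $t_{(0,1/2)}$) is that $\CS_0(L_{(1,0)})$ and $\CS_0(L_{(1,-3)})$ are positioned transversally to the Seidel Lagrangian $\LL$ in exactly the way that makes $CF((\LL,b),\cdot)$ a \emph{rank-three} matrix factorization with the right grading shifts dictated by Theorem \ref{CTtheorem}, matching $E_0,E_1$ with the $R(-*)$'s coming from the $\RR\omega_0$ computation.

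For the morphism level, I would use the observation flagged in the introduction: a morphism of (these) matrix factorizations is determined by its constant entries, so to check $\CG_0(\widetilde\CF(y))$, $\CG_0(\widetilde\CF(x))$, $\CG_0(\widetilde\CF(z))$ agree with $\locmir^\LL_{gr}(\CS_0(\text{intersection points}))$ I only need to count a small number of holomorphic triangles with corners on $\LL$ (or $\tau\LL,\tau^2\LL$), the $b$-insertions $X_i,Y_i,Z_i$, and the images under $\CS_0$ of the Lagrangians --- i.e. verify that $m_2^{b,0}(\cdot, \CS_0 p_j)$ reproduces multiplication by $x,y,z$ on the cokernel modules, which is the Polishchuk--Zaslow theta-function dictionary transported through $\CS_0$. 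Once object and morphism identifications on the two generators hold, the uniqueness of the $\AI$-structure (nonformality, Polishchuk's theorem) upgrades this to an identification of $\AI$-functors restricted to $\calA$, and split-generation extends it; the shifts $[-j]$, $\tau^d$ in \eqref{si} are bookkeeping for how the phase/grading conventions of the $\frac13$-grading interact with the $\ZZ$-grading on $R$, fixed once and for all by the chosen character. The main obstacle I anticipate is step (i)--(ii)'s rigidity claim combined with getting the \emph{grading shifts exactly right}: one must track the Maslov/phase data through $\CS_0$, through the $\frac{1}{d}$-grading conventions of \cite{CHL2} versus \cite{CT}, and through Orlov's normalization of $\CG_i$, and show all the integer shifts conspire to cancel --- a place where an off-by-one in any convention would break commutativity of the square. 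A secondary difficulty is making precise ``$\widetilde\CF$ equals Polishchuk--Zaslow on $\calA$'': since the PZ functor is itself only pinned down up to the Abouzaid--Smith rigidity, one has to phrase the comparison at the level of the homotopy class of $\AI$-functors, which I would do by fixing a strict model on the two-object subcategory and quoting that all such models are quasi-isomorphic.
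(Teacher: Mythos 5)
Your proposal follows essentially the same route as the paper: choose the two generators $L_{(1,0)}$, $L_{(1,-3)}$, compute $\CG_0=q_0\circ\RR\omega_0$ on $\CO$ and $\CO(1)$ using the Gorenstein property and the semiorthogonal decomposition to pin down $\RR\omega_0(\CO)$ as a genuine complex, match the resulting quasi-$2$-periodic resolutions and their morphisms (via the constant-entry reduction) against the strip counts for $\locmir^\LL_{gr}\circ\CS_0$, and then extend from the generating subcategory by split-generation and the $\AI$-rigidity/quasi-equivalence argument. The only slips are cosmetic: the matrix factorizations that arise are rank $4$ (of shape $R\oplus R(-1)^3$, built from $x,y,z$ and $w_x,w_y,w_z$), not rank $3$, and $\CS_0$ carries $L_{(1,0)}$ and $L_{(1,-3)}$ onto branches of $\LL$ itself rather than to transverse lines, so the Floer complexes are computed after a small Hamiltonian perturbation.
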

This section is devoted to the proof of this theorem. Pick two Lagrangians $L_0:=L_{(1,0)}$ and $L_1:=L_{(1,-3)}$ in $D^\pi Fu_0(E)$, both equipped with nontrivial spin structures. Let $\calA$ be the $\AI$-subcategory consisting of $L_0$ and $L_1$. Let $\CF$ be the Polishchuk-Zaslow's mirror functor. Then $L_0$ and $L_1$ are mapped to $\CO$ and $\CO(1)$ via $\CF$ respectively. Observe that the Lagrangians which we picked generate $D^\pi Fu_0(E)$ and their images via $\CF$ also generate $D^b Coh(X)$. 

\subsection{Computations via $\CG_0 \circ \CF$}
We need to compute image objects of $\CO$, $\CO(1)$ and morphisms between them via $\CG_0$. 

\[\RR \omega_0(\CO)=\bigoplus_{k=0}^\infty \RR\Hom_{D^b Coh(X)}(\CO,\CO(k))\in D^b({\rm gr-}A_{\geq 0}) \hookrightarrow D^b({\rm gr-}A),\]
\[\RR \omega_0(\CO(1))=\bigoplus_{k=0}^\infty \RR\Hom_{D^b Coh(X)}(\CO,\CO(1)(k))\in D^b({\rm gr-}A_{\geq 0}) \hookrightarrow D^b({\rm gr-}A).\]

We recall a useful lemma of homological algebra. 

\begin{lemma} 
For a chain complex $C^\cdot$, if $H^i(C^\cdot) \cong M$ and $H^j(C^\cdot)=0$ for $j\neq i$, then $C^\cdot$ is quasi-isomorphic to $M[-i]$. 
\end{lemma}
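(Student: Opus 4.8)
The plan is to prove this standard fact using the \emph{canonical} (good) truncation functors on cochain complexes. Write the differentials as $d^n\colon C^n\to C^{n+1}$, so that $H^n(C^\cdot)=\ker d^n/\operatorname{im}d^{n-1}$, and recall that by $M[-i]$ we mean the complex concentrated in degree $i$ with single term $M$ and zero differentials. For each integer $n$ let $\tau_{\le n}C^\cdot$ be the truncation
\[
\tau_{\le n}C^\cdot\colon\quad \cdots\to C^{n-1}\xrightarrow{d^{n-1}}\ker d^{n}\to 0\to\cdots,
\]
with $\ker d^n$ placed in degree $n$, and let $\tau_{\ge n}C^\cdot$ be
\[
\tau_{\ge n}C^\cdot\colon\quad \cdots\to 0\to \operatorname{coker}d^{n-1}\xrightarrow{\bar d^{n}} C^{n+1}\to\cdots,
\]
with $\operatorname{coker}d^{n-1}$ in degree $n$. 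These carry a canonical inclusion $\iota\colon\tau_{\le n}C^\cdot\hookrightarrow C^\cdot$ and a canonical surjection $p\colon C^\cdot\twoheadrightarrow\tau_{\ge n}C^\cdot$.

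The first step is the routine verification, valid in any abelian category, that for every complex $D^\cdot$ and every $n$ the inclusion $\tau_{\le n}D^\cdot\hookrightarrow D^\cdot$ is an isomorphism on $H^m$ for $m\le n$ with $H^m(\tau_{\le n}D^\cdot)=0$ for $m>n$, and dually the surjection $D^\cdot\twoheadrightarrow\tau_{\ge n}D^\cdot$ is an isomorphism on $H^m$ for $m\ge n$ with $H^m(\tau_{\ge n}D^\cdot)=0$ for $m<n$. Under our hypothesis $H^m(C^\cdot)=0$ for $m\ne i$, this shows at once that $\iota\colon\tau_{\le i}C^\cdot\to C^\cdot$ is a quasi-isomorphism: it is an isomorphism on $H^m$ for $m\le i$ by the first property, while for $m>i$ both sides vanish.

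The second step is to compute $\tau_{\ge i}\tau_{\le i}C^\cdot$. Applying $\tau_{\ge i}$ to $\tau_{\le i}C^\cdot=(\cdots\to C^{i-1}\to\ker d^{i}\to 0)$ replaces the degree-$i$ term by the cokernel of $d^{i-1}\colon C^{i-1}\to\ker d^{i}$, namely $\ker d^{i}/\operatorname{im}d^{i-1}=H^i(C^\cdot)\cong M$, and leaves zeros in all other degrees; hence $\tau_{\ge i}\tau_{\le i}C^\cdot= M[-i]$. Since $\tau_{\le i}C^\cdot$ already has cohomology concentrated in degree $i$, the second general property above makes the surjection $p\colon\tau_{\le i}C^\cdot\twoheadrightarrow M[-i]$ a quasi-isomorphism as well. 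Stringing the two maps together gives the roof
\[
C^\cdot\ \xleftarrow{\ \iota\ }\ \tau_{\le i}C^\cdot\ \xrightarrow{\ p\ }\ M[-i]
\]
of quasi-isomorphisms, which is exactly the assertion that $C^\cdot$ and $M[-i]$ are quasi-isomorphic.

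There is no genuine obstacle here; the statement is a formal consequence of the truncation formalism, and the only points requiring care are bookkeeping ones: fixing the grading convention for $M[-i]$, getting the directions of the two structural maps $\iota$ and $p$ right, and reading ``quasi-isomorphic'' in the derived category, so that a roof through the single intermediate object $\tau_{\le i}C^\cdot$ is the correct form of the conclusion. One should not expect a direct chain map $C^\cdot\to M[-i]$ in general, since $M$ is a subquotient rather than a quotient of $C^i$. If one prefers to avoid truncations, an alternative is to split the complex degreewise when the terms $C^n$ are projective, but the truncation argument has the advantage of working in any abelian category and is the one I would use.
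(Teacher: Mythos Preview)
Your proof is correct; it is the standard truncation argument and there is nothing to object to. The paper itself does not supply a proof of this lemma---it is stated as a ``useful lemma of homological algebra'' and used immediately without justification---so there is no comparison to make.
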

A simple computation of Ext groups gives $\RR^0 \omega_0(\CO(1)) \cong A(1)_{\geq 0}$, and $\RR^1 \omega_0(\CO(1)) = 0.$ So by the lemma we have a quasi-isomorphism of complexes
\[\RR \omega_0(\CO(1)) \simeq A(1)_{\geq 0}\] (the right hand side is considered to be a complex concentrated in degree 0). Its minimal $A$-free resolution gives rise to the corresponding matrix factorization via $\CG_0$.

It is also easy to see that $\RR^0 \omega_0(\CO) \cong A$ and $\RR^1 \omega_0(\CO) \cong \Lambda$.
The complex $\CE^\cdot:=\RR \omega_0(\CO)$ itself can be explicitly obtained $\CE^\cdot$ by the following lemma.

\begin{lemma}
The complex $\CE^\cdot$ fits into an exact triangle
\[A \to \CE^\cdot \to \bk[-1] \to A[1]\] in $D^b ({\rm gr-}A)$, where $\bk[-1] \to A[1]$ is a nonzero morphism.
\end{lemma}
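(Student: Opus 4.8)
The plan is to identify the complex $\CE^\cdot = \RR\omega_0(\CO)$ explicitly by computing its cohomology sheaves and then recognizing the unique extension they determine. First I would compute the graded pieces $\RR^j\omega_0(\CO)_k = H^j(X,\CO_X(k))$ for all $k \geq 0$. Since $X$ is an elliptic curve and $\CO_X$ has degree $0$, we have $H^0(X,\CO_X) = \Lambda$, $H^1(X,\CO_X) = \Lambda$ (by Serre duality, since $\CO_X$ is the canonical bundle here), and for $k > 0$, $H^0(X,\CO_X(k))$ has dimension $k$ while $H^1(X,\CO_X(k)) = 0$. Assembling these over all $k \geq 0$ as a graded $A$-module, one sees $\RR^0\omega_0(\CO) \cong A$ (with the multiplication structure coming from the section ring, using that $A = R/W$ is the homogeneous coordinate ring of $X$ under this very embedding) and $\RR^1\omega_0(\CO) \cong \bk = \Lambda$ concentrated in degree $0$, both as claimed in the sentence preceding the lemma.

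Next, since $\CE^\cdot$ has cohomology only in degrees $0$ and $1$, the standard truncation triangle for a two-term complex applies: there is an exact triangle
\[
\tau_{\leq 0}\CE^\cdot \to \CE^\cdot \to \tau_{\geq 1}\CE^\cdot \to (\tau_{\leq 0}\CE^\cdot)[1]
\]
in $D^b({\rm gr-}A)$. By the homological algebra lemma just quoted (a complex with cohomology $M$ in a single degree $i$ is quasi-isomorphic to $M[-i]$), we have $\tau_{\leq 0}\CE^\cdot \simeq \RR^0\omega_0(\CO) \simeq A$ and $\tau_{\geq 1}\CE^\cdot \simeq \RR^1\omega_0(\CO)[-1] \simeq \bk[-1]$. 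Substituting these identifications rewrites the triangle as $A \to \CE^\cdot \to \bk[-1] \to A[1]$, which is the asserted triangle.

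It remains to show the connecting morphism $\bk[-1] \to A[1]$ is nonzero, equivalently that $\CE^\cdot$ is not quasi-isomorphic to $A \oplus \bk[-1]$. This is the crux of the argument. I expect the main obstacle to be establishing non-splitting cleanly, and there are two natural routes. The first is to use the semiorthogonal decomposition $D^b({\rm gr-}A_{\geq 0}) = \langle \mathcal{D}_0, \mathcal{S}_{\geq 0}\rangle$ from the earlier lemma: since $\RR\omega_0$ is fully faithful with image $\mathcal{D}_0$, and $\bk[-1]$ (a shift of $\bk = \bk(0) \in \mathcal{S}_{\geq 0}$) lies in $\mathcal{S}_{\geq 0}$, a splitting $\CE^\cdot \simeq A \oplus \bk[-1]$ would force the $\mathcal{S}_{\geq 0}$-component $\bk[-1]$ to vanish after applying $\pi_0$, yet $\pi_0(\CE^\cdot) = \CO \neq 0$ and $\pi_0(A) = \CO$ already accounts for it — so the extra summand $\bk[-1]$ would have to be zero in $D^b({\rm gr-}A)$, a contradiction. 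The second route is a direct computation: $\Hom_{D^b({\rm gr-}A)}(\bk[-1], A[1]) = \Ext^2_A(\bk, A)$, and one checks this group is nonzero in the relevant internal degree using the Gorenstein property ($\RR\Hom_A(\bk,A) \simeq \bk(a)[-n]$ with $a = 0$, $n = 1$ for our CY elliptic curve), which pins down $\Ext^\bullet_A(\bk,A)$; then one argues the specific extension class realized by $\CE^\cdot$ is the nonzero one, for instance by comparing with the known minimal free resolution of $\bk$ over $A$ whose tail is the Koszul-type matrix factorization of $W$. I would present the first route as the main proof since it is shortest, and remark that it is exactly the non-vanishing of this extension class that makes $\CE^\cdot$ a genuine $\RR\Hom$-complex rather than a formal direct sum, which is what the subsequent matrix factorization computation requires.
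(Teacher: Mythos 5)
Your reduction of the lemma to the non-vanishing of the connecting morphism is sound in outline: the truncation triangle $\tau_{\leq 0}\CE^\cdot \to \CE^\cdot \to \tau_{\geq 1}\CE^\cdot$ together with $\RR^0\omega_0(\CO)\cong A$ and $\RR^1\omega_0(\CO)\cong \bk$ does produce a triangle $A \to \CE^\cdot \to \bk[-1] \to A[1]$, and this is a legitimate alternative to the paper's route. The paper argues in the opposite direction: it picks an arbitrary nonzero $f\in\Hom(\bk[-1],A[1])=\Ext^2_A(\bk,A)\cong\Lambda$, forms the cocone $C$ of $f$, shows by long exact sequences that $\Hom(C[i],A(r))=0$ for all $i$ and all $r\leq 0$ so that $C\in\mathcal{T}_0=\mathcal{D}_0$, and then identifies $C$ with $\RR\omega_0(\CO)$ by applying $\pi$; since the relevant Hom space is one-dimensional, this settles the whole lemma at once and avoids having to decide which extension class the truncation triangle realizes.

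The genuine gap is in your non-splitting argument, which is the crux. Your first route is a non sequitur: from $\pi_0(\bk[-1])=0$ you conclude that ``the extra summand $\bk[-1]$ would have to be zero in $D^b({\rm gr\text{-}}A)$,'' but $\pi_0$ annihilates \emph{every} object of $\mathcal{S}_{\geq 0}$ (complexes of torsion modules), so $\pi_0(A\oplus\bk[-1])=\CO=\pi_0(\CE^\cdot)$ holds with no contradiction, and nothing follows about $\bk[-1]$ itself. The orthogonality that actually does the work is against $\mathcal{P}_{\geq 0}$, not $\mathcal{S}_{\geq 0}$: since $\CE^\cdot\in\mathcal{D}_0=\mathcal{T}_0={}^{\perp}\mathcal{P}_{\geq 0}$ and a left orthogonal is closed under direct summands, a splitting $\CE^\cdot\simeq A\oplus\bk[-1]$ would force $\Hom(\bk[-1],A[1])=0$, contradicting $\Ext^2_A(\bk,A)\cong\Lambda$ from the Gorenstein property --- essentially the paper's computation run backwards. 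Your second route names the right group but never shows that the class realized by $\CE^\cdot$ is nonzero, and it misstates the Gorenstein data: for $A=\Lambda[x,y,z]/W$ one has $\RR\Hom_A(\bk,A)\cong\bk[-2]$, i.e.\ $n=2$, not $n=1$; with $n=1$ the group $\Ext^2_A(\bk,A)$ you invoke would vanish. (A further slip: $h^0(X,\CO_X(k))=3k$ for $k>0$, not $k$, since $\CO_X(1)$ has degree $3$; this does not affect the conclusion $\RR^0\omega_0(\CO)\cong A$.)
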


\begin{proof}[Proof of the lemma.]
We follow the method of \cite{Asp}. Recall that $A$ is a Gorenstein algebra satisfying $\RR \Hom_A(\bk,A)=\bk[-2].$ 
Hence, \[\Ext^2_A(\bk,A) \cong \Lambda,{\rm \;or\;\;} \Hom_{D^b({\rm gr-}A)}(\bk,A[2])=\Hom_{D^b({\rm gr-}A)}(\bk[-1],A[1]) \cong \Lambda\] and $\Ext^i_A(\bk,A)=0$ if $i\neq 2$.

Pick any nonzero morphism $f:\bk[-1] \to A[1]$ and let $C$ be its cocone, i.e. 
\begin{equation}\label{extri}
\xymatrix{A \ar[r] & C \ar[r] & \bk[-1]\ar[r]^f & A[1]}
\end{equation}
 is an exact triangle.
 
Then applying $\Hom(\cdot,A(r))$ for $r \leq 0$ in $D^b({\rm gr-}A)$ gives a long exact sequence
\[\xymatrixrowsep{0.2pc}\xymatrix{\cdots \ar[r]& \Hom(\bk[-1],A(r)) \ar[r]& \Hom(C,A(r)) \ar[r]& \Hom(A,A(r)) \\ 
\ar[r] & \Hom(\bk[-2],A(r)) \ar[r] & \Hom(C[-1],A(r)) \ar[r]& \Hom(A[-1],A(r)) \ar[r]& \cdots.}\]

Let $r=0$. Then a part of the above sequence is given by
\[\xymatrix{0 \ar[r] & \Hom(C,A) \ar[r] & \Lambda \ar[r]^{f[-1]^*} & \Lambda \ar[r] & \Hom(C[-1],A)\ar[r] & \Hom(A[-1],A)=0.}\]
Since $f$ is nonzero, $f[-1]^*$ is an injective linear map from $\Lambda$ to itself. So it is also surjective, and $\Hom(C,A)=\Hom(C[-1],A)=0$.
If $i\neq 0,-1$, then the exact sequence
\[\xymatrix{0=\Hom(\bk[i-1],A) \ar[r] & \Hom(C[i],A) \ar[r] & \Hom(A[i],A)=0}\] gives $\Hom(C[i],A)=0.$ Hence $\Hom(C[i],A)=0$ for all $i \in \ZZ.$
Now let $r<0.$ Then $\Hom(\bk[i],A(r))=0$ for all $i \in \ZZ$ 
by Gorenstein condition. 
Clearly $\Hom(A[i],A(r))=0$ for any $i \in \ZZ$ and $r<0$. Consequently $\Hom(C[i],A(r))=0$ for any $i\in \ZZ$, $r \leq 0.$ By the semiorthogonal decomposition $D^b({\rm gr-}A_{\geq 0})=\langle \mathcal{P}_{\geq 0},\mathcal{T}_0 \rangle,$ since $\Hom(C,P)=0$ for all $P \in \mathcal{P}_{\geq 0}$, $C$ is in $\mathcal{T}_0.$ 

Finally, via $\pi: D^b({\rm gr-}A) \to D^b({\rm qgr-}A)$, by (\ref{extri}), $\pi C$ is equivalent to $\pi A$ which corresponds to $\CO \in D^b Coh(X)$. Since $\RR \omega_0:D^b Coh(X) \to \mathcal{D}_0(=\mathcal{T}_0)$ is an equivalence, $C$ is isomorphic to $\CE^\cdot=\RR \omega_0(\CO).$
\end{proof}

By the lemma, $\CE^\cdot$ and the mapping cone of $g: \bk[-2] \to A$ are quasi-isomorphic as chain complexes. Since $\bk\cong A/(x,y,z)$ and $(x,y,z)$ is a regular sequence of $R=\Lambda[x,y,z]$, we follow the algorithm in \cite{Dyc} to take the free resolution of $\bk$. It is given by a double complex
\[\xymatrixrowsep{4pc}\xymatrixcolsep{4pc}
\xymatrix{
  \vdots \ar[d] & \vdots\ar[d]& \vdots\ar[d]& & \\
 A(-8)^3 \ar[r] \ar[d] & A(-7)^3\ar[r]\ar[d] & A(-6)\ar[d] & & & \\
 A(-6)\ar[r]  & A(-5)^3 \ar[r] \ar[d]_{\left(\begin{smallmatrix}w_x &w_y & w_z\end{smallmatrix}\right)} & A(-4)^3 \ar[r] \ar[d]_{\tiny\left(\begin{smallmatrix}0 & \frac{w_z}{\alpha} & -\frac{w_y}{\alpha} \\-\frac{w_z}{\alpha} & 0 & \frac{w_x}{\alpha} \\ \frac{w_y}{\alpha} & -\frac{w_x}{\alpha} & 0\end{smallmatrix}\right)}
& A(-3) \ar[d]_{\left(\begin{smallmatrix} w_x \\ w_y \\ w_z\end{smallmatrix}\right)} 
& & \\
  & A(-3) \ar[r]_{\left(\begin{smallmatrix}x \\y \\z\end{smallmatrix}\right)}& A(-2)^3 \ar[r]_{\tiny\left(\begin{smallmatrix}0 & -\alpha z & \alpha y \\\alpha z & 0 & -\alpha x \\-\alpha y & \alpha x & 0\end{smallmatrix}\right)} & A(-1)^3 \ar[r]_-{\left(\begin{smallmatrix}x & y & z\end{smallmatrix}\right)} & A\ar[r] & 0}\]
where $\displaystyle\alpha=\sum_{k=0}^\infty \big((-1)^k T^{et(1+6k)}+(-1)^{k+1} T^{et(5+6k)}\big)$, $W=xw_x+yw_y+zw_z$ with
\[w_x=x^2\sum_{k=0}^\infty (-1)^{k+1}(2k+1)q_\alpha^{(6k+3)^2}+yz\sum_{k=1}^\infty (-1)^{k+1}(2kq_\alpha^{(6k+1)^2}-2kq_\alpha^{(6k-1)^2}),\]
\[w_y=y^2\sum_{k=0}^\infty (-1)^{k}(2k+1)q_\alpha^{(6k+3)^2}+zx\sum_{k=1}^\infty (-1)^{k}(2kq_\alpha^{(6k+1)^2}-2kq_\alpha^{(6k-1)^2}),\]
\[w_z=z^2\sum_{k=0}^\infty (-1)^{k+1}(2k+1)q_\alpha^{(6k+3)^2}+xy\sum_{k=1}^\infty (-1)^{k+1} ((2k+1)q_\alpha^{(6k+1)^2}-(2k-1)q_\alpha^{(6k-1)^2})-xyq_\alpha.\] Here $et(a)$ means the area of an equilateral triangle of face length $a$(we let the length of the minimal triangle be 1). $\alpha$ equals to $\gamma$ which is in Definition 7.8 of \cite{CHL1}. We choose this specific free resolution to make the comparison of objects more easily.

The maps which are not specified are just copies of written ones. Observe that each row is given by the usual Koszul complex of $(x,y,z)$, and the vertical maps are needed to capture relations caused by $W$. It is clear that the complex is eventually quasi-2-periodic and gives a matrix factorization of $W$. Now the free resolution of $\bk[-2]$ is written as
\[\xymatrixrowsep{0.2pc}\xymatrixcolsep{3pc}\xymatrix{
 \cdots \ar[r] & A(-6)\oplus A(-5)^3\ar[rr]^{\tiny\left(\begin{smallmatrix} 0 & w_x & w_y & w_z \\ w_x &  0 & -\alpha z & \alpha y \\ w_y & \alpha z & 0 & -\alpha x \\ w_z & -\alpha y & \alpha x & 0\end{smallmatrix}\right)} 
 & &A(-3) \oplus A(-4)^3 \ar[rr]^-{\tiny\left(\begin{smallmatrix}0 & x & y & z \\x & 0 & \frac{w_z}{\alpha} & -\frac{w_y}{\alpha} 
 \\y & -\frac{w_z}{\alpha} & 0 & \frac{w_x}{\alpha} \\z & \frac{w_y}{\alpha} & -\frac{w_x}{\alpha} & 0\end{smallmatrix}\right)} & & A(-3)\oplus A(-2)^3
 \\ & & & & & \fbox{{\rm 0th}} } 
\]
\[\xymatrixrowsep{0.2pc}\xymatrixcolsep{3pc}\xymatrix{
 \ar[rr]^-{\tiny\left(\begin{smallmatrix} w_x &  0 & -\alpha z & \alpha y \\ w_y & \alpha z & 0 & -\alpha x \\ w_z & -\alpha y & \alpha x & 0\end{smallmatrix}\right)} & & A(-1)^3 \ar[r]^-{\left(\begin{smallmatrix} x&y&z\end{smallmatrix}\right)} & A \ar[r] & 0. & & & & \\ & & \fbox{{\rm 1st}}&\fbox{{\rm 2nd}} & &}
\]
Then any map $\bk[-2] \to A$ is given by $\phi: A(-3)\oplus A(-2)^3 \to A$ such that 
\[\phi\circ \left(\begin{matrix}0 & x & y & z \\x & 0 & \frac{w_z}{\alpha} & -\frac{w_y}{\alpha} 
 \\y & -\frac{w_z}{\alpha} & 0 & \frac{w_x}{\alpha} \\z & \frac{w_y}{\alpha} & -\frac{w_x}{\alpha} & 0\end{matrix}\right) =0\]
and it is easy to see that $\phi=\left(\begin{matrix}0 & w_x & w_y & w_z\end{matrix}\right)$ gives a nontrivial morphism. It is nothing but the first row of the consecutive differential map of the resolution. Any other row also defines a chain map, but then it becomes homotopically trivial.

So the mapping cone $C(\phi)$, which is isomorphic to $\RR \omega_0(\CO)$, is given by 
\[\xymatrixrowsep{0.2pc}\xymatrixcolsep{2.6pc}\xymatrix{
\cdots \ar[rr]^-{\tiny\left(\begin{smallmatrix}0 & x & y & z \\x & 0 & \frac{w_z}{\alpha} & -\frac{w_y}{\alpha} 
 \\y & -\frac{w_z}{\alpha} & 0 & \frac{w_x}{\alpha} \\z & \frac{w_y}{\alpha} & -\frac{w_x}{\alpha} & 0\end{smallmatrix}\right)} & & A(-3)\oplus A(-2)^3 \ar[rr]^{\tiny\left(\begin{smallmatrix} 0 & w_x & w_y & w_z \\ w_x &  0 & -\alpha z & \alpha y \\ w_y & \alpha z & 0 & -\alpha x \\ w_z & -\alpha y & \alpha x & 0\end{smallmatrix}\right)} 
 & &
 A \oplus A(-1)^3 \ar[r]^-{\tiny\left(\begin{smallmatrix} 0&x&y&z\end{smallmatrix}\right)}
 &A \ar[r] & 0. \\
 & & & & & \fbox{{\rm 0th}} & \fbox{{\rm 1st}} &
 }\]

Now we compute the free resolution of $\RR \omega_0(\CO(1)) \simeq A(1)_{\geq 0}$. Since $A(1)_{\geq 0}$ is generated by $x,y$ and $z$, the resolution starts from
\[\xymatrixcolsep{2.6pc}\xymatrix{\cdots \ar[r] & F^{-1} \ar[r] & A^3 \ar[r]^-{(x\;\;y\;\;z)} & A(1)_{\geq 0} \ar[r] & 0}\]
and the same argument as above gives the following free resolution of $A(1)_{\geq 0}$:
\[\xymatrixcolsep{2.6pc}\xymatrix{
\cdots \ar[r] & A(-2) \oplus A(-3)^3 \ar[rr]^-{\tiny\left(\begin{smallmatrix}0 & x & y & z \\x & 0 & \frac{w_z}{\alpha} & -\frac{w_y}{\alpha} 
 \\y & -\frac{w_z}{\alpha} & 0 & \frac{w_x}{\alpha} \\z & \frac{w_y}{\alpha} & -\frac{w_x}{\alpha} & 0\end{smallmatrix}\right)} & & A(-2)\oplus A(-1)^3 \ar[rr]^-{\tiny\left(\begin{smallmatrix} w_x &  0 & -\alpha z & \alpha y \\ w_y & \alpha z & 0 & -\alpha x \\ w_z & -\alpha y & \alpha x & 0\end{smallmatrix}\right)} & & A^3 \ar[r]
 & 0.}\]

Now we are ready to compare morphisms, $HF^k(L_i,L_j)$ and $\Hom_{HMF_\ZZ(W)}(M_i,M_j[k]),$ $i,j=$ 0 or 1 and $k=$ 0 or 1. Here $M_i$ and $M_j$ are matrix factorizations corresponding to $L_i$ and $L_j$ via above correspondence.
$HF^0(L_0,L_0)$ and $HF^0(L_1,L_1)$ are generated by identity morphisms, and any functor preserves identities, so we do not need any computation for degree 0 endomorphisms.

Recall that three intersections of $L_{(1,0)}$ and $L_{(1,-3)}$, which are basis of $HF^0(L_0,L_1)$, correspond to $\CO \stackrel{x}{\longrightarrow} \CO(1)$, $\CO \stackrel{y}{\longrightarrow} \CO(1)$ and $\CO \stackrel{z}{\longrightarrow} \CO(1)$ via $\CF$. We need to know how they correspond to morphisms between matrix factorizations via $\CG_0$. We compute the example $\CO \stackrel{x}{\longrightarrow} \CO(1)$. $f:=\RR \omega_0(x): \RR \omega_0(\CO) \to \RR \omega_0(\CO(1))$ is a map 

\[\xymatrixrowsep{4pc}\xymatrixcolsep{2.6pc}\xymatrix{
\cdots\ar[rr]^-{\tiny\left(\begin{smallmatrix}0 & x & y & z \\x & 0 & \frac{w_z}{\alpha} & -\frac{w_y}{\alpha} 
 \\y & -\frac{w_z}{\alpha} & 0 & \frac{w_x}{\alpha} \\z & \frac{w_y}{\alpha} & -\frac{w_x}{\alpha} & 0\end{smallmatrix}\right)} & & A(-3)\oplus A(-2)^3 \ar[d]^{f^{-1}}\ar[rr]^-{\tiny\left(\begin{smallmatrix} 0 & w_x & w_y & w_z \\ w_x &  0 & -\alpha z & \alpha y \\ w_y & \alpha z & 0 & -\alpha x \\ w_z & -\alpha y & \alpha x & 0\end{smallmatrix}\right)} 
 & &
 A \oplus A(-1)^3 \ar[d]^{f^0}\ar[r]^-{\tiny\left(\begin{smallmatrix} 0&x&y&z\end{smallmatrix}\right)}
 &A \ar[d]\ar[r] & 0 \\
 \cdots \ar[rr]^-{\tiny\left(\begin{smallmatrix}0 & x & y & z \\x & 0 & \frac{w_z}{\alpha} & -\frac{w_y}{\alpha} 
 \\y & -\frac{w_z}{\alpha} & 0 & \frac{w_x}{\alpha} \\z & \frac{w_y}{\alpha} & -\frac{w_x}{\alpha} & 0\end{smallmatrix}\right)} & & A(-2)\oplus A(-1)^3 \ar[rr]^-{\tiny\left(\begin{smallmatrix} w_x &  0 & -\alpha z & \alpha y \\ w_y & \alpha z & 0 & -\alpha x \\ w_z & -\alpha y & \alpha x & 0\end{smallmatrix}\right)} & & A^3 \ar[r] & 0 \ar[r] & 0} 
 \]
which induces $x: A \to A(1)_{\geq 0}$, namely the map $x$ between $\RR^0\omega_0(\CO)(\cong A)$ and $\RR^0\omega_0(\CO(1))(\cong A(1)_{\geq 0})$. It turns out that maps between the 0th cohomologies completely determine the maps between genuine complexes, because
\begin{eqnarray*} 
\dim_\Lambda\Hom_{D^b({\rm gr-}A)}(\RR\omega_0(\CO),\RR\omega_0(\CO(1)))&=&\dim_\Lambda\Hom_{D^bCoh(X)}(\CO,\CO(1)) \\
&=&3 \\
&=&\dim_\Lambda\Hom_{{\rm gr-}A}(A,A(1)_{\geq 0})
\end{eqnarray*}
(the first identity comes from the fact that $\RR\omega_0$ is fully faithful). Therefore, instead of trying to compute the morphism $f$ completely, we just try to describe data of $f$ which are sufficient to determine it.

From $\CO \stackrel{x}{\longrightarrow} \CO(1)$, the induced morphism of complexes is determined by $H^0(f)$, and it induces $A \stackrel{x}{\longrightarrow} A(1)_{\geq 0}$ if \[f^0=\left(\begin{array}{cccc}1 & * & * & * \\ 0 & * & * & * \\ 0 & * & * & *\end{array}\right).\] Similarly, if we start from $\CO \stackrel{y}{\longrightarrow} \CO(1)$ or $\CO \stackrel{z}{\longrightarrow} \CO(1)$, then the induced maps are \[g^0= \left(\begin{array}{cccc}0 & * & * & * \\ 1 & * & * & * \\ 0 & * & * & *\end{array}\right) {\rm \;\;or\;\;} h^0=\left(\begin{array}{cccc}0 & * & * & * \\ 0 & * & * & * \\ 1 & * & * & *\end{array}\right)\] respectively. 

We also need to examine the correspondence of higher degree morphisms, i.e. we compare $HF^1(L_i,L_j)\cong HF^0(L_i,L_j[1])$ and $\Hom_{HMF_\ZZ(W)}(M_i,M_j[1]).$ The Serre duality for $X$ gives $\Ext^1(\CO(1),\CO) \cong \Ext^0(\CO,\CO(1))^*$ and $\Ext^1(\CO,\CO(1)) \cong \Ext^0(\CO(1),\CO)^*=0.$ We describe the morphism $\CO(1) \stackrel{x^*}{\longrightarrow} \CO[1]$ as a morphism between $\RR \omega_0(\CO(1))$ and $\RR \omega_0(\CO[1]),$ where $x^*\in \Ext^1(\CO(1),\CO)$ is the dual of $\CO \stackrel{x}{\longrightarrow} \CO(1)$. $g:=\RR\omega_0(x^*): \RR\omega_0(\CO(1)) \to \RR\omega_0(\CO[1])$ is a map

\begin{gather}
\xymatrixrowsep{3pc}\xymatrixcolsep{2.6pc}\xymatrix{
\cdots \ar[r] & A(-2) \oplus A(-3)^3 \ar[d]^{f'^{-2}}\ar[rr]^-{\tiny\left(\begin{smallmatrix}0 & x & y & z \\x & 0 & \frac{w_z}{\alpha} & -\frac{w_y}{\alpha} 
 \\y & -\frac{w_z}{\alpha} & 0 & \frac{w_x}{\alpha} \\z & \frac{w_y}{\alpha} & -\frac{w_x}{\alpha} & 0\end{smallmatrix}\right)} & & A(-2)\oplus A(-1)^3 \ar[d]^{f'^{-1}}\ar[rr]^-{\tiny\left(\begin{smallmatrix} w_x &  0 & -\alpha z & \alpha y \\ w_y & \alpha z & 0 & -\alpha x \\ w_z & -\alpha y & \alpha x & 0\end{smallmatrix}\right)} & & A^3 \ar[d]^{f'^0} \ar[r] & 0 \\
\cdots\ar[r] & A(-3)\oplus A(-2)^3 \ar[rr]^-{\tiny\left(\begin{smallmatrix} 0 & w_x & w_y & w_z \\ w_x &  0 & -\alpha z & \alpha y \\ w_y & \alpha z & 0 & -\alpha x \\ w_z & -\alpha y & \alpha x & 0\end{smallmatrix}\right)} 
 & &
 A \oplus A(-1)^3 \ar[rr]^-{\left(\begin{smallmatrix} 0&x&y&z\end{smallmatrix}\right)}
 & & A \ar[r] & 0.}
 \label{deg1mfmorph}
 \end{gather}
It is also determined by the map of the 0th cohomologies by dimension arguments as above. Recall that $\RR^0\omega_0(\CO(1)) \cong A(1)_{\geq 0}$ and it is generated by $x$, $y$ and $z$ which are identified as morphisms from $\CO$ to $\CO(1)$. On the other hand, $\RR^0\omega_0(\CO[1]) \cong A/(x,y,z) \cong \Lambda$, and $\Hom_{{\rm gr-}A}(A(1)_{\geq 0},A/(x,y,z))$ has basis $\{\phi_x,\phi_y,\phi_z\}$ where $\phi(x)$ is defined by \[\phi_x(x)=1, \phi_x(y)=\phi_x(z)=0,\] $\phi_y$ and $\phi_z$ are defined similarly. Hence $x^*$ corresponds to $\phi_x$, which is induced by $g^0=(1 \;\; 0\;\; 0):A^3 \to A$. Similarly $y^*$ corresponds to $(0\;\;1\;\;0):A^3 \to A$, and $z^*$ to $(0\;\;0\;\;1):A^3 \to A.$ As before, they completely determine $g^{-1}$, $g^{-2}$,$\cdots$, so give rise to a morphism of matrix factorizations. Finally, under the duality $\Ext^0(\CO,\CO) \cong \Ext^1(\CO,\CO)^*$ and $\Ext^0(\CO(1),\CO(1)) \cong \Ext^1(\CO(1),\CO(1))^*$, 
\[x^* \circ x = y^* \circ y= z^* \circ z = (\id_\CO)^* \in \Ext^1(\CO,\CO),\] 
\[x \circ x^*=y\circ y^*= z\circ z^* = (\id_{\CO(1)})^* \in \Ext^1(\CO(1),\CO(1))\]
and they are mirrors of the basis of $HF^1(L_0,L_0)$ and $HF^1(L_1,L_1)$ respectively. There are corresponding morphisms of matrix factorizations given by compositions of maps computed above. Finally, replacing all free $A$-modules by $R$-modules, i.e. considering above chain complexes over $A$ as matrix factorizations of $W$, and extending it (quasi)2-periodically, we get objects and morphisms in $HMF_\ZZ(W).$

\subsection{Computations via $\locmir_{gr}^\LL \circ \CS_0$} Via $\CS_0$, $L_0$ is mapped to $L$ which is a branch of $\LL$ and $L_1$ is mapped to $\tau(L)$ which is another branch of $\LL.$ Write $L_0':=\CS_0(L_0)=L$ and $L_1':=\CS_0(L_1)=\tau(L).$  Corresponding graded matrix factorizations $M_0=\locmir_{gr}^\LL(L_0')$ and $M_1:=\locmir_{gr}^\LL(L_1')$ are given by counting strips between even and odd intersections from $\LL$ to $L_i'$ for $i=0,1$. As in \cite{CHL1}, to see the picture more intuitively, we take a small Hamiltonian perturbation $\phi^t$ of $L_i'$, construct Floer complexes $(CF(\LL,\phi^t(L_i')),m_1^t)$ and take the limit $t \to 0$ to get the correct strip counting $m_1: CF(\LL,L_i') \to CF(\LL,L_i').$ According to the definition of the graded localized mirror functor, we fix a character $-j \mapsto e^{\pi i \cdot (-\frac{2j}{3})}$, i.e. $\alpha_{-j}=-\frac{2j}{3}$, where the components are labelled by $L=L^0,$ $\tau(L)=L^{-1}$, $\tau^2(L)=L^{-2}.$ For $L_0=\CS_0(L_{(1,0)})$, $\deg(a_0)=0$, $\deg(a_i)=2$, $\deg(b_0)=1=\deg(b_i)$ for $i=1,2,3$. By Theorem \ref{CTtheorem}, the 0th part $M_0^0$ of the corresponding matrix factorization is $R(0) \oplus R(-1)^3$ where $R(0)$ comes from $a_0$ and $R(-1)^3$ from $a_i$ for $i=1,2,3$.

\begin{figure}
\includegraphics[height=3in]{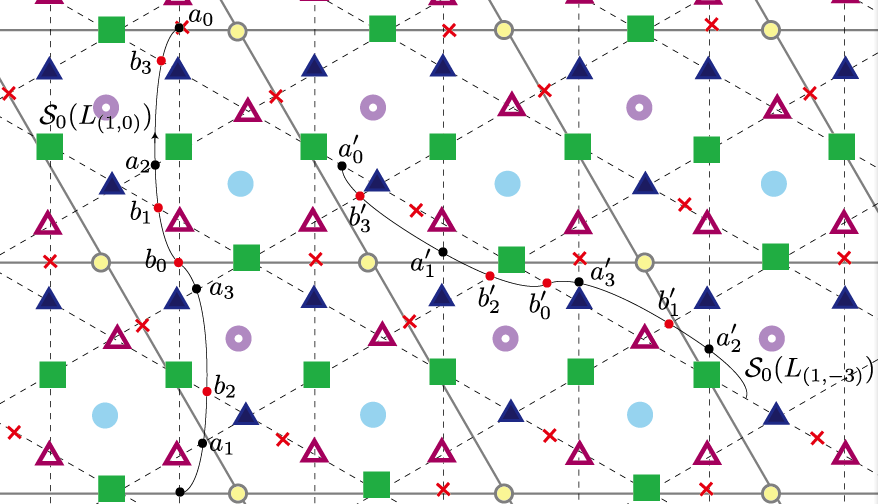}
\caption{(Hamiltonian perturbations of) the images of $L_{(1,0)}$ and $L_{(1,-3)}$ under $\CS_0.$ Black dots are even intersections and red dots are odd intersections.}
\label{4x4mf1}
\end{figure}

Similarly $M^1_0$ which is at the 1st position of $M_0$ is $R \oplus R(1)^3$ and the basis is $\{b_0,b_1,b_2,b_3\}.$ It is also straightforward to see that $M_1^0$ and $M_1^1$ are $R(1) \oplus R^3$ and $R(1) \oplus R(2)^3$ respectively, and their basis are similarly given by $\{a_0',a_1',a_2',a_3'\}$ and $\{b_0',b_1',b_2',b_3'\}$. Under these basis choices, by computations given in Chapter 7 of \cite{CHL1} $M_0$ is as follows:

\[\xymatrixrowsep{4pc}\xymatrixcolsep{2.6pc}\xymatrix{
\cdots\ar[r] &
R(-3) \oplus R(-4)^3  \ar[rr]^-{\tiny\left(\begin{smallmatrix}0 & x & y & z \\x & 0 & \frac{w_z}{\alpha} & -\frac{w_y}{\alpha} 
 \\y & -\frac{w_z}{\alpha} & 0 & \frac{w_x}{\alpha} \\z & \frac{w_y}{\alpha} & -\frac{w_x}{\alpha} & 0\end{smallmatrix}\right)} & & R(-3)\oplus R(-2)^3 \ar[rr]^-{\tiny\left(\begin{smallmatrix} 0 & w_x & w_y & w_z \\ w_x &  0 & -\alpha z & \alpha y \\ w_y & \alpha z & 0 & -\alpha x \\ w_z & -\alpha y & \alpha x & 0\end{smallmatrix}\right)} 
 & &
 R \oplus R(-1)^3 \ar[r]
 &  \cdots}\]
 and $M_1$ is given by:
 
 \[\xymatrixrowsep{4pc}\xymatrixcolsep{2.6pc}\xymatrix{
\cdots\ar[r] &
R(-2) \oplus R(-3)^3  \ar[rr]^-{\tiny\left(\begin{smallmatrix}0 & x & y & z \\x & 0 & \frac{w_z}{\alpha} & -\frac{w_y}{\alpha} 
 \\y & -\frac{w_z}{\alpha} & 0 & \frac{w_x}{\alpha} \\z & \frac{w_y}{\alpha} & -\frac{w_x}{\alpha} & 0\end{smallmatrix}\right)} & & R(-2)\oplus R(-1)^3 \ar[rr]^-{\tiny\left(\begin{smallmatrix} 0 & w_x & w_y & w_z \\ w_x &  0 & -\alpha z & \alpha y \\ w_y & \alpha z & 0 & -\alpha x \\ w_z & -\alpha y & \alpha x & 0\end{smallmatrix}\right)} 
 & &
 R(1) \oplus R^3 \ar[r]
 &  \cdots.}\]

 So we observe that $\CG_0 \circ \CF$ and $\locmir_{gr}^\LL \circ \CS_0$ are identical on objects $L_0$ and $L_1$.

As noticed above, morphisms between $M_0[i]$ and $M_1[j]$ $(i,j=0,1)$ are determined by constant entry parts, so we do not compute all entries of corresponding morphisms. An intersection $(0,0) \in CF(L_0,L_1)$ is mapped to $(0,1/2)$, which correspond to $y \in CF(\phi^t(L_0'),\phi^t(L_1'))$ as in Figure \ref{morphism}. Similarly, $\CS_0(1/3,0)=x$ and $\CS_0(2/3,0)=z$. 

\begin{figure}
\includegraphics[height=3.5in]{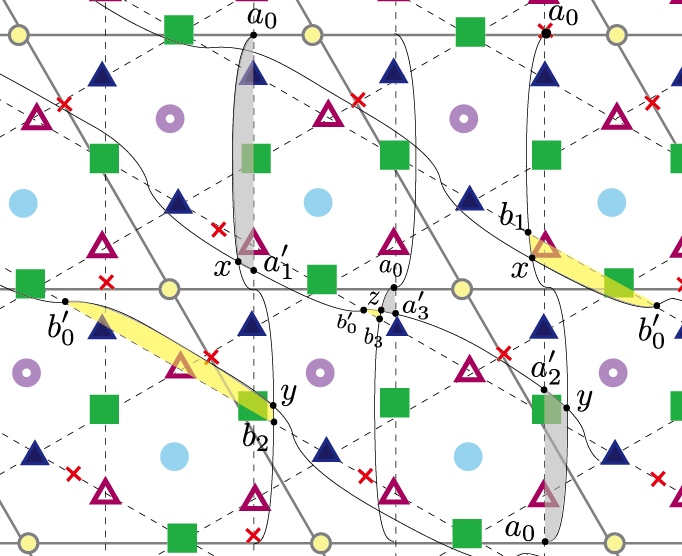}
\caption{$CF(\phi^t(L_0'),\phi^t(L_1'))$ generated by $x$, $y$ and $z$, which map $b_i \mapsto b_0'$(yellow strips) and $a_0 \mapsto a_i' $(gray strips) for $i=1,2,3.$}
\label{morphism}
\end{figure}

 As $t \to 0$, strips in Figure \ref{morphism} collapse to strips of area zero, and they contribute with positive signs by the criterion discussed in section \ref{subsec:fukaya}. It is clear that there are no more strips which contribute to morphisms $b_i \mapsto b_0'$ and $a_0 \mapsto a_i'.$ For example, the morphism of matrix factorizations given by $x$ is given as follows:
 
\[\xymatrixrowsep{4pc}\xymatrixcolsep{2.6pc}\xymatrix{
\cdots\ar[r] &
R(-3) \oplus R(-4)^3 \ar[d]_{p^{-2}}\ar[rr]^-{\tiny\left(\begin{smallmatrix}0 & x & y & z \\x & 0 & \frac{w_z}{\alpha} & -\frac{w_y}{\alpha} 
 \\y & -\frac{w_z}{\alpha} & 0 & \frac{w_x}{\alpha} \\z & \frac{w_y}{\alpha} & -\frac{w_x}{\alpha} & 0\end{smallmatrix}\right)} & & R(-3)\oplus R(-2)^3 \ar[d]_{p^{-1}}\ar[rr]^-{\tiny\left(\begin{smallmatrix} 0 & w_x & w_y & w_z \\ w_x &  0 & -\alpha z & \alpha y \\ w_y & \alpha z & 0 & -\alpha x \\ w_z & -\alpha y & \alpha x & 0\end{smallmatrix}\right)} 
 & &
 R \oplus R(-1)^3 \ar[d]^{p^0}\ar[r]
 &  \cdots \\
\cdots\ar[r] &
R(-2) \oplus R(-3)^3  \ar[rr]^-{\tiny\left(\begin{smallmatrix}0 & x & y & z \\x & 0 & \frac{w_z}{\alpha} & -\frac{w_y}{\alpha} 
 \\y & -\frac{w_z}{\alpha} & 0 & \frac{w_x}{\alpha} \\z & \frac{w_y}{\alpha} & -\frac{w_x}{\alpha} & 0\end{smallmatrix}\right)} & & R(-2)\oplus R(-1)^3 \ar[rr]^-{\tiny\left(\begin{smallmatrix} 0 & w_x & w_y & w_z \\ w_x &  0 & -\alpha z & \alpha y \\ w_y & \alpha z & 0 & -\alpha x \\ w_z & -\alpha y & \alpha x & 0\end{smallmatrix}\right)} 
 & &
 R(1) \oplus R^3 \ar[r]
 &  \cdots}\]
 with $p^{2i}=\left(\begin{array}{cccc} * & * & * & * \\ 1 & * & * & * \\ 0 & * & * & * \\ 0 & * & * & * \end{array}\right)$ and $p^{2i-1}=\left(\begin{array}{cccc} * & -1 & 0 & 0 \\ * & * & * & * \\ * & * & * & * \\ * & * & * & * \end{array}\right).$ 
 
 Similarly, the morphism induced by $y$ is given by 
 \[q^{2i}=\left(\begin{array}{cccc} * & * & * & * \\ 0 & * & * & * \\ 1 & * & * & * \\ 0 & * & * & * \end{array}\right), \; q^{2i-1}=\left(\begin{array}{cccc} * & 0 & -1 & 0 \\ * & * & * & * \\ * & * & * & * \\ * & * & * & * \end{array}\right)\]
 and $z$ gives the morphism
  \[r^{2i}=\left(\begin{array}{cccc} * & * & * & * \\ 0 & * & * & * \\ 0 & * & * & * \\ 1 & * & * & * \end{array}\right), \; r^{2i-1}=\left(\begin{array}{cccc} * & 0 & 0 & -1 \\ * & * & * & * \\ * & * & * & * \\ * & * & * & * \end{array}\right).\]

Therefore, $\locmir_{gr}^\LL\circ \CS_0$ and $\CG_0 \circ \CF$ are identical on morphisms $L_{(1,0)} \to L_{(1,-3)}.$
  
We can also obtain morphisms of graded matrix factorizations $M_1 \to M_0[1]$ from morphisms $L_1' \to L_0'[1]$ in the Fukaya category. As already commented, it suffices to calculate constant entries. They are given by holomorphic strips in Figure \ref{deg1morph}. Again in this case there are no more strips which map $a_0' \mapsto b_i$ and $a_i' \mapsto b_0$. It is also clear that they collapse to area zero and have positive signs.

\begin{figure}
\includegraphics[height=3.5in]{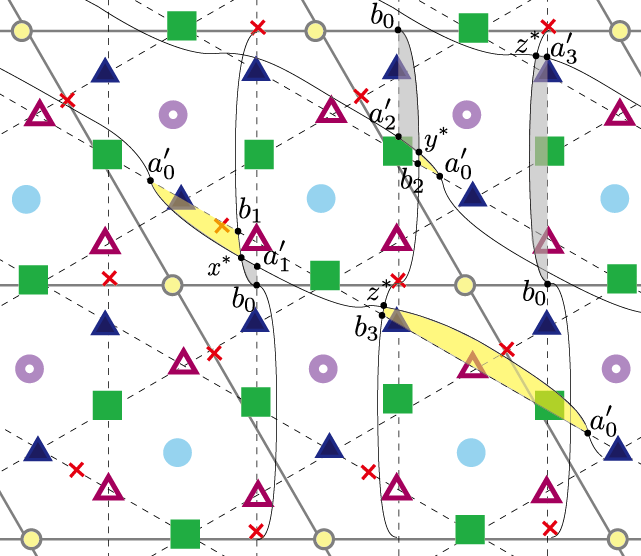}
\caption{$CF(\phi^t(L_1'),\phi^t(L_0'[1]))$ generated by $x^*$, $y^*$ and $z^*$ which are equal to $x$, $y$ and $z$ as intersection points. They map $a_0' \mapsto b_i$(yellow strips), $a_i' \mapsto b_0$(gray strips) for $i=1,2,3$.}
\label{deg1morph}
\end{figure}

Hence, the morphism of graded matrix factorizations induced by $x^*$ is given by

\[\xymatrixrowsep{4pc}\xymatrixcolsep{2.6pc}\xymatrix{
\cdots\ar[r] &
R(-2) \oplus R(-3)^3  \ar[d]_{p'^{-2}}\ar[rr]^-{\tiny\left(\begin{smallmatrix}0 & x & y & z \\x & 0 & \frac{w_z}{\alpha} & -\frac{w_y}{\alpha} \\y & -\frac{w_z}{\alpha} & 0 & \frac{w_x}{\alpha} \\z & \frac{w_y}{\alpha} & -\frac{w_x}{\alpha} & 0\end{smallmatrix}\right)} & & R(-2)\oplus R(-1)^3 \ar[d]_{p'^{-1}}\ar[rr]^-{\tiny\left(\begin{smallmatrix} 0 & w_x & w_y & w_z \\ w_x &  0 & -\alpha z & \alpha y \\ w_y & \alpha z & 0 & -\alpha x \\ w_z & -\alpha y & \alpha x & 0\end{smallmatrix}\right)} 
 & &
 R(1) \oplus R^3 \ar[d]^{p'^0}\ar[r]
 &  \cdots \\
\cdots\ar[r] &
R(-3) \oplus R(-2)^3 \ar[rr]^-{\tiny\left(\begin{smallmatrix} 0 & w_x & w_y & w_z \\ w_x &  0 & -\alpha z & \alpha y \\ w_y & \alpha z & 0 & -\alpha x \\ w_z & -\alpha y & \alpha x & 0\end{smallmatrix}\right)} & & R\oplus R(-1)^3 \ar[rr]^-{\tiny\left(\begin{smallmatrix}0 & x & y & z \\x & 0 & \frac{w_z}{\alpha} & -\frac{w_y}{\alpha} 
 \\y & -\frac{w_z}{\alpha} & 0 & \frac{w_x}{\alpha} \\z & \frac{w_y}{\alpha} & -\frac{w_x}{\alpha} & 0\end{smallmatrix}\right)}
 & &
 R \oplus R(1)^3 \ar[r] &  \cdots}\]
such that $p'^{2i}=\left(\begin{array}{cccc}0 & 1 & 0 & 0 \\1 & * & * & * \\0 & * & * & * \\0 & * & * & *\end{array}\right).$ 

Then 
\begin{equation}\label{pcommuting}\left(\begin{matrix}0 & x & y & z \\x & 0 & \frac{w_z}{\alpha} & -\frac{w_y}{\alpha} 
 \\y & -\frac{w_z}{\alpha} & 0 & \frac{w_x}{\alpha} \\z & \frac{w_y}{\alpha} & -\frac{w_x}{\alpha} & 0\end{matrix}\right)\circ p'^{2i-1}=\left(\begin{array}{cccc}0 & 1 & 0 & 0 \\1 & * & * & * \\0 & * & * & * \\0 & * & * & *\end{array}\right) \circ \left(\begin{matrix} 0 & w_x & w_y & w_z \\ w_x &  0 & -\alpha z & \alpha y \\ w_y & \alpha z & 0 & -\alpha x \\ w_z & -\alpha y & \alpha x & 0\end{matrix}\right).
 \end{equation} On the other hand, recalling (\ref{deg1mfmorph}), the morphism $\{f'^j\}_{j\in \ZZ}$ was induced by the lifting of the map $(1\;\;0\;\;0):R^3 \to R,$ namely 
 \begin{equation*}
 \left(\begin{matrix}0 & x & y & z\end{matrix}\right)\circ f'^{-1}=(1\;\;0\;\;0)\circ \left(\begin{matrix} w_x &  0 & -\alpha z & \alpha y \\ w_y & \alpha z & 0 & -\alpha x \\ w_z & -\alpha y & \alpha x & 0\end{matrix}\right),
 \end{equation*}
$f'^{-2}$ is defined as the successive lifting of $f'^{-1}$ and then the lifting becomes 2-periodic. It is clear that $p'^{-1}$ is also realized by the lift of $(1\;\;0\;\;0):R^3 \to R$, i.e.
\[\left(\begin{matrix}0 & x & y & z\end{matrix}\right)\circ p'^{-1}=(1\;\;0\;\;0)\circ \left(\begin{matrix} w_x &  0 & -\alpha z & \alpha y \\ w_y & \alpha z & 0 & -\alpha x \\ w_z & -\alpha y & \alpha x & 0\end{matrix}\right),\]
so $\{p'^j\}_{j\in \ZZ}$ is the same morphism as that induced in (\ref{deg1mfmorph}). Similarly, $y^*$ induces a morphism \[ q'^{2i}=\left(\begin{array}{cccc}0 & 0 & 1 & 0 \\0 & * & * & * \\1 & * & * & * \\0 &  * & * & *\end{array}\right)\]
and $z^*$ induces
\[ r'^{2i}=\left(\begin{array}{cccc}0 & 0 & 0 & 1 \\0 & * & * & * \\0 & * & * & * \\1 &  * & * & *\end{array}\right)\]
and they also induce same morphisms as those coming from $\CG_0 \circ \CF.$

Finally we construct $\widetilde{\CF}$.
By \cite{CHL1}, the functor $\locmir^\LL_{gr}\circ \CS_0$ is an $\AI$-quasiequivalence between $Fu_0(E)$ and $MF_\ZZ(W)$. By the result of \cite{CT}, the functor $\CG_0$ also extends to an $\AI$-equivalence between $D^b_\infty Coh(X)$ and $MF_\ZZ(W)$. Hence we define an $\AI$-functor $\widetilde{\CF}:Fu_0(E) \to D^b_\infty Coh(X)$ by $\widetilde{\CF}:=\CG_0^{-1} \circ \locmir^\LL_{gr}\circ \CS_0,$ so that it is an $\AI$-quasiequivalence which realizes the Polishchuk-Zaslow's mirror functor $\CF$ on $\calA$. The cohomology functor of $\widetilde{\CF}$ gives an exact functor which gives the CY-CY homological mirror symmetry of $T^2$ of \cite{AS}.

\subsection{Description of $\CS_i$ for any $i\in \ZZ$} 
By definition of $\RR \omega_i$, it is easy to observe the following:
\[ \RR\omega_i(\CO(-i)) \cong \RR\omega_0(\CO)(-i),\;
 \RR\omega_i(\CO(-i+1)) \cong \RR \omega_0(\CO(1))(-i).\]

By $\CF$, $L_{(1,3i)}$ corresponds to $\CO(-i)$, so via $\CG_i \circ \CF$, $L_{(1,3i)}$ corresponds to the matrix factorization $M_0(-i)$ and $L_{(1,3i-3)}$ corresponds to $M_1(-i)$. On the other hand, if $\locmir_{gr}^\LL(L')=M'$, then $\locmir_{gr}^\LL(\tau^{-1}(L'))=M'(-1)$ by definition of $\locmir_{gr}^\LL.$ Recall that $L_0=\CS_0(L_{(1,0)})$ is mapped to $M_0$ via $\locmir_{gr}^\LL$. $\tau^{-3}$ is the rotation by $2\pi$, which corresponds to $[-1]$ in $D^\pi Fu_0(T^2).$ Let $\displaystyle j=\lfloor -\frac{i}{3} \rfloor \big( = \sup_{n \in \ZZ} \{n \leq -\frac{i}{3}\}\big)$ and $-i=3j+d$. Then $d=0$, 1 or 2. Now define a symplectomorphism \[S_i := \tau^{d} \circ t_{(0,1/2)} \circ \left(\begin{array}{cc}1 & 0 \\-3i+2 & 1\end{array}\right). \] The symplectomorphism $t_{(0,1/2)}\circ\left(\begin{array}{cc}1 & 0 \\-3i+2 & 1\end{array}\right)$ maps $L_{(1,3i)}$ to $L_0$ and $L_{(1,3i-3)}$ to $L_1$. When $d\in \{0,1,2\}$, $\tau^d$ can be also considered to be a symplectomorphism. Finally, let \[\CS_i:=[-j]\circ S_i.\] 
\bibliographystyle{amsalpha}

\end{document}